\newtheorem{theorem}{Theorem}[section]
\newtheorem{lemma}[theorem]{Lemma}
\newtheorem{proposition}[theorem]{Proposition}
\newtheoremstyle{claim}% name
  {\topsep}% space above
  {\topsep}% space below
  {}% body font
  {}% indent amount
  {\itshape}% theorem head font
  {.}% punctuation after theorem head
  {.5em}% space after theorem head
  {\thmname{#1}\thmnumber{ #2}\thmnote{ (#3)}}% theorem head spec
\theoremstyle{claim}
\DeclareMathOperator{\Homeo}{Homeo}
\DeclareMathOperator{\Diffeo}{Diff}
\DeclareMathOperator{\MCG}{MCG}
\DeclareMathOperator{\Aut}{Aut}
\DeclareMathOperator{\link}{link}
\DeclareMathOperator{\ann}{Ann}
\DeclareMathOperator{\Q}{\mathcal{Q}^{sep}}
\DeclareMathOperator{\sep}{sep}
\DeclareMathOperator{\im}{im}
\newcommand{\fine}{\mathcal{C}^\dagger}
\newcommand{\onefine}{\mathcal{C}^\dagger_1}
\newcommand{\onetrans}{\mathcal{C}^\dagger_\pitchfork}
\newcommand{\homotop}{\simeq}
\newcommand{\p}[1]{\medskip\noindent\textbf{#1}\textbf{.}}
\newcommand{\calH}{\mathcal{H}}
\newcommand{\pit}[1]{\medskip\noindent\textit{#1}\textit{.}}
\title{Automorphisms of the fine 1-curve graph}
\author{Katherine Williams Booth, Daniel Minahan, and Roberta Shapiro}
\date{}
\begin{document}

\vspace{-2in}
\maketitle

\begin{abstract}
    The fine 1-curve graph of a surface is a graph whose vertices are simple closed curves on the surface and whose edges connect vertices that intersect in at most one point. We show that the automorphism group of the fine 1-curve graph is naturally isomorphic to the homeomorphism group of a closed, orientable surface with genus at least one.
\end{abstract}

\section{Introduction}

Let $S = S_{g}$ be an oriented, connected, closed surface with genus $g$. The \textit{fine 1-curve graph of $S$}, denoted $\onefine(S)$, is a graph whose vertices are simple, closed, essential curves in $S$.  There is an edge between two vertices $u$ and $v$ if $|u \cap v|\leq 1.$  Since homeomorphisms preserve intersections, there is a natural homomorphism $\Homeo(S) \rightarrow \Aut\onefine(S)$ induced by the standard action of $\Homeo(S)$ on $S$.
Our main result is the following.

\begin{theorem}\label{maintheorem}
    Let $S_g$ be a closed, orientable, connected surface with $g \geq 1$. The map 
    \begin{displaymath}
    \Phi:\Homeo(S_g) \rightarrow \Aut\onefine(S_g)
    \end{displaymath}
    \noindent is an isomorphism.
\end{theorem}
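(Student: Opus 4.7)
\medskip

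\noindent
The plan is to establish the injectivity and surjectivity of $\Phi$ separately, with surjectivity being by far the more substantive step. For injectivity I would argue that any homeomorphism $h \in \Homeo(S_g)$ fixing every essential simple closed curve setwise must be the identity. Given any point $p \in S_g$, since $g \geq 1$ one can choose two essential simple closed curves through $p$ meeting transversely only at $p$; both being preserved by $h$ forces $h(p)$ to lie in their single intersection point, so $h(p) = p$, and $h = \id$ follows.

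For surjectivity, let $\phi \in \Aut\onefine(S_g)$. My strategy is to show that $\phi$ preserves the stricter relation of disjointness, so that $\phi$ descends to an automorphism of the fine curve graph $\fine(S_g)$, whose edges correspond only to disjoint pairs of curves. Once this reduction is in hand, it should suffice to invoke the known isomorphism $\Aut\fine(S_g) \cong \Homeo(S_g)$ of Long--Margalit--Pham--Verberne--Yao: the resulting homeomorphism $h$ automatically preserves the "intersect in one point" edges as well, since homeomorphisms respect geometric intersection numbers, and hence $\Phi(h) = \phi$ in $\Aut\onefine(S_g)$.

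The central obstacle is to distinguish, purely graph-theoretically inside $\onefine(S_g)$, the two flavors of edge: $|\alpha \cap \beta| = 0$ versus $|\alpha \cap \beta| = 1$. These two configurations are topologically very different, since a regular neighborhood of $\alpha \cup \beta$ is a pair of disjoint annuli in the first case and a genus-one surface with a single boundary component in the second. I expect this difference to be detectable in the structure of the common link $N(\alpha) \cap N(\beta)$ in $\onefine(S_g)$: the boundary components of the regular neighborhood produce distinct numbers and configurations of nearby curves, which should yield different patterns of triangles, cliques, or cycles in the induced subgraph on the common neighbors. Pinning down such a combinatorial characterization -- and then, if necessary, refining it to cover the corner cases of small genus or separating versus nonseparating curves -- is where the bulk of the effort is likely to go.
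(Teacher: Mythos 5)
Your high-level architecture for $g \geq 2$ matches the paper's: show that an automorphism of $\onefine(S_g)$ preserves the disjointness edges, descend to $\fine(S_g)$, and invoke Long--Margalit--Pham--Verberne--Yao; your injectivity argument is also sound (the paper packages it differently, by exhibiting a two-sided inverse of $\Phi$, but the content is the same). However, what you call the ``central obstacle'' is in fact essentially the entire content of the paper, and your proposal does not resolve it --- it restates it. Two concrete problems. First, your topological picture of a one-point intersection is incomplete: a regular neighborhood of $\alpha \cup \beta$ is a one-holed torus only when the intersection is topologically transverse (a \emph{torus pair}); when the curves touch without crossing (a \emph{pants pair}) the neighborhood is a pair of pants or a pinched annulus, and the two curves may even be homotopic. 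These two cases require genuinely different detection mechanisms: the paper characterizes torus pairs by showing that the quotient $\Q(u,v)$ of the separating part of $\link(u,v)$ by homotopy is a cone (Lemma~\ref{toruspairqsepcharlemma}), and detects pants pairs by a multi-step argument that builds annular neighborhoods of $u$ out of homotopic adjacent curves (Section~\ref{pantspairssection}). All of this rests on first proving that automorphisms recognize separating curves (link is a 2-join), homotopy of adjacent separating curves (3-join with an all-separating part), joint separation, and containment in an annulus --- none of which appears in your proposal. ``Different patterns of triangles, cliques, or cycles'' is the problem statement, not a proof sketch; some explicit combinatorial characterization must be exhibited and verified in both directions.

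Second, your reduction fails outright for $g=1$. The machinery above is built on separating curves, and a torus carries no essential separating curves; moreover, the correct reduction target for the torus is not the disjointness graph but Le~Roux--Wolff's graph $\onetrans(T)$, whose edges are disjoint pairs together with transverse one-point intersections. The paper therefore runs a separate argument for the torus (Section~\ref{torussection}): it adapts Le~Roux--Wolff's necklace-clique characterization to show torus pairs are preserved, and then distinguishes pants pairs from disjoint pairs via the join structure of the link induced by curves homotopic to $u$ and $v$. Your closing remark about ``corner cases of small genus'' substantially underestimates this: the torus is not a corner case of the $g\geq 2$ argument but a different proof with a different target theorem.
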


A version of the fine 1-curve graph, denoted $\mathcal{C}^\dagger_\pitchfork(S),$ was introduced by Le~Roux--Wolff \cite{transaut}. In their paper, they work with connected, nonspherical and possibly nonorientable or noncompact surfaces. The vertices of their graph correspond to nonseparating curves while edges connect pairs of curves that are either disjoint or intersect once essentially (termed torus pairs below). Le~Roux--Wolff show that $\Aut \mathcal{C}^\dagger_\pitchfork(S)$ is isomorphic to $\Homeo(S)$. %Their work is of particular interest because it addresses the case of genus 1 surfaces. 

\p{The fine curve graph} The \textit{fine curve graph} of $S,$ denoted $\fine(S)$, was introduced by Bowden--Hensel--Webb to study $\Diffeo_0(S)$ \cite{BHW}. The vertices of $\fine(S)$ are essential, non-peripheral, simple, closed curves in $S$.  Two vertices are connected by an edge if their corresponding curves are disjoint. In analogy with our theorem, Long--Margalit--Pham--Verberne--Yao prove that $\Aut \fine(S)$ is isomorphic to $\Homeo(S)$ \cite{fineaut}.

\p{Graphs of curves and Ivanov's metaconjecture} A classically studied object is the \textit{curve graph} of $S$, denoted $\mathcal{C}(S)$.  The vertices of $\mathcal{C}(S)$ are isotopy classes of essential, non-peripheral (if $S$ has boundary), simple closed curves in $S$.  Two vertices are connected by an edge if they admit disjoint representatives. The \textit{extended mapping class group} of $S,$ denoted $\MCG^{\pm}(S),$ is the group of connected components of $\pi_0
(\Homeo(S))$. Ivanov showed that, for surfaces of genus at least three, $\Aut\mathcal{C}(S)$ is isomorphic to $\MCG^\pm(S)$ \cite{ivanov}.  Following this, Ivanov made the following metaconjecture \cite[pg 84]{Farbproblems}.

\pit{Ivanov's metaconjecture} Every object naturally associated to a surface S and having a sufficiently rich structure has $\MCG^{\pm}(S)$ as its groups of automorphisms. Moreover, this can be proved by a reduction to the theorem about the automorphisms of $\mathcal{C}(S).$
\color{black}

% \begin{imetaconjecture}
% Every object $X$ associated to a surface $S$ and having sufficiently rich structure has $\Aut(X) \cong \MCG^{\pm}(S)$.
% \end{imetaconjecture} 

\medskip

\noindent Brendle and Margalit showed that Ivanov's metaconjecture holds for a large number of graphs where edges correspond to disjointness \cite[Theorem 1.7]{BrendleMargalit}. 

\p{The $k$-curve graph} Ivanov's metaconjecture may also hold for graphs of curves where edges do not correspond to disjointness. For example, consider the \textit{$k$-curve graph}, $\mathcal{C}_k(S_g),$ which has the same vertices as the curve graph.  Edges connect vertices whose isotopy classes admit representatives that intersect at most $k$ times.  Agrawal--Aougab--Chandran--Loving--Oakley--Shapiro--Xiao \cite{kcurve} showed that when $g$ is sufficiently large with respect to $k$, $\Aut \mathcal{C}_k(S_g)$ is isomorphic to $\MCG^\pm(S_g)$ for any $k \geq 1$. 

Similarly to how the fine curve graph is an analogue of the curve graph, the fine 1-curve graph is an analogue of the $k$-curve graph for $k=1$.

\p{Sketch of the proof of Theorem \ref{maintheorem}} When $g \geq 2$, we prove Theorem~\ref{maintheorem} by reducing to the theorem of Long--Margalit--Pham--Verberne--Yao \cite{fineaut}. In particular, we show that every automorphism of $\onefine(S)$ preserves the set of edges connecting disjoint curves. For $g=1$,  we reduce to the theorem of Le~Roux--Wolff \cite{transaut} and show that the set of edges that are in $\onefine(S)$ but not in $\onetrans(S)$ are preserved by automorphisms. Edges in this set correspond to pairs of curves in a specific configuration; such a pair of curves will be called a pants pair and is defined in the following paragraph. 

\pit{Torus pairs versus pants pairs} There are two types of configurations of pairs of curves that intersect once. If a pair of curves crosses at their point of intersection, we call it a \textit{torus pair}, as on the left side of Figure \ref{torusvspantsfig}. We note that both curves that comprise a torus pair must be nonseparating. Otherwise, if neither curve crosses the other at their point of intersection, we call it a \textit{pants pair}, as on the right hand side of Figure \ref{torusvspantsfig}. These definitions are reminiscent of those in Long--Margalit--Pham--Verberne--Yao \cite{fineaut}, with two key differences: we require all intersections to be single points (called \textit{degenerate} in \cite{fineaut}) and the curves in a pants pair are allowed to be homotopic. 

\begin{figure}[h]
\begin{center}
\includegraphics[width=1.5in]{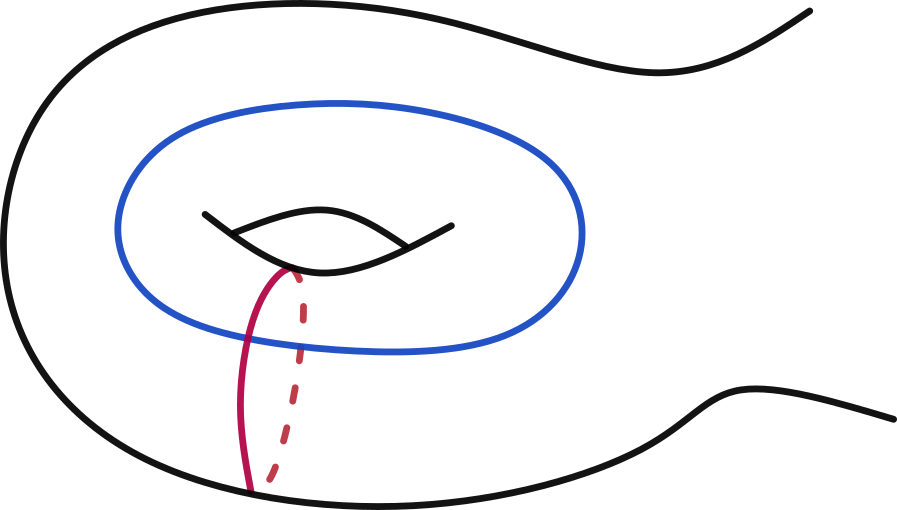}
\hspace{.2in}
\includegraphics[width=1.5in]{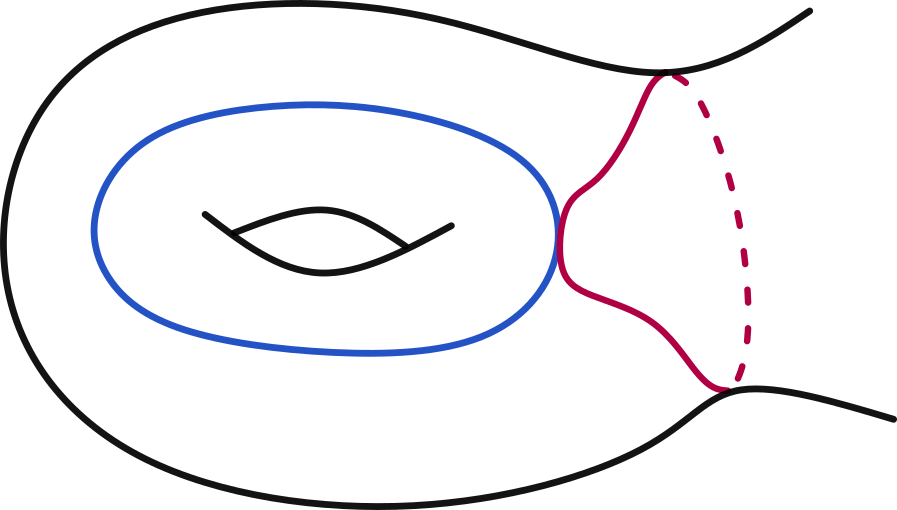}
\caption{Examples of torus pairs (left) and pants pairs (right)}\label{torusvspantsfig}
\end{center}
\end{figure}

\p{Paper outline} In Section~\ref{sec:separatingcurves}, we prove several preliminary results about separating curves. In Section \ref{toruspairssection}, we show that torus pairs are preserved by automorphisms of $\onefine(S)$. In Section~\ref{pantspairssection}, we show that pants pairs are preserved by automorphisms when $g \geq 2$.  In Section \ref{torussection}, we prove Theorem \ref{maintheorem} in the case that $g = 1$.  We conclude by proving Theorem \ref{maintheorem} in Section~\ref{mainsection}.  

\p{Acknowledgments} The authors would like to thank their advisor Dan Margalit for many helpful conversations. The authors would also like to thank Jaden Ai, Ryan Dickmann, Jacob Guynee, Sierra Knavel, and Abdoul Karim Sane for useful discussions. The authors thank F\'ed\'eric Le~Roux and Maxime Wolff for sharing their manuscript and further correspondences. The authors further thank Nick Salter for comments on a draft of the manuscript. The first author was supported by the National Science Foundation under Grant No. DMS-1745583. The third author was partially supported by the National Science Foundation under Grant No. DMS-2203431.

\section{Separating curves and their homotopy classes}\label{sec:separatingcurves}

In this section, we give several prelimininary results about separating curves. We prove in Lemma \ref{separating} that separating curves are preserved by automorphisms of $\onefine(S_g)$. In Lemma~\ref{sepdisjointhom}, we prove that pairs of homotopic separating curves that are adjacent in $\onefine(S_g)$ are preserved by automorphisms of $\onefine(S_g)$. 

We also define a new quotient graph that will be used in future sections. The relation used to define the quotient graph is proven to be equivalent to homotopy of curves in Lemma~\ref{equividhomo}. Moreover, we show that the structure of this quotient graph is preserved by automorphisms of $\onefine(S_g)$ in Lemma~\ref{seplinkaction}.

\p{Preliminary graph theoretic definitions} Two vertices connected by an edge in a graph are called \textit{adjacent}. The \textit{link} of a vertex $v$ in a graph $G$, denoted $\link(v)$, is the subgraph induced by the vertices adjacent to $v$ in $G$. A graph $G$ is a \textit{join} if there is a partition of the vertices of $G$ into at least two nonempty subsets, called \textit{parts}, such that every vertex in one part is adjacent to all vertices in all of the other parts. A graph $G$ is an $n$-\textit{join} if it is a join that can be partitioned into $n$ sets but cannot be partitioned into $n+1$ sets. A graph $G$ is a \textit{cone} if there is a vertex $v,$ called a \textit{cone point}, that is adjacent to all other vertices in $G.$

\medskip\noindent We say that a separating curve $u$ \textit{separates} the curves $a$ and $b$ if $a$ and $b$ lie in the closures of distinct connected components of $S\setminus u.$ If $a$ and $b$ are contained in the closure of the same connected component of $S\setminus u$, then they are on the same side of $u;$ otherwise, $a$ and $b$ are on different sides of $u.$

We begin by showing that the sets of separating and nonseparating curves are each preserved by automorphisms of $\onefine(S_g)$.

\begin{lemma}\label{separating}
    Let $\varphi\in \Aut\onefine(S_g)$. Then $u$ is a separating curve if and only if $\varphi(u)$ is a separating curve. Moreover, $\varphi$ preserves the sides of $u$.%; that is, if $a,b$ lie in the closure of the same connected component of $S\setminus u$, then $\varphi(a)$ and $\varphi(b)$ lie in the the closure of the same component of $S\setminus \varphi(u).$
\end{lemma}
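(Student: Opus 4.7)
The plan is to give a purely combinatorial characterization of separating curves, and of the sides of a separating curve, in terms of the structure of $\onefine(S_g)$; both properties are then automatically preserved by $\varphi$. The case $g = 1$ is vacuous since $S_1$ admits no essential separating curves, so I focus on $g \geq 2$.

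I propose the characterization: $u$ is separating if and only if $\link(u)$ is a join. For the forward direction, suppose $u$ separates $S$ into components $A$ and $B$. For any $v \in \link(u)$, write $v \cap u = \{p\}$ (allowing the empty set). Then $v \setminus \{p\}$ is a connected arc contained in $S \setminus u$, so it lies in a single component; hence $v \subseteq \bar{A}$ or $v \subseteq \bar{B}$. This partitions $\link(u)$ into two sets, both nonempty because essentiality of $u$ forces $A$ and $B$ to have positive genus. If $a$ is on side $A$ and $b$ on side $B$, then $a \cap b \subseteq \bar{A} \cap \bar{B} = u$, so $|a \cap b| \leq |a \cap u| \leq 1$, and $a, b$ are adjacent; hence $\link(u)$ is a join. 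For the reverse direction, if $u$ is nonseparating, I want to show that $\overline{\link(u)}$, the graph with complementary edges, is connected, equivalently that $\link(u)$ is not a join. Using the connectedness and positive genus of $S \setminus u$, given any $a, b \in \link(u)$, the plan is to construct a chain of curves in $\link(u)$ with consecutive terms intersecting in at least two points, connecting $a$ to $b$.

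For the ``moreover'' statement, my proposed characterization is: for separating $u$, two curves $a, b \in \link(u)$ lie on the same side of $u$ if and only if there exists $c \in \link(u)$ with $|a \cap c| \geq 2$ and $|b \cap c| \geq 2$. Backward: such a $c$ lies on a single side of $u$, and if either of $a, b$ were on the opposite side, it would meet $c$ in at most $|u \cap c| \leq 1$ point, contradicting the hypothesis. Forward: with $a, b$ on the same side $A$, I construct $c \subseteq \bar{A}$ meeting each at least twice using the positive genus of $A$. Because this condition is phrased entirely in terms of adjacency in $\onefine(S_g)$, the automorphism $\varphi$ preserves the relation ``on the same side of $u$'', and hence preserves sides.

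The main obstacle will be the reverse direction of the separating characterization, namely showing that $\overline{\link(u)}$ is connected when $u$ is nonseparating. Unlike in the non-fine curve graph, this cannot be reduced to a statement about isotopy classes: one must produce concrete intermediate curves with $|v \cap u| \leq 1$ that bridge potentially awkward pairs, which requires careful surface-topological constructions inside the connected complement $S \setminus u$.
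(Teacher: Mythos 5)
Your proposal is correct and follows essentially the same route as the paper: separating curves are characterized by $\link(u)$ being a join, and the sides of $u$ by membership in the parts of that join (equivalently, your common-non-neighbor relation). The one step you flag as an obstacle---showing $\link(u)$ is not a join when $u$ is nonseparating---is handled in the paper by exactly the chain you describe, realized with a single intermediate vertex: push $a$ off itself and isotope the copy within the connected complement $S_g\setminus u$ so that it meets each of $a$ and $b$ at least twice while still meeting $u$ at most once, producing a common non-neighbor of $a$ and $b$ in $\link(u)$.
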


\begin{proof}
    We will show that a curve $u$ is separating if and only if $\link(u)$ is a join. In fact, we will show that it is a 2-join.

    Suppose $u$ is separating. Then no curve in $\link(u)$ can form a torus pair with $u$ and must be either disjoint or form a pants pair with $u$. Therefore, every curve in $\link(u)$ will lie in the closure of exactly one of the components of $S_g\setminus u$. Let $A$ and $B$ be the closures of the two components of $S_g\setminus u.$ 
    
    We claim that $\link(u)$ is a join with two parts are given by curves contained in $A$ and the curves contained in $B.$ Suppose $a$ and $b$ are curves in $\link(u)$ such that $a\subset A$ and $b\subset B.$ Since $|a\cap u|\leq 1$ and $|b\cap u|\leq 1,$ and $(a\cap b) \subset u,$ we have that $|a\cap b|\leq 1.$ We conclude that $a$ and $b$ are adjacent in $\onefine(S_g),$ thus concluding the proof of the claim. 

    Suppose $u$ is not separating. Then, $S_g\setminus u$ is a single connected component. Let $a,b\in \link(u).$ Then, we can move $a$ off of itself and isotope it to intersect $a$ and $b$ at least twice each and $u$ at most once; call this new curve $a'.$ It follows that $a'$ is adjacent to neither $a$ nor $b$, so $a$ and $b$ cannot be in different parts of a join. We conclude that $\link(u)$ cannot be a join.
    %By considering $\varphi^{-1},$ we obtain the converse.
    
    If two curves $a,b\in\link(u)$ lie in the closure of the same component of $S_g\setminus u$, they must lie in the same part of the join $\link(u)$. It follows that $\link(u)$ is a 2-join. Thus, $\varphi(a)$ and $\varphi(b)$ are in the same part of the join $\link(\varphi(u))$ and therefore lie in the closure of the same component of $S\setminus\varphi(u).$
\end{proof}

%\p{Link of a subgraph and the separating link} The \textit{link of a subgraph} induced by vertices $\{u_1,\ldots,u_n\},$ defined by $\link(u_1,\ldots, u_n)=\bigcap_i\link(u_i)$, is the subgraph of $\onefine(S)$ induced by the set of vertices adjacent to all of the $u_i$. If $n=2$ and $u_1$ and $u_2$ are neighbors, we call the link of $\{u_1,u_2\}$ the \textit{link of an edge}.

\p{Link of an edge and the separating link} Let $u$ and $v$ be adjacent vertices in $\onefine(S).$ The \textit{link of an edge} spanned by $u,v$ is $\link(u,v)=\link(u)\cap \link(v).$ The \textit{separating link of} $(u,v)$, denoted $\link^{\text{sep}}(u,v),$ is the subgraph of $\link(u,v)$ induced by separating curves.

%The \textit{separating link of} $(u_1,\ldots,u_n)$, denoted $\link^{\text{sep}}(u_1,\ldots,u_n)$, is the subgraph of the graph $\link(u_1,\ldots,u_n)$ induced by separating curves. 

We use these definitions to show that homotopic pairs of separating curves adjacent in $\onefine(S_g)$ are preserved by automorphisms of $\onefine(S_g).$

\begin{lemma}\label{sepdisjointhom}
    Let $u$ and $v$ be adjacent separating curves in $\onefine(S_g)$. Then $u$ and $v$ are homotopic if and only if $\link(u,v)$ is a 3-join such that one of the parts contains only separating curves.  Hence, for any $\varphi \in \Aut\onefine(S)$, a separating curve $u \in \onefine(S)$ is homotopic to an adjacent curve $v \in \onefine(S)$ if and only if $\varphi(u)$ is homotopic to $\varphi(v)$.
\end{lemma}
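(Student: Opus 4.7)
The plan is to use Lemma~\ref{separating} to analyze $\link(u,v)$ in terms of the topology of $S_g \setminus (u\cup v)$. Since $u$ and $v$ are separating and $|u\cap v|\leq 1$, they are either disjoint or form a pants pair, because two separating curves cannot meet transversely in exactly one point. In either configuration, $u\cup v$ divides $S_g$ into three open regions: two ``outer'' regions $A'$ and $B$, bounded respectively by $v$ alone and $u$ alone, and a ``middle'' region $R$ whose closure is bounded by $u\cup v$. By Lemma~\ref{separating}, each $c\in\link(u,v)$ lies on a single side of $u$ and a single side of $v$; the four side-combinations reduce to three nonempty ones, placing $c$ in the closure of exactly one of $A',R,B$.

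For the forward direction, I would assume $u$ and $v$ are homotopic. Homotopic separating curves cobound an annular region, so $R$ is an annulus (in the disjoint case) or a pinched annulus (in the pants pair case). The candidate parts $\overline{A'}, \overline{R}, \overline{B}$ form a join: for $a\in\overline{A'}$ and $r\in\overline{R}$, the two can intersect only along their common boundary $v$, and both meet $v$ in at most one point, so $|a\cap r|\leq 1$; the other cross-part pairs are handled identically. To see that this is exactly a 3-join, I would exhibit inside each part two curves with intersection number at least two: two core circles of $R$ perturbed to meet twice, and pairs of nonseparating curves inside each of $\overline{A'}$ and $\overline{B}$ (which have positive genus since $u$ and $v$ are essential). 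The interior of $R$ is an open annulus, so every essential simple closed curve contained in $\overline{R}$ is isotopic to the core and therefore separating, yielding the all-separating middle part.

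For the reverse direction, I argue the contrapositive. If $u$ is not homotopic to $v$, then $v$ lies on a single side $A$ of $u$ and separates $A$ into pieces $A_1$ (with $\partial A_1 = v$) and $A_2$ (with $\partial A_2 = u\cup v$). Because $v$ is essential, $A_1$ is not a disk and so has genus at least one; because $u$ and $v$ are not homotopic, $A_2$ is not an annulus and so also has genus at least one; and $B$ has genus at least one since $u$ is essential. Each of $\overline{A_1}, \overline{A_2}, \overline{B}$ therefore admits a nonseparating simple closed curve disjoint from $u\cup v$, which lies in $\link(u,v)$. One then verifies that this topological partition agrees with the decomposition of $\link(u,v)$ into its maximal join parts (which is unique as the set of connected components of the complement graph), so every part of a 3-join structure on $\link(u,v)$ contains a nonseparating curve. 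This contradicts the assumption that some part is all-separating.

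The ``hence'' clause then follows from the equivalence together with Lemma~\ref{separating}: an automorphism $\varphi\in\Aut\onefine(S)$ preserves separating curves and, as a graph automorphism, preserves the $n$-join structure of links, so the characterization of homotopy for adjacent separating pairs is invariant under $\varphi$. The main obstacle I foresee is handling the pants-pair case carefully: verifying that the pinched middle region admits only core-isotopic essential simple closed curves (so that arc-loops through the tangent point fail to be essential in $S_g$ when $u$ and $v$ are homotopic) and that in the non-homotopic pants-pair case the complementary regions still have positive genus.
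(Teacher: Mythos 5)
Your proposal is correct and follows essentially the same route as the paper: decompose $S_g\setminus(u\cup v)$ into three components, show $\link(u,v)$ is a 3-join with parts given by those components, and distinguish the homotopic case (one component is a possibly pinched annulus, whose essential curves are all separating) from the non-homotopic case (all three components have genus and hence carry nonseparating curves). Your extra care about identifying the topological partition with the maximal join decomposition is a reasonable filling-in of the paper's ``by a similar argument to Lemma~\ref{separating}.''
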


\begin{proof}
    Because $u$ and $v$ are both separating and are either disjoint or form a pants pair, $S\setminus(u\cup v)$ must have three connected components. By a similar argument to Lemma~\ref{separating},  $\link(u,v)$ is a 3-join induced by said components.  Moreover, the link of a pair of adjacent nonseparating curves or a nonseparating and separating adjacent pair of curves is not a 3-join. This follows directly using the same argument that the link of a nonseparating curve is not a join and from the fact that such pairs will not split a surface into three connected components. 

    If $u$ and $v$ are homotopic as in Figure \ref{homosepfig}, then one of the components of $S\setminus(u\cup v)$ is a (possibly pinched) annulus. All essential curves that lie in the (possibly pinched) annulus are separating, so it follows that one of the parts is comprised of separating curves.
    
    \begin{figure}[h]
\begin{center}
\begin{tikzpicture}
    \node[anchor=south west, inner sep = 0] at (0,0){\includegraphics[width=3in]{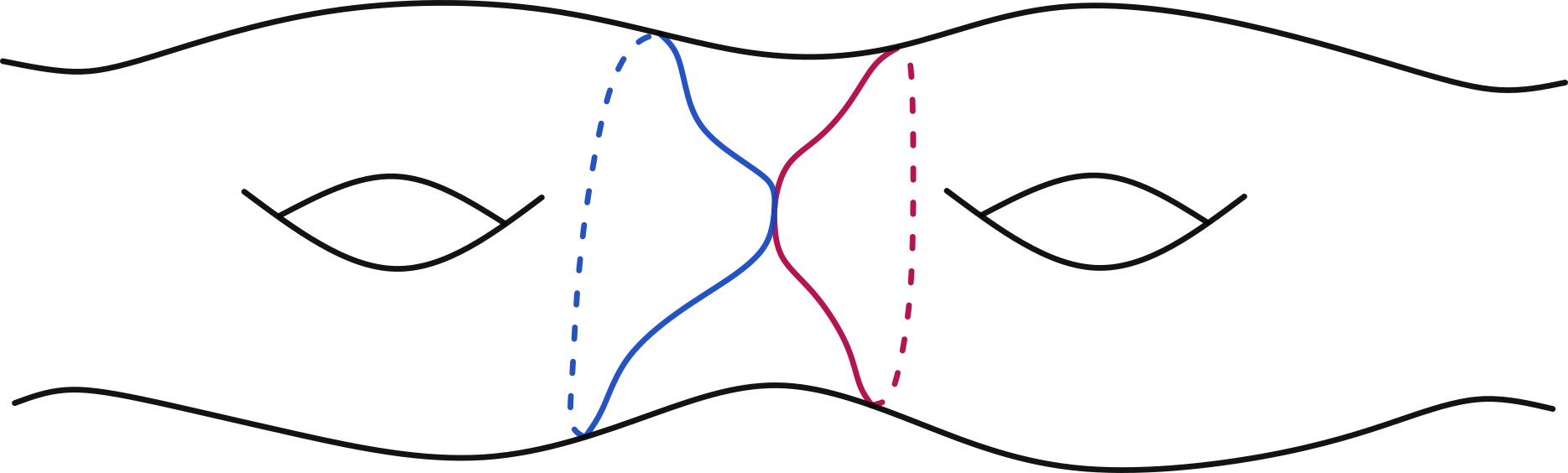}};
    \node at (3,1) {$u$};
    \node at (4.6,1) {$v$};
\end{tikzpicture}
    \caption{Adjacent, homotopic separating curves}\label{homosepfig}
\end{center}
\end{figure}
    If $u$ and $v$ are not homotopic as in Figure \ref{nonhomosepfig}, then $S\setminus(u\cup v)$ consists of three components, each of which has genus. Therefore, each component supports a nonseparating curve. It follows that all parts of the join contain a nonseparating curve. 
    \begin{figure}[h]
\begin{center}
\begin{tikzpicture}
    \node[anchor = south west, inner sep = 0] at (0,0){\includegraphics[width=3in]{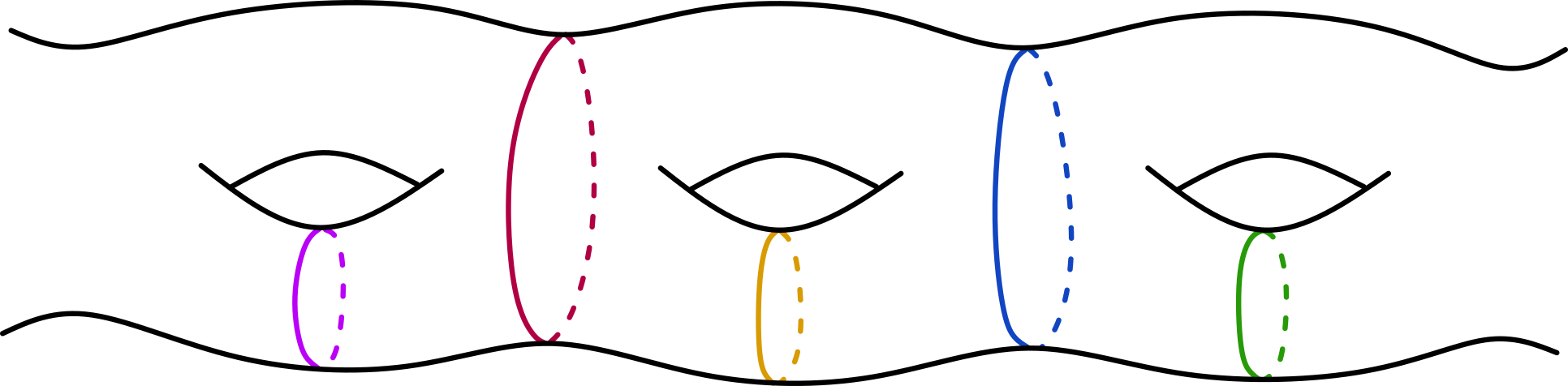}};
    \node at (2.3,0.7) {$u$};
    \node at (4.7,0.7) {$v$};
    \end{tikzpicture}
    \caption{The link of adjacent, non-homotopic separating curves is a 3-join where each part contains a nonseparating curve.}\label{nonhomosepfig}
\end{center}
\end{figure}
\end{proof}

With this in mind, we prove the following result about homotopic curves in the link of an edge. The original idea behind the proof is in Bowden--Hensel--Webb \cite{BHW} and is expanded on in Long--Margalit--Pham--Verberne--Yao \cite{fineaut}.

\begin{lemma}\label{equividhomo}
Let $u,v$ be two nonseparating adjacent curves in $\onefine(S)$.  Let $a$ and $b$ be two separating curves in $\link(u,v)$.  Then $a$ and $b$ are homotopic if and only if there is a path from $a$ to $b$ in $\link(u,v)$ consisting of curves homotopic to $a$ and $b$.
\end{lemma}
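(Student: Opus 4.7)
The ($\Leftarrow$) direction is immediate from transitivity of homotopy: along any such path, all curves are pairwise homotopic. For ($\Rightarrow$), suppose $a$ and $b$ are homotopic separating curves in $\link(u,v)$. My strategy is to produce a single separating curve $c$ that is homotopic to $a$ and $b$, disjoint from both, and lies in $\link(u,v)$; then $a \to c \to b$ is the required path.

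I first handle the case $a \cap b = \emptyset$, where $a \cup b$ cobounds an embedded annulus $A \subset S$. Since $a$ and $b$ are separating but $u$ and $v$ are nonseparating, neither $u$ nor $v$ can be contained in $A$ (otherwise it would be homotopic to a core of $A$, hence separating). Together with the hypotheses $|u \cap a|, |u \cap b| \le 1$, this forces $u \cap A$ to be either empty or a single properly embedded arc from $a$ to $b$; analogously for $v$. By a direct argument in the annulus---for instance, cutting $A$ along one arc to obtain a disk and then locating the second arc inside---I choose an essential simple closed curve $c \subset A$ isotopic to the core that meets each of these at-most-two arcs in at most one point. Such a $c$ is disjoint from $a$ and $b$, homotopic to them, and lies in $\link(u,v)$, as required.

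For the general case $a \cap b \ne \emptyset$, I induct on $|a \cap b|$, seeking a separating curve $a'$ homotopic to $a$, adjacent to $a$ in $\link(u,v)$, and with $|a' \cap b| < |a \cap b|$; concatenating the edge $a \to a'$ with the inductively supplied path from $a'$ to $b$ yields the desired one. Since $a$ and $b$ are homotopic simple closed curves with nonzero geometric intersection, they cobound an innermost bigon $B$, and pushing the $a$-side of $\partial B$ across $B$ produces a curve $a'$ disjoint from and homotopic to $a$ with $|a' \cap b| = |a \cap b| - 2$. The verification that $a' \in \link(u,v)$ uses that $u$ has at most two endpoints on $\partial B$ (since $|u \cap a|, |u \cap b| \le 1$), so a short case analysis on how the arcs $u \cap B$ and $v \cap B$ enter and exit the bigon shows that the push-off can be arranged to preserve the bounds $|a' \cap u|, |a' \cap v| \le 1$.

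The main obstacle is precisely this last verification: that bigon-removal surgery can be carried out while staying in $\link(u,v)$. The sharp bounds on the intersection numbers of $u$ and $v$ with $a$ and $b$ are what make the case analysis on the bigon $B$ finite and tractable; without them, a priori $u$ or $v$ could enter $B$ arbitrarily many times and one could not guarantee that the surgered curve satisfies the $\onefine$-adjacency condition.
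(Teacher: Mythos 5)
Your backward direction is fine, and your disjoint case is broadly in the spirit of the paper's argument (it, too, reduces to producing a homotopic curve adjacent to both $a$ and $b$ by controlling, arc by arc, how $u$ and $v$ enter the annulus they cobound). The genuine gap is your second case, the induction on $|a \cap b|$. The vertices of $\onefine(S)$ are arbitrary topological simple closed curves, and $a$ and $b$ are merely two vertices of $\link(u,v)$: nothing in the hypotheses bounds $|a \cap b|$, and in the topological category this intersection need not even be finite (the two curves could share an arc or meet in a Cantor set), so the induction is not well-founded. Even when $a \cap b$ is finite, the innermost-bigon criterion requires transverse intersections; homotopic curves can meet tangentially, in which case no bigon exists and the count does not drop by $2$. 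For instance, two homotopic separating curves forming a pants pair meet in exactly one point and cobound a pinched annulus, not a bigon, so your surgery never applies to them and your induction has no base case of odd parity. There is also no cheap repair by first perturbing $a$ to a transverse representative, because moving $a$ within $\link(u,v)$ through homotopic curves is essentially the statement being proved.

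The paper avoids ever controlling $a \cap b$. It first shows (Lemma~\ref{disjisolemma}) that some homotopy $\psi: S^1 \times I \to S_g$ from $a$ to $b$ can be chosen so that, locally in the time parameter, its image lies in a (possibly pinched) annulus meeting each of $u$ and $v$ at most once; the case division there is by the position of $u$ and $v$ relative to $a$ (same side; opposite sides with $u,v$ disjoint; opposite sides with $u \cap v = \{\delta\}$, in which case $\delta$ necessarily lies on every curve of the homotopy). Compactness of $I$ then yields a finite chain of such annuli, and consecutive curves in the chain are joined by length-two paths through boundary curves of the annuli. To repair your argument you would need to replace the bigon induction with something of this flavor that only ever controls intersections with $u$ and $v$, never with $b$.
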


We need one auxiliary result before proving Lemma~\ref{equividhomo}.

\begin{lemma}\label{disjisolemma}
Let $u,v$ be two nonseparating adjacent curves in $\onefine(S)$.  Let $a$ and $b$ be two separating, homotopic curves in $\link(u,v)$.  Then there is a homotopy $\psi:S^1 \times I \rightarrow S_g$ from $a$ to $b$ such that, for every $t \in I$, there is a closed neighborhood $T \subseteq I$ with $t \in T$ where $\psi(S^1 \times T)$ is contained in a possibly pinched annulus $A_T$ such that $u$ intersects $A_T$ at most once, and $v$ intersects $A_T$ at most once.
\end{lemma}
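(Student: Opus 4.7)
The plan is to realize the desired homotopy as a sweep across a (possibly pinched) annulus $A$ cobounded by $a$ and $b$, with the intersection patterns of $u$ and $v$ with $A$ controlled by the hypothesis $a, b \in \link(u,v)$.

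First, I would establish that $a$ and $b$ cobound an embedded, possibly pinched annulus $A \subseteq S_g$. Since $a$ and $b$ are homotopic essential simple closed curves on a closed surface of positive genus, Epstein's theorem---that homotopic essential simple closed curves on such a surface are isotopic---gives an isotopy between them, the trace of which provides a map $\psi \colon S^1 \times I \to S_g$ whose image is the desired annulus $A$ (pinched along any shared subarcs of $a$ and $b$), with $\psi(\cdot,0) = a$ and $\psi(\cdot,1) = b$.

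Second, I would show that $u \cap A$ consists of at most one component, necessarily an arc from $a$ to $b$. Because $|u \cap a| \leq 1$ and $|u \cap b| \leq 1$, the set $u \cap \partial A$ contains at most two points, so $u \cap A$ has at most one properly embedded arc. A closed-curve component of $u \cap A$ would have to equal all of $u$ (as $u$ is connected and the component has no endpoints on $u$), forcing $u \subseteq A$; but any simple closed curve in the annulus $A$ is either null-homotopic---contradicting $u$ essential, since nonseparating curves are essential---or isotopic to the core of $A$, in which case $u$ would be homotopic to the separating curve $a$, contradicting $u$ nonseparating. The identical argument shows $v \cap A$ has at most one component.

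Third, for each $t \in I$ I would take a closed neighborhood $T \subseteq I$ of $t$ and set $A_T = \psi(S^1 \times T)$, a possibly pinched sub-annulus of $A$. Then $A_T \cap u$ is contained in the single arc $u \cap A$, hence has at most one component, and likewise for $v$.

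The main obstacle I anticipate is rigorously handling the pinched case: when $a$ and $b$ share points or subarcs, the annulus $A$ and its sub-annuli $A_T$ degenerate along pinch loci, and verifying that intersections remain single components requires careful local analysis (in particular, ruling out the non-generic possibility of $u$ passing through a pinch point). I would address this by working with the pinched-annulus structure directly and noting that the sub-arc of $u \cap A$ lying in $A_T$ remains connected even through pinches, since $u \cap A$ was connected to begin with. A related subtlety is tangential intersections of $u$ or $v$ with $a$ or $b$ (arising when they form pants pairs), which I would handle via a brief local argument confirming that a tangential endpoint still contributes a single endpoint to the arc $u \cap A$.
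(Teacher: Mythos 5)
Your overall strategy---sweeping a homotopy across the (possibly pinched) annulus $A$ cobounded by $a$ and $b$---is genuinely different from the paper's proof, which instead argues in three cases according to whether $u$ and $v$ lie on the same side of $a$ and whether they intersect, first replacing $a$ and $b$ by homotopic curves disjoint from $u \cup v$ (or meeting it only at the point $u \cap v$) and then homotoping within the complement. Your route could be completed, but as written it has a genuine gap: the lemma requires that $A_T$ meet $u$ in at most one \emph{point} (this is exactly what is used in the proof of Lemma~\ref{equividhomo}, where the level curves $\psi(S^1 \times \{s_i\})$ must intersect $u$ and $v$ in at most one point each in order to be vertices of $\link(u,v)$), whereas your Steps 2--3 only deliver that $A_T \cap u$ has at most one \emph{component}. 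In particular, your Step 2 explicitly allows $u \cap A$ to be a properly embedded arc running from $a$ to $b$; if that happened, then for $t$ in the interior of $I$ every admissible $A_T$ would meet $u$ in a nondegenerate subarc, and the stated conclusion would fail for your $\psi$. So the proof, as written, does not establish the statement.

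The missing observation is that the arc case cannot occur. Since $a$ is separating and $|u \cap a| \leq 1$, the closed curve $u$ cannot cross $a$ (a single transverse crossing of a separating curve would leave $u$ stranded on the wrong side), so both local branches of $u$ at any point of $u \cap a$ lie on the same side of $a$; and since $u$ is nonseparating it cannot be contained in $A$. Chasing the arcs of $u \setminus (a \cup b)$ then shows $u \cap \interior(A) = \emptyset$, so $u \cap A$ consists of at most two tangential touch-points, one on $a$ and one on $b$, and likewise for $v$. With that in hand your sweep does satisfy the lemma: for interior $t$ one takes $A_T \subseteq \interior(A)$, which misses $u$ and $v$ entirely, and for $t = 0, 1$ a small collar of $a$ or $b$ meets each of $u, v$ in at most the single touch-point on that boundary curve. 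You should also repair the appeal to Epstein's theorem: the trace of an ambient isotopy from $a$ to $b$ need not have embedded annular image, so $\psi$ should be defined directly from a parametrization of the complementary (possibly pinched) annulus rather than as the trace of the isotopy.
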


\begin{proof}
There are three cases to consider.

\pit{Case 1: $u$ and $v$ are on the same side of $a$} Since $a$ and $b$ are isotopic, $u$ and $v$ must be on the same side of $b$ as well.  Choose an annulus $A$ with $a \subseteq \partial A$ such that $A$ is supported on the side of $a$ that does not contain $u$ and $v$.  Then $a$ is homotopic to the other boundary component $a'$ of $A$, and each curve in this homotopy is disjoint from $u$ and $v$ (besides $a$ itself).  Similarly, construct $b'$ adjacent to $b$.  Then $a'$ and $b'$ are disjoint from $u$ and $v$, so there is a homotopy between $a'$ and $b'$ that consists only of vertices of $\link(u,v)$ disjoint from $u$ and $v$.  Let $\psi:S^1 \times I$ be the resulting homotopy from $a$ to $b$.  Then for any $t \in I$, there is an annulus $A_t$ containing $\psi(S^1 \times \{t\})$ that contains $\psi(S^1 \times T)$ for some closed neighborhood of $t$, and that only intersects $u$ and $v$ each in at most one point, namely either the points of intersection of $a$ with $u$ and $v$, or the points of intersection of $b$ with $u$ and $v$, so the lemma holds.

\pit{Case 2: $u$ and $v$ are on opposite sides of $a$, and $u$ and $v$ are disjoint} As in Case 1, it suffices to show that $a$ is homotopic to $a' \in \link(u,v)$ disjoint from $u$ and $v$ via vertices of $\link(u,v)$.  Since $u$ and $v$ are disjoint, we can choose closed annuli $A_u$ and $A_v$ such that $u \subseteq A_u$, $v \subseteq A_v$, and $A_u \cap a$, $A_v \cap a$ are each either connected or empty.  If $A_u \cap a$ is nonempty, let $d_u$ be the unique subinterval of $\partial A_u$ that connects the two points of intersection $a \cap A_u$ to each other, such that the path $A_u \cap a$ is homotopic rel $\partial A_u \cap a$ to $d_u$.  Let $a''$ be the curve given by removing $a \cap A_u$ from $a$ and replacing it with $d_u$.  Construct $d_v$ similarly, and let $a'$ be the resulting curve given by removing $a'' \cap A = a \cap A$ and replacing it with $d_v$.  Then $a$ is homotopic to $a'$ and $a'$ is disjoint from $u$ and $v$.  Repeat this same process for $b$ to get $b'$, and then $a'$ and $b'$  are homotopic through vertices disjoint from $u$ and $v$.  Similarly to Case I, this completes the proof of the Lemma in this case.

\pit{Case 3: $u$ and $v$ are on opposite sides of $a$, and $u$ and $v$ intersect} Let $\delta$ be the unique point of intersection in $u \cap v$.  Since $u$ and $v$ are on opposite sides of $a$, we see that $\delta \in a$.  Now, since $b$ is homotopic to $a$, it must be the case that $u$ and $v$ are on opposite sides of $b$ as well.  Therefore, $\delta \in b$ as well.  Now, we can think of $\delta$ as a marked point and identify $a$, $b$, $u$, and $v$ with arcs $\overline{a},\ \overline{b},\ \overline{u},$ and $\overline{v},$ respectively, based at $\delta$.  We see that $\overline{a}$ and $\overline{b}$ are disjoint from $\overline{u}$ and $\overline{v}$, since as curves $a$ and $b$ only intersect $u$ and $v$ each at most once.  Hence the arcs $\overline{a}$ and $\overline{b}$ are homotopic to each other in the marked surface $(S_g, \delta)$ via a path of arcs disjoint from $\overline{u}$ and $\overline{v}$.  But then this homotopy of arcs is canonically identifies with a homotopy of curves in $\link(u,v)$ that all intersect $u$ and $v$ only at $\delta$.  Let $\psi$ be the resulting homotopy.  For any $t \in I$, there is a pinched annulus $A$ containing $\psi(S^1 \times T)$ for some closed neighborhood $t \in T$, such that $A$ intersects $u$ and $v$ only at $\delta$, so the lemma holds.
 \end{proof}

\begin{proof}[Proof of Lemma \ref{equividhomo}]
The backwards direction follows by definition, so we only need to prove the forwards direction.  Let $\psi:S^1 \times I \rightarrow S_g$ be a homotopy as in Lemma \ref{disjisolemma}.  For each point $t_i \in I$, choose an interval $T_i = [s_i, s'_i] \subseteq I$ with $s_i \neq s'_i$ that contains $t_i$ such that the set $\psi(S^1 \times T_i)$ is contained in a (possibly pinched) annulus $A_i$, such that $A_i$ intersects $u$ and $v$ in at most one point each.  Since $I$ is compact, there is a finite  collection of such intervals $T_1,\ldots, T_n$ whose interiors cover $I$. 
 By restricting each $T_i$, we may assume that $s'_i = s_{i+1}$ for all $0 \leq i < n$.  

Observe that if two separating curves $x,y \subseteq S_g$ are contained in the interior of a (possibly pinched) annulus $A$, then a curve $z\subset \partial A$ is both adjacent and homotopic to $x$ and $y$, forming a path of length two between $x$ and $y.$  For each $T_i$, there are two curves $x_i=\psi(S^1\times \{s_i\})$ and $y_i=\psi(S^1\times \{s'_i\})$ given by the restriction of $\psi$ to each endpoint of $T_i$ such that $x_i$ and $y_i$ both each intersect $u$ and $v$ in at most one point, since we have assumed the same for $A_i$.  Hence each $x_i$ and $y_i$ are vertices in $\link(u,v)$.  Then each $x_i$ is connected by a path of length 2 in $\link(u,v)$ to $y_i$.  We have chosen the $T_i$ in such way that $y_i = x_{i+1}$ for $0 \leq i < n$.  Then $x_0 = a$ and $y_n = b$ by construction, so there is a path from $x_0$ to $y_n$ of length $2n$ in $\link(u,v)$, such that each vertex of this path is homotopic to $a$, so the lemma holds.
\end{proof}

\p{Separating link quotient} Let $u$ and $v$ be adjacent nonseparating curves  in $\onefine(S_g).$ Define the \textit{separating link quotient of $(u,v)$}, denoted $\Q (u,v),$ to be the separating link of $u$ and $v$ quotiented out by homotopy on the vertices. In other words, the vertices of $\Q(u,v)$ are homotopy classes of separating curves in $\link(u,v)$ and an edge connects two vertices if they admit disjoint representatives.  Hereafter, if $a$ is a separating curve in $\onefine(S_g)$, we will denote two things by $[a]$: 1) the set of curves in $\onefine(S_g)$ homotopic to $a$ and 2) the vertex that $a$ represents in $\Q(u,v)$.

In the following lemma, we prove that the structure of the separating link quotient is preserved under automorphisms of $\onefine(S_g).$

\begin{lemma}\label{seplinkaction}
    Let $u$ and $v$ be nonseparating curves adjacent in $\onefine(S_g)$ and let $\varphi \in \Aut\onefine(S_g)$.  Then $\Q(u,v) \cong \Q(\varphi(u), \varphi(v))$.  
\end{lemma}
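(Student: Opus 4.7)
The plan is to show that $\varphi$ descends to a well-defined graph isomorphism $\Q(u,v) \to \Q(\varphi(u),\varphi(v))$. The essential content is proving that $\varphi$ respects homotopy of separating curves within $\link(u,v)$, after which both the bijection on vertices and the bijection on edges are straightforward.

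First, since $\varphi$ is a graph automorphism of $\onefine(S_g)$, it restricts to a bijection between $\link(u,v)$ and $\link(\varphi(u),\varphi(v))$. By Lemma~\ref{separating}, the curves $\varphi(u)$ and $\varphi(v)$ are again nonseparating (so the target quotient is well-defined), and the bijection restricts further to one between $\link^{\text{sep}}(u,v)$ and $\link^{\text{sep}}(\varphi(u),\varphi(v))$.

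The main step is to verify that if $a, a' \in \link^{\text{sep}}(u,v)$ are homotopic, then so are $\varphi(a)$ and $\varphi(a')$. Lemma~\ref{equividhomo} provides a path $a = c_0, c_1, \ldots, c_n = a'$ in $\link(u,v)$ in which every $c_i$ is homotopic to $a$, and is therefore separating. For each consecutive pair, $c_i$ and $c_{i+1}$ are adjacent, homotopic, separating curves, so Lemma~\ref{sepdisjointhom} yields that $\varphi(c_i)$ and $\varphi(c_{i+1})$ are again adjacent, homotopic, separating curves. Chaining these homotopies gives $\varphi(a) \simeq \varphi(a')$. The same argument applied to $\varphi^{-1}$ supplies the converse, so $\varphi$ induces a bijection on the vertex sets of $\Q(u,v)$ and $\Q(\varphi(u),\varphi(v))$.

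Edge preservation is then nearly automatic: an edge between $[a]$ and $[b]$ in $\Q(u,v)$ is certified by a pair of adjacent representatives $a' \in [a]$, $b' \in [b]$ in $\link^{\text{sep}}(u,v)$, and $\varphi$ carries this certificate to one for the corresponding edge in $\Q(\varphi(u),\varphi(v))$; applying $\varphi^{-1}$ gives the reverse implication. The principal obstacle is the well-definedness on vertices, which requires combining Lemma~\ref{equividhomo} (to convert the geometric condition of homotopy into a graph-theoretic path in $\link(u,v)$) with Lemma~\ref{sepdisjointhom} (to transfer each step of that path through $\varphi$).
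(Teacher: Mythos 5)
Your proof is correct and follows essentially the same route as the paper: restrict $\varphi$ to the separating links via Lemma~\ref{separating}, show homotopy classes are preserved, and conclude that the isomorphism descends to the quotients. The only difference is that you explicitly invoke Lemma~\ref{equividhomo} to chain Lemma~\ref{sepdisjointhom} along a path of adjacent homotopic curves, a step the paper's proof leaves implicit (though Lemma~\ref{equividhomo} is proved immediately beforehand for exactly this purpose), so your write-up is if anything slightly more complete.
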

\begin{proof}
    By Lemma \ref{separating}, $\varphi$ induces an isomorphism between $\link^{\sep}(u,v)$ and $ \link^{\sep}(\varphi(u), \varphi(v))$.  Then by Lemma \ref{sepdisjointhom}, $\varphi$ preserves disjoint homotopic separating curves. Hence if $d, d' \in \link^{\sep}(u,v)$ map to the same point in $\Q(u,v)$, then $\varphi(d)$ and $ \varphi(d')$ must map to the same point in $\Q(\varphi(u), \varphi(v))$.  Therefore the isomorphism $\link^{\sep}(u,v) \cong \link^{\sep}(\varphi(u), \varphi(v))$ descends to an isomorphism $\Q(u,v) \cong \Q(\varphi(u), \varphi(v))$.
\end{proof}

With these preliminary results in mind, we are ready to proceed with the main body of the proof of Theorem~\ref{maintheorem}.

\section{Torus pairs}\label{toruspairssection}

In this section, we prove that automorphisms of our graph preserve torus pairs. 
%The goal of this section is to prove Proposition~\ref{toruspairaut}, which states that torus pairs are taken to torus pairs by automorphisms of $\onefine(S)$.  Let $u$ and $v$ be two curves in $\onefine(S)$.  We say that $(u,v)$ is a \textit{torus pair} if $u$ and $v$ are adjacent in $\onefine(S)$ and $u$ and $v$ cannot be isotoped to be disjoint. 

\begin{proposition}\label{toruspairaut}
    Let $S_g$ be a surface of genus $g \geq 2$.  Let $\varphi \in \Aut \onefine(S_g)$ and $u, v$ be adjacent curves in $\onefine(S_g)$.  Then $\left(\varphi(u), \varphi(v)\right)$ is a torus pair if and only if $(u,v)$ is a torus pair.
\end{proposition}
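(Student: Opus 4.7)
By Lemma \ref{separating}, the automorphism $\varphi$ preserves the set of nonseparating curves; since torus pairs consist of nonseparating curves, the pair $(\varphi(u), \varphi(v))$ is an adjacent pair of nonseparating curves, hence one of: disjoint, torus pair, or pants pair with both curves nonseparating. The plan is to characterize torus pairs using the separating link quotient $\Q(u,v)$ (preserved by $\varphi$ via Lemma \ref{seplinkaction}) together with the preservation of sides of separating curves (Lemma \ref{separating}).

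For a torus pair $(u,v)$, the regular neighborhood $N(u\cup v)$ is a one-holed torus $T$ with separating boundary $w$. Then $[w]\in\Q(u,v)$ is a cone point: every separating curve in $\link(u,v)$ can be homotoped into the complement $S_g\setminus T$, and hence to a curve disjoint from $w$; moreover, $u$ and $v$ lie on the same side of $w$. The proposed characterization is: $(u,v)$ is a torus pair if and only if $\Q(u,v)$ admits a cone point $[w]$ with $u,v$ on the same side of $w$ and the side containing $u,v$ being a one-holed torus whose two marked curves $u,v$ are non-isotopic.

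To verify the characterization, I rule out each non-torus configuration in turn. For a disjoint non-homologous pair, one can produce a separating curve in $\link(u,v)$ that separates $u$ from $v$, which is impossible for a torus pair by Lemma \ref{separating}. For a pants pair with both curves nonseparating, the pants neighborhood has three boundary components, at most one of which is separating, and the resulting $\Q(u,v)$ differs in structure from the torus case; for instance, when $g=2$ the quotient $\Q(u,v)$ is empty for pants pairs but a single vertex for torus pairs. For a disjoint homologous pair (or, in higher genus, a pants pair with homotopic curves), $u,v$ cobound a subsurface and $\Q(u,v)$ may superficially resemble that of a torus pair, so a finer invariant is needed.

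The main obstacle is this final case: distinguishing a torus pair from a disjoint homologous pair, both of which yield a cone point whose $u,v$-side is a one-holed torus. The key geometric distinction is that for a torus pair, $u$ and $v$ are non-isotopic curves in this one-holed torus intersecting essentially once, whereas in the exceptional configurations $u$ and $v$ are isotopic inside the torus. Graph-theoretically, this manifests in the number of homotopy classes of simple closed curves lying on the $u,v$-side of the cone point and adjacent to both $u$ and $v$ in $\onefine(S_g)$: only finitely many (indeed, two) for a torus pair, but infinitely many for the exceptional configurations. Turning this cardinality distinction into a bona fide graph-theoretic invariant preserved by $\varphi$ is the crucial step, and I expect it to require a careful analysis of the Farey-graph structure of curves on the one-holed torus side of $w$, together with a homotopy-detection argument in the spirit of Lemma \ref{equividhomo} extended beyond separating curves.
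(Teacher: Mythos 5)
You correctly identify the paper's key object --- the separating link quotient $\Q(u,v)$ and the candidate cone point $[\partial N(u\cup v)]$ for a torus pair --- but the proposal stalls on a difficulty that does not actually exist, and the step you flag as ``crucial'' and leave open is a genuine gap. The paper's characterization (Lemma~\ref{toruspairqsepcharlemma}) is simply: $(u,v)$ is a torus pair if and only if $\Q(u,v)$ is a cone, with no side conditions about which side of the cone point contains $u$ and $v$ or what that side looks like. What you are missing is the converse direction: when $(u,v)$ is not a torus pair --- including the disjoint homologous/homotopic case you worry about --- \emph{no} vertex of $\Q(u,v)$ is a cone point. Given any separating curve $a\in\link(u,v)$, both sides of $a$ have genus, so there is a curve $c$ disjoint from $u\cup v$ with essential intersection with $a$ (an essential arc in each side of $a$ avoiding $u\cup v$ exists precisely because $u\cup v$ does not fill a one-holed torus); a representative of $T_{[c]}[a]$ disjoint from $u\cup v$ is then a vertex of $\Q(u,v)$ not adjacent to $[a]$. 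In particular, the boundary class of a one-holed torus containing a disjoint homotopic pair $u,v$ is \emph{not} a cone point: that torus cannot be shrunk onto $u\cup v$, so its boundary can be twisted off of itself within $\link(u,v)$. This is the asymmetry with the genuine torus-pair case, where $\partial N(u\cup v)$ has representatives in arbitrarily small neighborhoods of $u\cup v$ and hence meets everything that avoids $u\cup v$.

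Because you never test your candidate cone points for adjacency against \emph{all} other vertices of $\Q(u,v)$, you are led to bolt on conditions (``the $u,v$-side is a one-holed torus whose two marked curves are non-isotopic'') that are not graph-theoretic, and then to an unproven Farey-graph counting argument. That completion would in any case need to detect homotopy classes of \emph{nonseparating} curves on the $u,v$-side, which in the paper is only available downstream of this very proposition (Lemma~\ref{nonsephomotaut} uses Proposition~\ref{toruspairaut}), so your route risks circularity. A minor further error: your claim that $\Q(u,v)$ is empty for pants pairs when $g=2$ is false; for instance, a homotopic nonseparating pants pair in $S_2$ admits separating curves in the complement of the pinched annulus it bounds.
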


We will now characterize torus pairs using the graph $Q^{\sep}(u,v)$ in Lemma~\ref{toruspairqsepcharlemma}.

\begin{lemma}\label{toruspairqsepcharlemma}
Let $g \geq 2$.  Let $u$ and $v$ be nonseparating, adjacent curves in $\onefine(S_g)$.  Then $(u,v)$ is a torus pair if and only if the separating link quotient $\Q(u,v)$ is a cone.
\end{lemma}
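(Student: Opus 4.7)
The plan is to identify a distinguished separating curve in the torus-pair case and show it serves as a cone point, and to argue no such cone point exists in the non-torus cases.

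For the forward direction, suppose $(u,v)$ is a torus pair. Let $N$ be a closed regular neighborhood of $u \cup v$, which is a one-holed torus, and let $\beta = \partial N$. Since $g \geq 2$, the complement $S_g \setminus N$ is homeomorphic to $S_{g-1,1}$, so $\beta$ is an essential separating curve and $[\beta] \in \Q(u,v)$. To see $[\beta]$ is a cone point, let $c$ be any separating curve in $\link(u,v)$. Since $u, v$ are nonseparating, $c$ has zero algebraic intersection with each of them; combined with $|c \cap u|, |c \cap v| \leq 1$, any intersection of $c$ with $u$ or $v$ must be tangential rather than transverse. A local push-off at each such tangency produces $c' \simeq c$ disjoint from $u \cup v$, and hence contained in a compact subset of $S_g \setminus (u \cup v)$. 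Choosing $N' \subset N$ to be a regular neighborhood of $u \cup v$ small enough that $\beta' = \partial N'$ is disjoint from $c'$, we obtain $\beta' \simeq \beta$ with $\beta', c' \in \link(u,v)$ and $\beta' \cap c' = \emptyset$. Thus $[\beta]$ is adjacent to $[c]$ in $\Q(u,v)$, so $\Q(u,v)$ is a cone.

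For the reverse direction, suppose $(u,v)$ is not a torus pair. Then $(u,v)$ is either a disjoint pair or a pants pair, and the regular neighborhood $N(u \cup v)$ is correspondingly either a disjoint union of two annuli or a pair of pants, with four or three boundary components respectively (some of which are homotopic to the nonseparating curves $u$ or $v$). I would show no cone point exists in $\Q(u,v)$: for each candidate $[c] \in \Q(u,v)$, the goal is to construct a separating curve $d \in \link(u,v)$ with $[d] \neq [c]$ such that no representative of $[d]$ in $\link(u,v)$ can be made disjoint from any representative of $[c]$ in $\link(u,v)$. The construction of $d$ proceeds by case analysis on the topology of $N(u \cup v)$ and of the complement $S_g \setminus N(u \cup v)$, exploiting that with multiple boundary components of $N(u \cup v)$, no single separating curve plays the canonical enclosing role that $\beta$ plays in the torus-pair case.

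The main obstacle is the reverse direction, which requires the nontrivial construction of the obstructing curve $d$ in each non-torus configuration. The forward direction is a clean push-off argument resting on the uniqueness, up to homotopy, of the enclosing curve $\beta$; the reverse requires more delicate combinatorial analysis of which separating homotopy classes in $\link(u,v)$ admit mutually disjoint representatives, and the key geometric input is that the one-holed-torus neighborhood characterizing the torus-pair case is the only configuration producing a single enclosing separating boundary.
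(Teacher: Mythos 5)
Your forward direction is essentially the paper's argument and is fine: the cone point is the class of $\partial N(u\cup v)$, and the two ingredients you use -- that a separating curve in $\link(u,v)$ meets $u$ and $v$ only non-transversally (zero algebraic intersection plus $|c\cap u|\leq 1$) and hence can be pushed off $u\cup v$, and that a sufficiently small regular neighborhood of $u\cup v$ then has boundary disjoint from the push-off -- are exactly what is needed to produce disjoint representatives in $\link(u,v)$ of $[\beta]$ and $[c]$.

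The reverse direction, however, is a genuine gap: you state the goal (for every $[c]$ produce a class $[d]$ with no disjoint representatives in $\link(u,v)$) and announce a case analysis on the topology of $N(u\cup v)$, but you never construct $d$, and you yourself flag this as ``the main obstacle.'' That construction is the entire content of this direction. The paper does it uniformly, with no case analysis on $N(u\cup v)$: given a separating $a\in\link(u,v)$, each side of $a$ has genus (as $g\geq 2$), so one can find a nonseparating curve $c$ that is disjoint from $u\cup v$ and has essential (non-removable) intersection with $a$; then take $d$ to be a representative of the Dehn twist $T_{[c]}[a]$ chosen disjoint from $u\cup v$. Since $i([a],T_{[c]}[a])\geq 2$, no representative of $[d]$ can be made disjoint from any representative of $[a]$, so $[a]$ is not a cone point; as $[a]$ was arbitrary, $\Q(u,v)$ is not a cone. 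Your proposed strategy of exhibiting, for each configuration of $N(u\cup v)$, some geometric obstruction to a ``single enclosing boundary'' would still leave you to verify non-adjacency at the level of homotopy classes (you must rule out \emph{all} pairs of representatives), which is precisely what the intersection-number lower bound from the Dehn twist delivers; without some such homotopy-invariant obstruction the argument does not close.
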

\begin{proof}
We first assume that $(u,v)$ is a torus pair and show that $Q^{\sep}(u,v)$ is a cone.  We then assume that $(u,v)$ is not a torus pair, and show that $Q^{\sep}(u,v)$ is not a cone.
    
\pit{Suppose $(u,v)$ is a torus pair}  Consider the homotopy class $\calH$ of curves in $\link(u,v)$ homotopic to the boundary of the torus that $u$ and $v$ fill. By Lemma \ref{equividhomo}, $\calH$ descends to a single point in $Q^{\sep}(u,v)$.

    \begin{figure}[h]
\begin{center}
\begin{tikzpicture}
    \node[anchor = south west, inner sep = 0] at (0,0) {\includegraphics[width=2in]{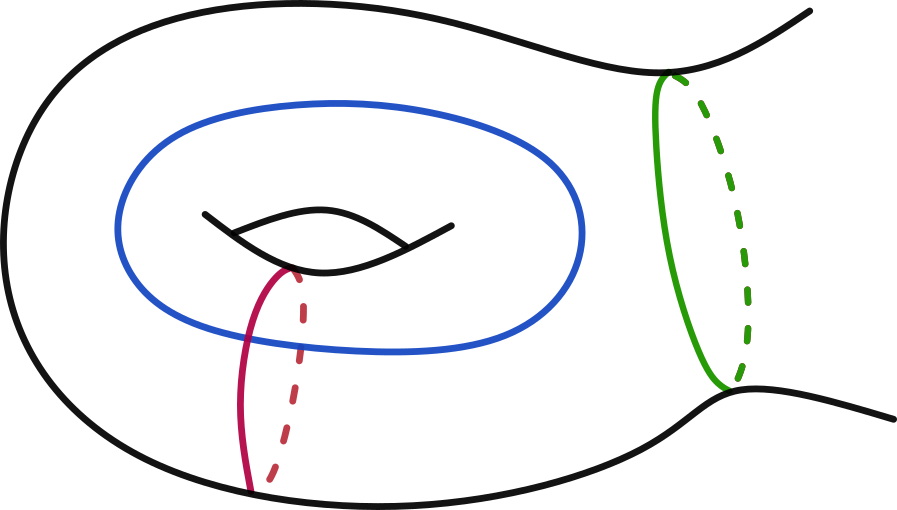}};
    \node at (1.8,0.5) {$u$};
    \node at (2,2.5) {$v$};
    \node at (4.3,2) {$\delta$};
    \end{tikzpicture}
    \caption{A torus pair with the separating curve $\delta$ which represents the cone point in $Q^{\sep}(u,v)$ }\label{toruscone}
\end{center}
\end{figure}  
    
    Let $a \in \link(u,v)\setminus \mathcal{H}$ be a separating curve. Then, $a$ is not contained in the torus filled by $u\cup v,$ and therefore is isotopic to a curve $a'$ disjoint from $u\cup v.$ Let $b$ be the boundary of a regular neighborhood of $u\cup v$ disjoint from $a'$. Thus $b\in\mathcal{H}$ and $b$ is disjoint from---and therefore adjacent to---$a'.$ We therefore have that $\calH$ is a cone point of $Q^{\sep}(u,v)$.

\pit{Suppose $(u,v)$ is not a torus pair} We must show that there is no cone point of $\Q (u,v)$. We will do this by ascertaining that for any homotopy class of separating curves in $\link(u,v),$ there is another homotopy class of curves that minimally intersects the first at least twice.

    Let $a\in\link(u,v)$ be a separating curve. Because $a$ is separating, there must be at least one genus in each connected component of $S_g\setminus a.$ Therefore, there is a nonseparating curve $c$ that: 1) intersects $a$ and cannot be homotoped to be disjoint from $a$, and 2) is disjoint from $u\cup v.$ 
    
    Consider a representative $b$ of $T_{[c]}[a],$ the Dehn twist of $[a]$ about $[c].$ The intersection number of $[a]$ and $[b]$ must be at least 2, so $|a\cap b|\geq 2$. Furthermore, since $c$ and $a$ are disjoint from $u$ and $v$, we can choose $b$ to be disjoint from $u$ and $v$ as well.  Hence, the vertices in $\Q (u,v)$ corresponding to $a$ and $b$ are not adjacent, and thus the vertex corresponding to $a$ is not a cone point.

    We conclude that $\Q (u,v)$ has no cone points.
 \end{proof}

 \begin{proof}[Proof of Proposition \ref{toruspairaut}]
Let $u,v$ be as in the statement of the proposition and let $\varphi \in \Aut\onefine(S_g)$.  Since $\varphi$ is invertible, it suffices to show that $(u,v)$ a torus pair implies $(\varphi(u), \varphi(v))$ a torus pair.  By Lemma \ref{toruspairqsepcharlemma}, $Q^{\sep}(u,v)$ is a cone.  Then by Lemma \ref{seplinkaction}, $Q^{\sep}(\varphi(u), \varphi(v))$ is a cone.  Therefore $(\varphi(u), \varphi(v))$ is a torus pair by Lemma \ref{toruspairqsepcharlemma}.
 \end{proof}

\section{Pants pairs}\label{pantspairssection}
The goal of this section is to prove the following proposition:
%We will say that two adjacent curves $u$ and $v$ are a \textit{pants pair} if $u \cap v \neq \emptyset$ and $u$ and $v$ can be isotoped to be disjoint.  In this section, we prove Proposition~\ref{allpants}, which states that pants pairs and disjoint pairs are preserved by automorphisms of $\onefine(S)$. 

\begin{proposition}\label{allpants}
    Let $S_g$ be a surface of genus $g \geq 2$.  Let $\varphi \in \Aut\onefine(S_g)$ and $u, v$ be adjacent curves in $\onefine(S_g)$.  Then $\left(\varphi(u), \varphi(v)\right)$ is a pants pair if and only if $(u,v)$ is a pants pair.
\end{proposition}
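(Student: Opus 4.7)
The plan is to produce a combinatorial characterization of pants pairs that is preserved by graph automorphisms, paralleling the cone characterization of torus pairs in Lemma~\ref{toruspairqsepcharlemma}. By Proposition~\ref{toruspairaut} any $\varphi \in \Aut\onefine(S_g)$ already sends torus pairs to torus pairs, so it preserves the complementary set of adjacent non-torus pairs, which splits into pants pairs and disjoint pairs. Hence the real content of Proposition~\ref{allpants} is to combinatorially separate pants pairs from disjoint pairs among non-torus adjacent pairs.

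The geometric feature I would exploit is that a pants pair $(u,v)$ carries a unique tangent point $p$, while a disjoint pair has no intersection at all. This gives two complementary combinatorial handles. The first is a ``shared torus-pair witness'': for a pants pair, any curve $w$ passing transversely through $p$ produces torus pairs $(u,w)$ and $(v,w)$ whose intersection points coincide, i.e.\ $u\cap w = v\cap w = \{p\}$, whereas for a disjoint pair $(u,v)$ any $w$ with $(u,w),(v,w)$ torus pairs must meet $u$ and $v$ at distinct points, since $u\cap v = \emptyset$. The second handle is more intrinsic: a regular neighborhood of a pants pair is a pair of pants (three boundary components), while a regular neighborhood of a disjoint pair is a union of two annuli (four boundary components). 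Either handle should translate into a structural invariant one can see through the machinery of Section~\ref{sec:separatingcurves}: the first by studying $\link(u,v)\cap \link(w)$ together with the cone points of $\Q(u,w)$ and $\Q(v,w)$ (a pants pair forces the two cone points to be realized by a common homotopy class of separating curve), the second by comparing the join/cone structure of $\Q(u,v)$ itself in the two cases. My plan is to prove a lemma of the form ``$(u,v)$ is a pants pair iff there exists $w$ with torus-pair structure matching such-and-such combinatorial pattern,'' then apply Proposition~\ref{toruspairaut} and Lemma~\ref{seplinkaction} to push the characterization across $\varphi$. As in Lemma~\ref{toruspairqsepcharlemma}, Dehn twist constructions should provide the obstructing curves needed to rule out the disjoint case.

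The main obstacle will be isolating a single property that works uniformly across every sub-case. Pants pairs come in several flavors: $u$ and $v$ may be homotopic (cobounding a pinched annulus) or non-homotopic, and $u\cup v$ may be separating or nonseparating; disjoint pairs split similarly according to whether $u\cup v$ separates $S_g$. The homotopic pants pair case in particular is likely to require separate treatment because the complement degenerates and the three boundary components of the pants neighborhood include two parallel copies of $u$; this is where the hypothesis $g\geq 2$ should be crucial, since the extra genus provides room to build Dehn-twisted obstructions and separating witnesses that are unavailable on the torus. Handling these sub-cases without losing the clean combinatorial characterization is the heart of the argument.
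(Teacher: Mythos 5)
Your high-level strategy is the paper's: Proposition~\ref{toruspairaut} reduces the problem to separating pants pairs from disjoint pairs, and your second ``handle'' --- that a disjoint pair is recognized by a regular neighborhood of $u$ missing $v$, while a pants pair admits no such neighborhood --- is exactly the observation the paper builds on. But the proposal stops short of the part that carries all the difficulty: making the neighborhood \emph{combinatorially visible}. To say ``there exist $\alpha,\beta$ bounding an annulus around $u$ disjoint from $v$'' in a way an automorphism must respect, you need to certify, purely from the graph, (i) that $\alpha,\beta,u$ are homotopic nonseparating curves (the paper does this by first detecting jointly separating pairs and then showing $u\homotop v$ iff $\Q(u,v)$ fails to be a join, Lemmas~\ref{separatedns} and~\ref{homjointlysep}), (ii) that $\alpha, u,\beta$ are pairwise disjoint rather than tangent (Lemma~\ref{homnonseppants}, itself the homotopic sub-case of the proposition), and (iii) that $u$ actually lies \emph{inside} the annulus bounded by $\alpha$ and $\beta$ (Lemma~\ref{lemma:annularmain}, detected via $\link^{\sep}(\alpha,\beta)\subseteq\link^{\sep}(u)$). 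None of these certificates appear in your plan, and without them the characterization is geometric rather than automorphism-invariant. Note also the logical order: the homotopic pants-pair case must be settled \emph{before} the non-homotopic one, since condition (ii) presupposes it; your sketch treats the homotopic case as a degenerate afterthought rather than as a prerequisite.

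Your first ``handle'' has a concrete error. For a non-homotopic pants pair $(u,v)$ with witness $w$ through the tangent point, the tori filled by $u\cup w$ and by $v\cup w$ are different subsurfaces with non-homotopic boundaries, so the cone points of $\Q(u,w)$ and $\Q(v,w)$ are \emph{not} realized by a common homotopy class; the proposed invariant does not distinguish pants pairs from disjoint pairs. More fundamentally, ``$u\cap w$ and $v\cap w$ coincide as points of $S_g$'' is not a datum of the graph, and converting it into one is precisely the open step. (The paper's version of this idea, Lemma~\ref{nsdisjointhom}, instead asks that $w$ be adjacent to \emph{every} curve in $\ann_{\leq1}(u,v)$ --- which again requires the annulus machinery above.) Finally, the witness approach cannot even be stated when $u$ or $v$ is individually separating, since a separating curve forms no torus pair with anything; the paper disposes of this first (Lemma~\ref{seppantslemma}) by using the join decompositions of $\link(u)$ and $\link(u,v)$ to replace separating curves with nonseparating ones, a reduction your casework (which only sorts pairs by whether $u\cup v$ separates) does not cover.
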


The main observation behind the proof of Proposition~\ref{allpants} is that two curves $u$ and $v$ are disjoint if and only if $u$ has a neighborhood disjoint from $v$. To use this observation, we will break down the proof of Proposition~\ref{allpants} into several steps, as follows.
\begin{enumerate}[leftmargin=3\parindent]
    \item[Step 0.] Reduce to the case that $u$ and $v$ are both nonseparating (Lemma~\ref{seppantslemma}). 
    \item[Step 1.] Distinguish the boundary curves of a neighborhood of a nonseparating curve by showing that automorphisms preserve: 
    \begin{enumerate}
        \item [Step 1.1.] pairs of adjacent homotopic nonseparating curves  $\onefine(S_g)$ (Lemma~\ref{nonsephomotaut}),
        \item [Step 1.2.] whether a nonseparating curve is contained (in some sense) in the annulus bounded by adjacent homotopic curves $a$ and $b$ (Lemma \ref{lemma:annularmain}), and
        \item [Step 1.3.] whether two homotopic nonseparating curves adjacent in $\onefine(S_g)$ form a pants pair or are disjoint (Lemma~\ref{homnonseppants}).
    \end{enumerate}
    \item[Step 2.] Show that automorphisms preserve whether non-homotopic nonseparating curves are disjoint or a pants pair (Lemma~\ref{nonsepnonhompants}).
\end{enumerate}
Combining Steps 0, 1.3, and 2 proves Proposition~\ref{allpants} for all possible arrangements of curves.

We begin by proving Step 0 in the following lemma. We use the notation $A_1{*}A_2{*}\cdots{*}A_k$ to denote the decompositon of a $k$-join into its parts. 

\begin{lemma}\label{seppantslemma}
Let $u$ and $v$ be adjacent curves in $\onefine(S)$. Suppose that $u$ is separating. 

\begin{enumerate}[label=(\arabic*)]
    \item \textbf{If $\boldmath{v}$ is nonseparating:} Let $A {*} B$ be the decomposition of $\link(u)$ into a join and let $v\in A.$ Then $(u,v)$ is a pants pair if and only if there exists a nonseparating curve $w\in B$ such that $(v,w)$ is a pants pair.
    \item \textbf{If $\boldmath{v}$ is separating and homotopic to $\boldmath{u}$:} Let $A{*}B{*}C$ be a decomposition of $\link(u,v)$ into a 3-join such that $B$ contains only separating curves. Then, $(u,v)$ is a pants pair if and only if there exist nonseparating curves $a\in A$ and $c\in C$ such that $(a,c)$ is a pants pair. 
    \item \textbf{If $\boldmath{v}$ is separating and not homotopic to $\boldmath{u}$:} Let $A{*}B{*}C$ be a decomposition of $\link(u,v)$ into a 3-join. Then, $(u,v)$ is a pants pair if and only if there exist nonseparating curves $a\in A,\ b\in B,$ and $c\in C$ such that $(a,b),$ $(b,c),$ and $(c,a)$ are all pants pairs.
\end{enumerate}
\noindent In particular, if $\varphi \in \Aut \onefine(S)$ and $\varphi$ sends pants pairs consisting of nonseparating curves to pants pairs consisting of nonseparating curves, then $\varphi$ sends any pants pair to a pants pair.
\end{lemma}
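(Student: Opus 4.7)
The plan is to use a common topological strategy across all three cases: identify the join parts $A$, $B$, and (when present) $C$ with curves supported in the closures of the components of $S \setminus u$ or $S \setminus (u \cup v)$---an identification that is implicit in the proofs of Lemmas~\ref{separating} and~\ref{sepdisjointhom}---and then handle the forward direction by constructing witness nonseparating curves through the tangent point of the pants pair, and the backward direction by a topological disjointness argument that rules out any witness.

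For the forward direction, I suppose $(u,v)$ is a pants pair touching tangentially at a point $p$. In all three cases each of the components corresponding to $A$, $B$, and $C$ has $p$ in its closure: in case (2) this is because the annular component bounded by $u$ and $v$ is pinched at $p$, and in case (3) it follows from the local tangency configuration at $p$. I construct each witness nonseparating curve to pass through $p$ with tangent direction matching that of $u$ (equivalently $v$) at $p$; the resulting pairs meet only at $p$ and are tangent there, hence form pants pairs. The existence of a nonseparating such curve in each relevant component uses that the component has positive genus: for case (1) this holds because both sides of an essential separating curve in $S_g$ with $g \geq 2$ have positive genus, and for case (3) the proof of Lemma~\ref{sepdisjointhom} already noted that all three components are genus pieces.

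For the backward direction, the adjacency $|u \cap v| \leq 1$ combined with the observation that a separating curve cannot participate in a torus pair forces $(u,v)$ to be disjoint whenever it fails to be a pants pair. In each of the three cases, the ``outer'' join parts $A$ (and $C$, when present) consist of curves contained in closures of components bounded only by $u$ (respectively only by $v$); a pair $(v,w)$ in case (1) or $(a,c)$ in cases (2) and (3) can therefore only meet along $u \cap v$. When $u \cap v = \emptyset$ such a pair is disjoint, not a pants pair, ruling out the desired witness.

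The \emph{in particular} clause then follows by assembling the preservation properties we already know: by Lemma~\ref{separating}, $\varphi$ preserves the separating/nonseparating dichotomy and the 2-join structure of $\link(u)$, and by Lemma~\ref{sepdisjointhom} it preserves the 3-join of $\link(u,v)$ (with its distinguished separating part for homotopic separating $u,v$). Combined with the hypothesis that $\varphi$ sends nonseparating pants pairs to nonseparating pants pairs, each of the three characterizations (1)--(3) is preserved by $\varphi$, yielding the conclusion. The main obstacle is the forward construction: each witness must simultaneously be simple and nonseparating in $S$, lie in the prescribed component meeting $u$ and $v$ only at $p$, and have the correct tangent direction at $p$ to form the desired pants pair---this is the only step where positive genus of each component is essential.
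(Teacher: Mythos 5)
Your proposal matches the paper's proof in all essentials: in each case the forward direction is handled by constructing nonseparating witness curves through the tangent point $u \cap v$, lying in the appropriate join parts (which is possible because each relevant complementary component has positive genus), and the backward direction by observing that when $u$ and $v$ are disjoint the relevant join parts lie in components whose closures meet in at most $u\cap v = \emptyset$, so no witness pair can form a pants pair. Your explicit assembly of the ``in particular'' clause from Lemmas~\ref{separating} and~\ref{sepdisjointhom} together with the hypothesis on nonseparating pants pairs is exactly how the paper intends that conclusion to follow.
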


\begin{proof}
    \pit{The case that $v$ is nonseparating} Suppose $(u,v)$ is a pants pair. Let $w\in B$ be a nonseparating curve that intersects $u$ at the point $u \cap v$. On the other hand, suppose $u$ and $v$ are disjoint. Then any curve $w \in B$ must be disjoint from $v$.
    
    \pit{The case that $v$ is separating and homotopic to  $u$} Suppose $(u,v)$ is a pants pair. Then, we can choose nonseparating curves $a\in A$ and $c\in C$ that intersect $u$ and $v$ at $u\cap v,$ as in Figure~\ref{seppantsmorefig}.
    
    Suppose $u$ and $v$ are disjoint. Then any curve in $A$ is disjoint from any curve in $C$. 
    
    \begin{figure}[h]
        \centering
        \begin{tikzpicture}
        \node[anchor = south west, inner sep = 0] at (0,0){\includegraphics[width=3in]{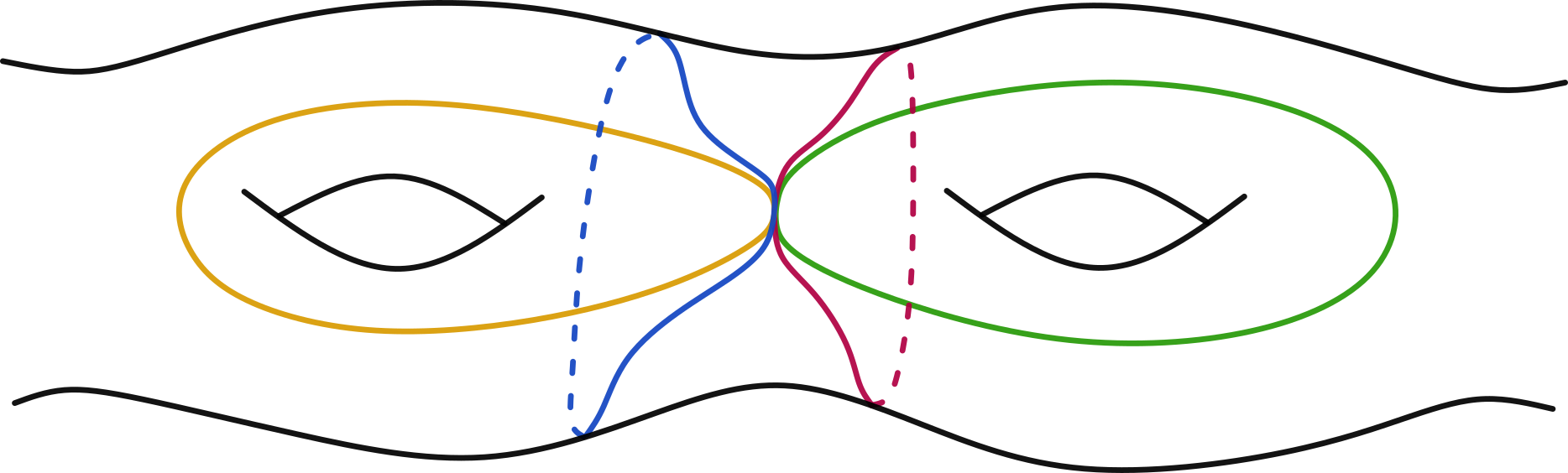}};
        \node at (2.8,2) {$u$};
        \node at (4,1.9) {$v$};
        \node at (0.7, 1.2) {$a$};
        \node at (7, 1.2) {$c$};
        \end{tikzpicture}
        \caption{If $u$ and $v$ form a pants pair, we can find nonseparating curves $a\in A$ and $c\in C$ that form a pants pair.}
        \label{seppantsmorefig}
    \end{figure}
    
    \pit{The case that $v$ is separating and not homotopic to $u$} Suppose $(u,v)$ is a pants pair. Then we can choose nonseparating curves $a\in A$, $b\in B,$ and $c\in C$ such that they pairwise form pants pairs. An example of such a selection of curves is in Figure~\ref{seppantsevenmorefig}.
    
    Suppose $u$ and $v$ are disjoint. Take $A$ to correspond to all curves supported in the subsurface bounded by only $u$ and $C$ to correspond to all curves supported in the subsurface bounded by only $v.$ It follows that any $a\in A$ and $c\in C$ are disjoint.
     \begin{figure}[h]
        \centering
        \begin{tikzpicture}
        \node[anchor = south west, inner sep = 0] at (0,0){\includegraphics[width=3in]{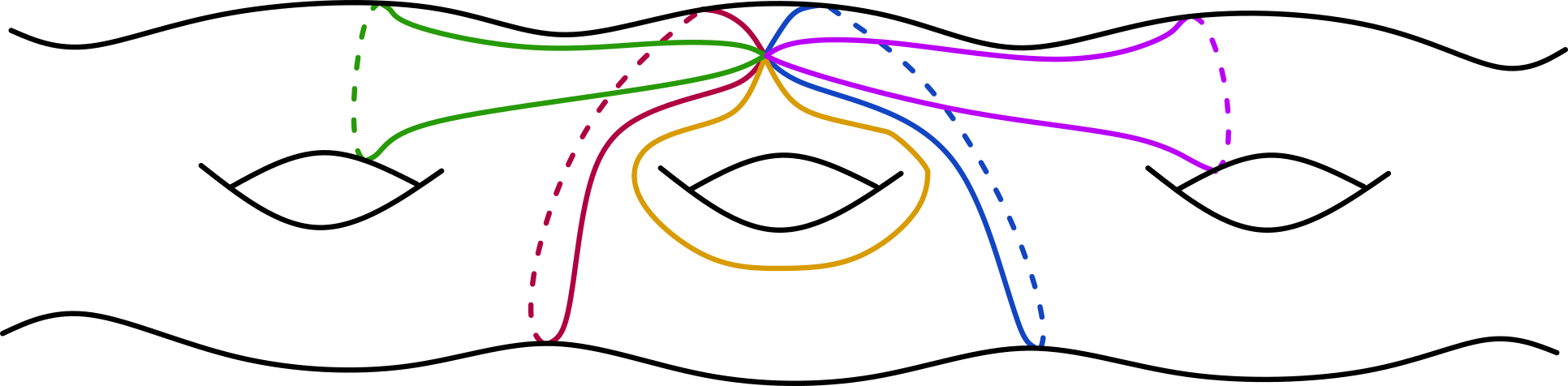}};
        \node at (3,0.7){$u$};
        \node at (5.1, 0.7){$v$};
        \node at (1.6, 1.6) {$a$};
        \node at (4, 0.4) {$b$};
        \node at (6.1, 1.6) {$c$};
        \end{tikzpicture}
         \caption{If $u$ and $v$ form a pants pair, we can find nonseparating curves $a\in A$, $b\in B$, and $c\in C$ such that any two form a pants pair. }
        \label{seppantsevenmorefig}
    \end{figure}
\end{proof}

Now that we have reduced Proposition~\ref{allpants} to the case of nonseparating pairs of curves, we are ready to prove Step 1.

\subsection{Step 1: Neighborhoods of nonseparating curves}\label{sec:neighborhoods}

In this section, we prove that neighborhoods of nonseparating curves are preserved by automorphisms of $\onefine(S_g).$ This is broken down into three main steps. In Step 1.1, we show that automorphisms  preserve adjacent homotopic nonseparating curves in $\onefine(S_g)$. In Step 1.2, we show that automorphisms preserve whether a curve is contained (in some sense) in the annulus bounded by two homotopic curves. In Step 1.3, we show that automorphisms preserve whether homotopic nonseparating curves form a pants pair. %Along the way, we prove that the property of being contained in the annulus bounded by homotopic nonseparating curves is preserved by automorphisms.

\subsubsection*{Step 1.1: Homotopic nonseparating curves}\label{sec:homnonsep}

The main result of this step is the following lemma. %Lemma~\ref{nonsephomotaut}, which says that edges corresponding to homotopic nonseparating curves are preserved by automorphisms of $\onefine(S_g)$. 

\begin{lemma}\label{nonsephomotaut}
   Let $u$ and $v$ be two adjacent nonseparating curves in $\onefine(S_g)$. Let $\varphi \in \Aut \onefine(S_g)$. Then $\varphi(u)$ is homotopic to $\varphi(v)$ if and only if $u$ is homotopic to $v$.  
\end{lemma}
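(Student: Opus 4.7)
The plan is to characterize homotopy of adjacent nonseparating curves via a graph-theoretic condition preserved by $\varphi$, built from the separating-curve and separating-link-quotient machinery of Section~\ref{sec:separatingcurves} together with Proposition~\ref{toruspairaut}.

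First I would reduce to the non-torus-pair case. By Proposition~\ref{toruspairaut}, $\varphi$ preserves torus pairs, and since homotopic simple closed curves have algebraic intersection zero they cannot form a torus pair; hence the lemma is immediate when $(u,v)$ is a torus pair, and I may restrict to the case where neither $(u,v)$ nor $(\varphi(u),\varphi(v))$ is a torus pair. In this reduced setting, $u$ and $v$ are homotopic if and only if $u \cup v$ bounds a (possibly pinched) annulus in $S$, equivalently, one component of $S \setminus (u \cup v)$ is a (possibly pinched) annulus.

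To encode this topological dichotomy graph-theoretically, I would seek a separating curve $\delta \in \link(u,v)$ with $u,v$ on the same side of $\delta$ such that the $(u,v)$-side of $\delta$ is a once-punctured torus $\Sigma_{1,1}$, supplemented by a condition distinguishing homotopic pairs from torus-pair-homology-class pairs realized as pants pairs inside $\Sigma_{1,1}$. For the forward direction, if $u \cup v$ bounds a (pinched) annulus $A$ and $\gamma$ is a loop crossing $u$ transversely once and disjoint from $v$, then a regular neighborhood of $A \cup \gamma$ is a $\Sigma_{1,1}$ with separating boundary $\delta \in \link(u,v)$ containing both $u$ and $v$. For the backward direction, two essential simple closed curves in $\Sigma_{1,1}$ with geometric intersection at most one are either homotopic (as parallel $(p,q)$-curves with zero determinant) or of the torus-pair homology class (determinant one), and one must then exclude the latter realized tangentially via the supplementary condition.

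Each ingredient of the characterization is preserved by $\varphi$: that $\delta$ is separating and adjacent to $u,v$ with $u,v$ on the same side by Lemma~\ref{separating}; that the $(u,v)$-side of $\delta$ is $\Sigma_{1,1}$, which is captured graph-theoretically by the fact that the only essential separating curves in $\Sigma_{1,1}$ are boundary-parallel, so that the property reduces to ``every essential separating curve on that side is homotopic to $\delta$,'' detected via Lemmas~\ref{sepdisjointhom} and~\ref{equividhomo} applied to pairs $(a,\delta)$ which are automatically disjoint when $a$ is interior to the $(u,v)$-side; and the torus-pair exclusion by Proposition~\ref{toruspairaut}.

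The main obstacle I anticipate is implementing the supplementary condition excluding torus-pair-homology-class pants pairs inside $\Sigma_{1,1}$: since homotopy of nonseparating curves is the very statement to be proved, one cannot directly certify that $u,v$ admit disjoint representatives. I would attempt to resolve this indirectly, for example via a triangle configuration $(u,v,w)$ in $\link(u,v)$ that is detectable through Proposition~\ref{toruspairaut} (e.g., constructing a curve $w$ which forms a torus pair with both $u$ and $v$ exactly when $(u,v)$ is of torus-pair homology class), or via auxiliary separating curves whose homotopy types, detected by Lemma~\ref{sepdisjointhom}, certify the desired distinction.
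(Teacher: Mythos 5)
Your route is genuinely different from the paper's. The paper first shows (Lemma~\ref{separatedns}) that being jointly separating is a graph-theoretic property of the pair --- detected by the existence of a separating curve in $\link(u,v)$ that separates $u$ from $v$ --- and then (Lemma~\ref{homjointlysep}) distinguishes homotopic from non-homotopic jointly separating pairs by whether $\Q(u,v)$ is a join. You instead try to certify homotopy by exhibiting a separating $\delta\in\link(u,v)$ with $u,v$ on the same side and with $(u,v)$-side a once-punctured torus. That skeleton is viable, but two steps are wrong or missing as written.

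First, the auxiliary curve $\gamma$ ``crossing $u$ transversely once and disjoint from $v$'' cannot exist when $u$ and $v$ are homotopic: by the very algebraic-intersection argument you use to exclude torus pairs, $\hat{i}(\gamma,u)=\pm1$ forces $\hat{i}(\gamma,v)=\pm1$, so $\gamma$ must meet $v$. (The fix is harmless: let $\gamma$ cross each of $u$ and $v$ once; a regular neighborhood of $A\cup\gamma$ is still a $\Sigma_{1,1}$ with boundary disjoint from $u\cup v$.) Second, and more seriously, the ``main obstacle'' you flag --- excluding determinant-one pairs realized tangentially inside $\Sigma_{1,1}$ --- is left unresolved, so the backward direction of your characterization has no proof; the closing suggestions are too vague to count. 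In fact the obstacle is vacuous, again by algebraic intersection numbers: two non-homotopic essential nonseparating curves in $\Sigma_{1,1}$ meeting in at most one point have slopes with $|ps-qr|=1$, hence $|\hat{i}(u,v)|=1$, whereas a single tangential (non-crossing) intersection contributes $0$ to $\hat{i}$. So once torus pairs are excluded via Proposition~\ref{toruspairaut}, any two adjacent nonseparating curves lying in a common $\Sigma_{1,1}$ are automatically homotopic, and no supplementary condition is needed. You need this observation (or a substitute) to close the argument. Finally, ``every essential separating curve on the $(u,v)$-side of $\delta$ is homotopic to $\delta$'' is only a condition your machinery can see if you quantify over separating curves in $\link(\delta)$ lying in the relevant part of the join from Lemma~\ref{separating}, with homotopy to $\delta$ detected by Lemma~\ref{sepdisjointhom}; with that restriction it does correctly detect that the side has genus one.
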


%By Proposition~\ref{toruspairaut}, torus pairs are preserved by automorphisms. It follows that the set of disjoint and pants pairs is preserved by automorphisms. Disjoint and pants pairs may be either jointly separating or jointly nonseparating. We begin by showing that the properties of being (not) jointly separating are preserved by automorphisms.

Since homotopic curves are jointly separating, the first step in the proof of Lemma~\ref{nonsephomotaut} is to show that the set of jointly separating pairs of curves is preserved by automorphisms of $\onefine(S_g).$

\begin{lemma}\label{separatedns}
    Let $\varphi\in\Aut \onefine(S_g)$ for $g\geq 2.$ Let $u$ and $v$ be adjacent nonseparating curves in $S_g$ that do not form a torus pair. Then $u$ and $v$ are not jointly separating if and only if $\varphi(u)$ and $\varphi(v)$ are not jointly separating. 

    It follows that $u$ and $v$ are jointly separating if and only if $\varphi(u)$ and $\varphi(v)$ are jointly separating. 
\end{lemma}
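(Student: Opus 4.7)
The plan is to give a graph-theoretic characterization of ``jointly separating'' in the spirit of Lemma \ref{separating}, and then observe that the characterization is preserved by $\varphi$. Specifically, for $u, v$ adjacent nonseparating non-torus-pair curves in $\onefine(S_g)$, I would aim to show that $(u, v)$ is jointly separating if and only if $\link(u, v)$ admits a join decomposition. Once this is established, the lemma follows immediately: $\varphi$ preserves adjacency by definition, preserves nonseparating versus separating by Lemma \ref{separating}, and preserves non-torus-pair status by Proposition \ref{toruspairaut}; since ``$\link(u, v)$ is a join'' is purely combinatorial, it is automatically preserved, yielding the desired conclusion.

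For the forward direction (not jointly separating $\Rightarrow$ $\link(u, v)$ is not a join), I would directly adapt the nonseparating case of Lemma \ref{separating}. Given any $a, b \in \link(u, v)$, since $S_g \setminus (u \cup v)$ is connected, I can push a sub-arc of $a$ through the connected complement, crossing both $a$ and $b$ transversely multiple times while avoiding new intersections with $u$ and $v$, producing a curve $a' \in \link(u, v)$ that intersects both $a$ and $b$ at least twice. This prevents $a$ and $b$ from lying in different parts of any join decomposition.

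For the reverse direction (jointly separating $\Rightarrow$ $\link(u, v)$ is a join), I would partition the vertices of $\link(u, v)$ by which closure of a component of $S_g \setminus (u \cup v)$ they lie in. A curve $a \in \link(u, v)$ satisfies $|a \cap (u \cup v)| \leq 2$, so $a \setminus (u \cup v)$ is at most two open arcs; the goal is to show that both arcs lie in the closure of a single component. Curves in different parts are then adjacent, since their intersection is forced into $u \cup v$, where each meets only in at most one point apiece, so the pairwise intersection is at most one point.

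The step I expect to be the main obstacle is handling ``bridging'' curves in the pants-pair configuration: when $(u, v)$ is a pants pair with distinguished intersection point $p_0 = u \cap v$, a curve $a \in \link(u, v)$ hitting each of $u$ and $v$ at a single point distinct from $p_0$ has $a \setminus (u \cup v)$ consisting of two arcs which could a priori lie in different components of the complement. I anticipate resolving this by a careful analysis of the regular neighborhood of $u \cup v$ (a pair of pants with three boundary circles), exploiting that in the jointly separating case the distribution of these three boundary circles among the components of $S_g \setminus N(u \cup v)$ constrains which pairs of components a bridging curve can visit; this should allow a consistent assignment of each bridging curve to one part of the join, or alternatively show that any such $a$ is adjacent to every curve in the relevant parts.
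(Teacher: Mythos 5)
Your approach is genuinely different from the paper's, and its key step fails. You propose to characterize jointly separating pairs by ``$\link(u,v)$ is a join.'' The forward direction (not jointly separating $\Rightarrow$ not a join) is fine, but the reverse direction is false, and the ``bridging'' curves you flag as the main obstacle are precisely the reason. Suppose $u$ and $v$ are disjoint and jointly separating, with complementary components $A$ and $B$. There is an essential simple closed curve $a$ meeting $u$ transversally in one point and $v$ transversally in one point, consisting of one arc in $A$ and one arc in $B$ (this is homologically consistent because $[u]+[v]=0$ in $H_1(S_g;\mathbb{Z}/2)$, and such $a$ is essential since it meets $u$ once). This $a$ is a vertex of $\link(u,v)$, and it is \emph{not} adjacent to every curve in either part: one can choose an essential curve contained in $A$ meeting the $A$-arc of $a$ at least twice, and likewise in $B$. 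Since any two curves contained in $\overline{A}$ admit a common non-neighbor contained in $A$ (and likewise for $B$), the complement graph of $\link(u,v)$ is connected, so $\link(u,v)$ admits \emph{no} join decomposition. Neither of your proposed fixes can work: assigning $a$ to the $A$-part forces its non-neighbors in $B$ into the $A$-part as well, and this cascades until the partition collapses. So your criterion fails to distinguish jointly separating from non-jointly-separating pairs.

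The paper sidesteps this by working only with \emph{separating} curves in $\link(u,v)$: a separating curve $c$ adjacent to both $u$ and $v$ cannot cross either transversally (a single transverse crossing with a separating curve is impossible), so $c$ lies in the closure of a single component of $S_g\setminus(u\cup v)$ and can never bridge. The characterization proved there is that $(u,v)$ is not jointly separating if and only if some separating curve $c\in\link(u,v)$ has $u$ and $v$ on opposite sides; this is combinatorial because Lemma~\ref{separating} shows automorphisms preserve separating curves \emph{and} preserve sides. If you want to salvage a join-type statement, you must restrict to $\link^{\text{sep}}(u,v)$ (or the quotient $\Q(u,v)$, as in Lemma~\ref{homjointlysep}) rather than the full link.
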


\begin{proof}
    We prove the lemma by showing the following: $u$ and $v$ are not jointly separating if and only if there exists a separating curve $c$ such that $c$ separates $u$ and $v.$

     \pit{Suppose $u$ and $v$ are jointly separating} In this case, any separating curve $c$ in $\link(u,v)$ would have to lie in a single component of $S\setminus(u\cup v)$; otherwise, $c$ would intersect $u$ or $v$ at least twice. Therefore, any such $c$ does not separate $u$ and $v.$

    \pit{Suppose $u$ and $v$ are neither jointly separating nor a torus pair}  In this case, there is a separating curve that separates $u$ from $v$ as in Figure \ref{Thm2.3a_2}. To find such a curve, cut $S$ along $u$ and $v$ (retaining the boundaries), and take $c$ to be the separating curve that forms a (potentially pinched) pair of pants with the boundaries arising from $u.$
    \begin{figure}[h]
        \centering
        \begin{tikzpicture}
        \node[anchor = south west, inner sep = 0] at (0,0){\includegraphics[width=2in]{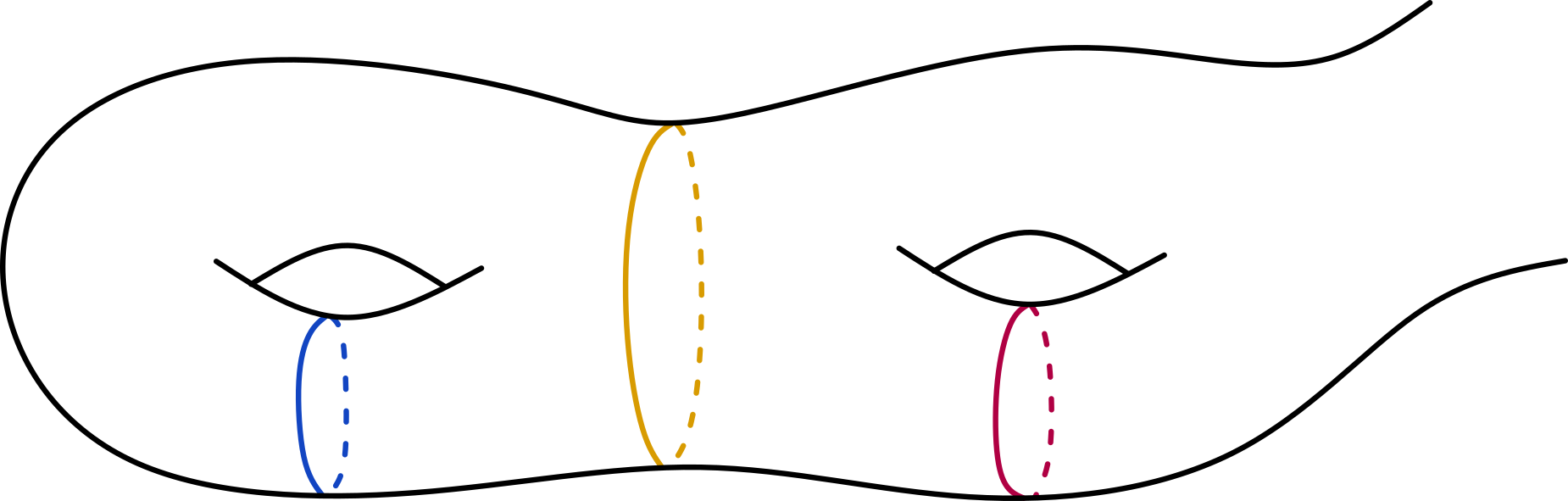}};
        \node at (0.8,0.3) {$u$};
        \node at (3.1, 0.3) {$v$};
        \node at (1.9, 1){$a$};
        \node[anchor = south west, inner sep = 0] at (6,0) {\includegraphics[width=2in]{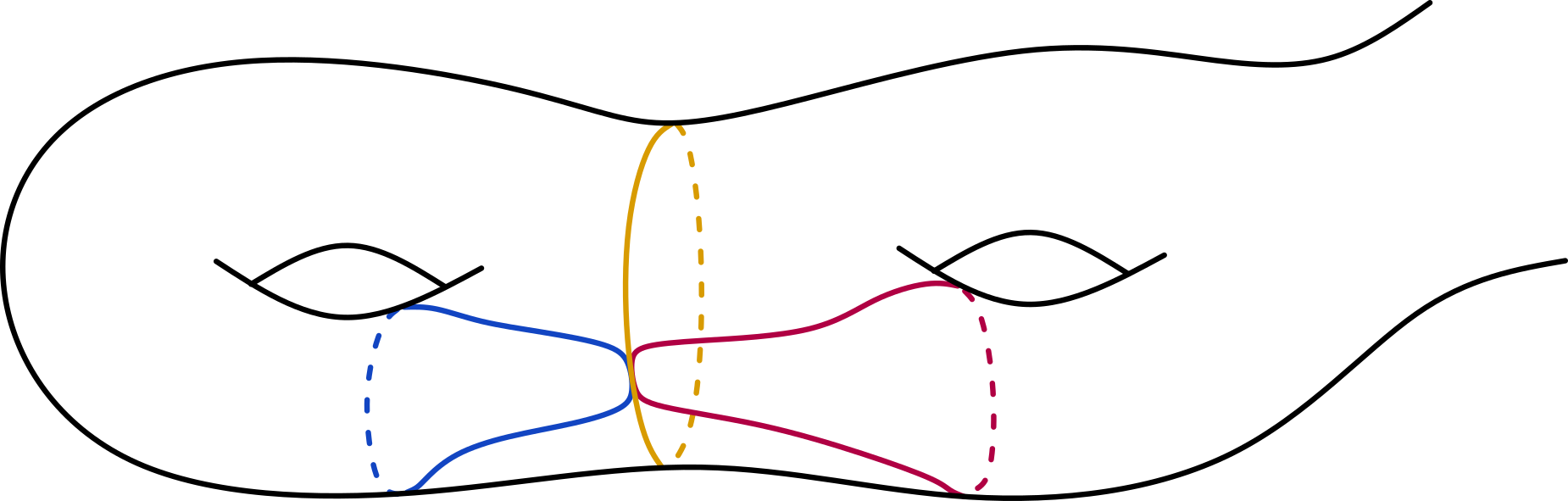}};
         \node at (7.0,0.3) {$u$};
        \node at (9.1, 0.3) {$v$};
        \node at (7.9, 1){$a$};
        \end{tikzpicture}
        \caption{Curves separating $u$ from $v$}
        \label{Thm2.3a_2}
    \end{figure}
\end{proof}

A pair of adjacent homotopic curves is jointly separating, so we now further distinguish between homotopic jointly separating curves and non-homotopic jointly separating curves.

\begin{lemma}\label{homjointlysep}
     Let $u$ and $v$ be two adjacent jointly separating nonseparating curves in $\onefine(S)$. 
     Then, $u$ and $v$ are homotopic if and only if $\varphi(u)$ and $\varphi(v)$ are homotopic.
\end{lemma}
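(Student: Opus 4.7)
My plan is to characterize the homotopy of $u$ and $v$ by a graph-theoretic property of the separating link quotient $Q^{\sep}(u,v)$. Since Lemma~\ref{seplinkaction} gives $Q^{\sep}(u,v) \cong Q^{\sep}(\varphi(u),\varphi(v))$, any graph-theoretic invariant of $Q^{\sep}(u,v)$ is automatically preserved by $\varphi$. The specific property I would establish is: \emph{$u \simeq v$ if and only if $Q^{\sep}(u,v)$ is not a nontrivial join.}

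The first step is to simplify $Q^{\sep}(u,v)$. Since $u$ and $v$ are nonseparating and separating curves are nullhomologous, any separating curve $c \in \link(u,v)$ has zero algebraic intersection with each of $u$ and $v$. Hence a single geometric intersection of $c$ with $u$ or with $v$ must cobound an innermost bigon that can be removed by isotopy. It follows that every vertex of $Q^{\sep}(u,v)$ is represented by a separating curve disjoint from $u \cup v$, and $Q^{\sep}(u,v)$ is naturally identified with the disjointness graph of homotopy classes of separating curves of $S_g$ supported in the components of $S_g \setminus (u \cup v)$.

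Next, I would analyze these components. When $u \not\simeq v$, every component of $S_g \setminus (u \cup v)$ has positive topological complexity (no component is an annulus or a disk), so each one carries essential separating curves of $S_g$; since curves in distinct components admit disjoint representatives, $Q^{\sep}(u,v)$ decomposes as a nontrivial join whose parts are indexed by the components of the complement. When $u \simeq v$, on the other hand, one of the components of $S_g \setminus (u \cup v)$ is either an annulus (in the disjoint configuration) or a disk/bigon (in the pants pair configuration), and this component supports no essential separating curves. Hence all vertices of $Q^{\sep}(u,v)$ come from a single substantial component, and a Dehn twist argument analogous to the one used in Lemma~\ref{toruspairqsepcharlemma} produces pairs of separating curves in this component with essential geometric intersection at least two, showing that $Q^{\sep}(u,v)$ is not a join.

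The main obstacle will be the pants pair subcase. The complement structure at the tangent point is delicate: locally there are three sectors, and the bigon (when $u \simeq v$) interacts with the rest of the complement at a single point. I would need to check carefully that in the homotopic pants pair configuration the bigon truly prevents a join decomposition of $Q^{\sep}$, while in the non-homotopic configuration the surviving components all carry separating curves and yield a genuine join. For $g=2$ the analysis simplifies, since the $\chi$ and boundary counts force any disjoint jointly separating pair of nonseparating curves to be homotopic, so the entire content of the lemma is genuinely needed only for $g \geq 3$ and for the pants pair subcases.
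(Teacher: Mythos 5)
Your proposal is correct and follows essentially the same route as the paper: both reduce via Lemma~\ref{seplinkaction} to showing that $u \simeq v$ if and only if $\Q(u,v)$ fails to be a join, decompose $\Q(u,v)$ into parts indexed by the components of $S_g \setminus (u \cup v)$ in the non-homotopic case, and use a Dehn twist of one separating class about a suitably chosen curve to defeat any join decomposition in the homotopic case. The only cosmetic difference is your framing of the single intersection point as an ``innermost bigon'' --- since the intersection is a tangential (pants-type) touching rather than a transverse crossing, there is no bigon to remove, but the curve can still be pushed off $u \cup v$, which is all that is needed.
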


\begin{proof}
    By Lemma~\ref{seplinkaction}, it is enough to show that $u$ and $v$ are homotopic if and only if $\Q(u,v)$ is not a join.
    
    \pit{Suppose $u$ and $v$ are homotopic} Let $[a]$ and $[b]$ be distinct vertices of $\Q(u,v).$ Then, $a$ and $b$ are necessarily in the same connected component of $S\setminus (u\cup v).$ Let $c$ be a curve in that same component of $S\setminus(u\cup v)$ such that no curve isotopic to $c$ is disjoint from neither $a$ nor $b.$ Then, every curve in the isotopy class $[d]=T_{[c]}[a],$ the Dehn twist of $[a]$ about $[c],$ intersects every curve in the isotopy class of $a$ and every curve in the isotopy class of $b$ at least twice. Choose a representative $d$ of $T_{[c]}[a]$ disjoint from $u\cup v$. Then we have that $[d]$, a vertex of $\Q(u,v)$ is neighbors with neither $[a]$ nor $[b].$ Since this is true for any $[a]$ and $[b]$, we conclude that $\Q(u,v)$ is not  join.

    \pit{Suppose $u$ and $v$ are not homotopic} Let $[a]$ be a vertex of $Q^{\sep}(u,v)$ and $a\in \link(u,v)$ be an arbitrary representative. Then, $a$ is contained in one connected component of $S\setminus (u,v)$ except possibly for points of intersection with $u$ and $v$. By pushing $a$ off of itself in the direction away from $u$ and $v,$ we obtain another curve $a'$ with $[a']=[a].$

    We notice that no curve $a'$ homotopic to $a$ is contained in a different connected component of $S\setminus (u\cup v)$ from $a,$ since then $a$ and $a'$ would bound a (potentially pinched) annulus that must contain $u$ or $v,$ a contradiction.

    We now conclude that $\Q(u,v)$ is indeed a join, where the parts correspond to equivalence classes of curves contained in each of the components of $S\setminus (u\cup v)$ (potentially except for intersections with $u$ and $v$). 
\end{proof}

We now show that homotopic nonseparating pairs of curves are preserved by automorphisms of $\onefine(S_g)$.

\begin{proof}[Proof of Lemma~\ref{nonsephomotaut}]
    By Proposition~\ref{toruspairaut}, torus pairs are preserved by automorphisms, and thus the set of disjoint and pants pairs are preserved by automorphisms. Disjoint and pants pairs may be jointly separating or not jointly separating; by Lemma~\ref{separatedns}, being (not) jointly separating is preserved by automorphisms. Finally, jointly separating pairs may be either homotopic or not. Lemma~\ref{homjointlysep} ascertains that, within the set of jointly separating pairs, homotopic pairs are preserved by automorphisms of $\onefine(S_g).$ By combining these results, we conclude that nonseparating homotopic pairs are preserved by automorphisms.
\end{proof}

\subsubsection*{Step 1.2: Containment in an annulus}\label{sec:containinannulus}

In this section, we prove that automorphisms preserve whether a curve lies in the annulus bounded by two homotopic curves. Such a curve must be homotopic to the two boundary curves, and therefore all three curves must be adjacent in $\onefine(S_g).$ 

First, we define what it means for a curve $w$ to lie in the annulus bounded by adjacent homotopic curves $u$ and $v$ in $\onefine(S_g).$  Let $\ann_{\leq1}(u,v) \subseteq \link(u,v)$ denote the subgraph generated by curves $w$ supported on the (possibly pinched) annulus bounded by $u$ and $v$ such that $|w\cap(u\cup v)| \leq 1$ as in Figure \ref{annulus}.   Our main goal in this section is to prove the following result.

\begin{lemma}\label{lemma:annularmain}
    Let $u,$ $v,$ and $w$ be pairwise adjacent homotopic nonseparating curves in $\onefine(S_g)$ and $\varphi\in\Aut\onefine(S_g)$. Then $w\in\ann_{\leq1}(u,v)$ if and only if $\varphi(w)\in \ann_{\leq1}(\varphi(u),\varphi(v)).$
\end{lemma}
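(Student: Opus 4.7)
The plan is to characterize $w\in\ann_{\leq 1}(u,v)$ by the graph-theoretic condition that every separating curve adjacent to both $u$ and $v$ is also adjacent to $w$, i.e., $\link^{\sep}(u,v)\subseteq\link(w)$. Since $\varphi$ preserves separating curves by Lemma~\ref{separating} and preserves adjacency by definition, this condition is automorphism-invariant, so establishing the characterization yields the lemma.

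The key enabling observation is that any separating $s\in\link(u,v)$ must satisfy $|s\cap u|=|s\cap v|=0$: the null-homology of $s$ forces both intersection numbers to be even, and the bound $|s\cap u|,|s\cap v|\leq 1$ then forces them to vanish. Hence $s$ lies in one of the two components of $S_g\setminus(u\cup v)$. Since an annulus (possibly pinched) contains no essential separating curves, $s$ must lie in the complementary subsurface $X\cong S_{g-1,2}$. This yields the forward direction: if $w\in\ann_{\leq 1}(u,v)$, then $w$ is supported in the closure of the (possibly pinched) annulus bounded by $u\cup v$, meeting $u\cup v$ in at most one point; since $s\subseteq X$ is disjoint from $u\cup v$ and the interiors of the two components are disjoint, $s$ and $w$ must be disjoint, giving $s\in\link(w)$.

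For the backward direction, suppose $w\notin\ann_{\leq 1}(u,v)$, so $w\subseteq X$. I exhibit a separating curve $s\in\link^{\sep}(u,v)\setminus\link(w)$ using a Dehn twist. First, since $X$ has genus $g-1\geq 1$, choose a separating curve $s_0\subseteq X$ disjoint from $u,v,w$. Next, find a nonseparating simple closed curve $c\subseteq X$ disjoint from $u\cup v$ that intersects both $s_0$ and $w$ essentially; the existence of such $c$ is the main technical obstacle, and is established using that $X$ has positive genus and that $w$ is a nontrivial curve in $X$. Setting $s:=T_c(s_0)$, the Dehn twist intersection formula gives $i(s,w)\geq i(s_0,c)\cdot i(c,w)\geq 2$ (each factor is at least $2$ by parity, since $s_0$ is separating in $S_g$ and $w$ is separating in $X$), while $i(s,u)=i(s,v)=0$ as $c$ avoids $u\cup v$. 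Thus $s\in\link^{\sep}(u,v)$ but $s\notin\link(w)$, completing the proof.
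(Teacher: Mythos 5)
Your characterization is the same one the paper uses ($w\in\ann_{\leq1}(u,v)$ iff $\link^{\sep}(u,v)\subseteq\link(w)$), but two steps of your argument are genuinely wrong. First, the ``key enabling observation'' is false: the homology/parity argument bounds only \emph{transverse} intersections, and in $\onefine(S_g)$ a separating curve $s$ may form a pants pair with $u$ or $v$, i.e.\ meet it in a single tangential point. So $|s\cap u|=1$ is possible for $s\in\link^{\sep}(u,v)$, and you cannot conclude that every such $s$ is disjoint from $u\cup v$. Your forward direction still reaches the right conclusion, but for the wrong reason; the correct argument is that $s$ and $w$ lie in closures of complementary regions, so $s\cap w\subseteq w\cap(u\cup v)$, which has at most one point.

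Second, and more seriously, your backward direction skips a case. Since $w$ is adjacent to $u$ and $v$, the negation of $w\in\ann_{\leq1}(u,v)$ splits into (i) $w$ lies in the (possibly pinched) annulus but meets each of $u$ and $v$ in one point, so $|w\cap(u\cup v)|=2$, or (ii) $w$ lies in the complementary subsurface $X$. You only treat (ii). In case (i) your construction cannot work: $w$ is not in $X$, and any separating curve in $\link(u,v)$ can only meet $w$ inside $u\cup v$, so no amount of twisting in $X$ produces $|s\cap w|\geq 2$. The paper handles (i) by taking a separating curve disjoint from $u\cup v$ and isotoping it to touch $u$ at $u\cap w$ and $v$ at $v\cap w$, producing exactly the two intersection points with $w$ needed to violate adjacency. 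Note that this witness is a separating curve forming pants pairs with $u$ and $v$ --- precisely the curves your first observation wrongly rules out; if that observation were true, the characterization itself would fail in case (i). As a minor point, in case (ii) the Dehn-twist machinery is unnecessary in the fine setting: since vertices are actual curves rather than isotopy classes, one can simply perturb a separating curve within its isotopy class, staying off $u\cup v$, so that it crosses $w$ twice.
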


\begin{figure}[h]
\begin{center}
\begin{tikzpicture}
    \node[anchor = south west, inner sep = 0] at (0,0){\includegraphics[width=2in]{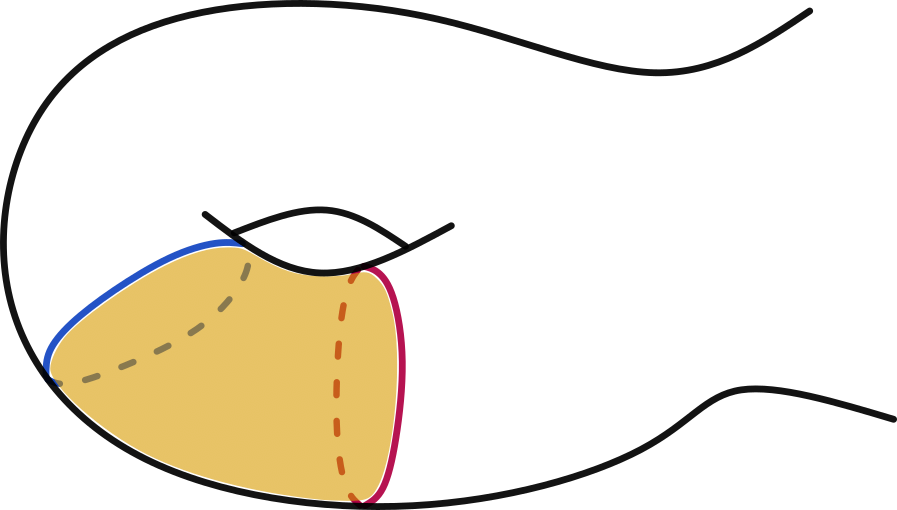}};
    \node at (2.5,1) {$u$};
    \node at (0.8,1.6) {$v$};
    \end{tikzpicture}
    \caption{Annulus formed by a pair of homotopic, nonseparating curves $u$ and $v$}\label{annulus}
\end{center}
\end{figure}

\begin{proof}
    Since elements of $\Aut\onefine(S_g)$ preserve separating links by Lemma \ref{sepdisjointhom}, it is enough to show that $w \in \ann_{\leq1}(u,v)$ if and only if $\link^{\text{sep}}(u,v)\subset \link^{\text{sep}}(w)$. 

    \pit{Suppose $w \in \ann_{\leq1}(u,v)$} Let $a\in \link^{\text{sep}}(u,v)$. Since $|w\cap (u,v)|\leq 1$ and $w$ is in the annulus bounded by $u$ and $v$ while $a$ is not, $|a\cap w|\leq 1.$ We conclude that $a\in\link^{\text{sep}}(w).$

    \pit{Suppose $w \not\in \ann_{\leq1}(u,v)$} Then, $w$ is either (1) in the annulus bounded by $u$ and $v$ and intersects both of them or (2) not in the annulus bounded by $u$ and $v.$

    Suppose $w$ is in case (1), and take any separating curve $a\in\link(u,v)$ disjoint from $u\cup v.$ We can then isotope $a$ to touch $u$ and $v$ at $u\cap w$ and $v\cap w,$ respectively. Thus $a\not\in \link(w).$

    Suppose $w$ is in case (2), and take any separating curve $a\in\link(u,v).$ We can then isotope $a$ to intersect $w$ at least twice, so $a\not\in\link(w)$.

    We conclude that $\link^{\text{sep}}(u,v)\not\subset \link^{\text{sep}}(w)$.
\end{proof}

\subsubsection*{Step 1.3: Homotopic nonseparating pants pairs}\label{sec:homnonseppants}

We now state and prove the result which allows us to distinguish pants pairs from disjoint pairs in the case that $u$ and $v$ are homotopic nonseparating curves. 

\begin{lemma}\label{homnonseppants}
    Let $u$ and $v$ be adjacent homotopic nonseparating curves in $S_g$ and $\varphi\in\Aut\onefine(S_g).$ Then, $(u,v)$ is a pants pair if and only if $(\varphi(u),\varphi(v))$ is a pants pair. It follows that $(u,v)$ is a disjoint pair if and only if $(\varphi(u),\varphi(v))$ is a disjoint pair.
\end{lemma}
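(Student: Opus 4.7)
The plan is to combine Lemma~\ref{lemma:annularmain}, which says that the subgraph $\ann_{\leq 1}(u,v)$ is preserved by $\varphi$, with a topological criterion distinguishing the two cases. By Lemma~\ref{nonsephomotaut}, the set of adjacent homotopic nonseparating pairs is preserved by $\varphi$, and homotopic curves cannot form a torus pair, since their algebraic intersection number is zero while a single transverse intersection has algebraic intersection $\pm 1$. So within the homotopic nonseparating pairs, I need to distinguish pants pairs from disjoint pairs; the final sentence of the lemma then follows at once, since the two cases exhaust all possibilities.

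The key topological observation is the following. If $(u,v)$ is disjoint, then $u$ and $v$ bound a genuine annulus with $u \cap v = \emptyset$, so for any $w \in \ann_{\leq 1}(u,v)$ we have
\[
|w \cap u| + |w \cap v| \;=\; |w \cap (u \cup v)| \;\leq\; 1,
\]
and therefore $w$ is disjoint from at least one of $u,v$. If instead $(u,v)$ is a pants pair with pinch point $p$, one can choose $w$ supported on the pinched annulus and passing through $p$ tangentially to both boundary curves; such a $w$ satisfies $|w \cap u| = |w \cap v| = 1$ (both intersections occurring at $p$) while $|w \cap (u \cup v)| = 1$, so $w \in \ann_{\leq 1}(u,v)$. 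This suggests the characterization: $(u,v)$ is a pants pair if and only if there exists $w \in \ann_{\leq 1}(u,v)$ meeting both $u$ and $v$ non-trivially.

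The main obstacle is to phrase the criterion ``meeting both $u$ and $v$'' in purely graph-theoretic terms, since for homotopic nonseparating pairs this is exactly the pants-versus-disjoint distinction we are trying to detect. The plan for resolving this apparent circularity is to package the witness symmetrically as a triple $(u,v,w)$: in the pants case one can choose $w$ so that $u,v,w$ are mutually tangent at the common pinch point $p$ and each pair lies in the $\ann_{\leq 1}$-neighborhood of the other two, so the criterion can be expressed as the existence of such a triple satisfying a mutual $\ann_{\leq 1}$-containment condition. By Lemma~\ref{lemma:annularmain} this condition is $\varphi$-invariant. The remaining work is a direct topological verification that the symmetric triple exists precisely when $(u,v)$ is a pants pair: in the disjoint case, the displayed inequality above forces any candidate $w$ to be disjoint from one of $u,v$, and a short case analysis rules out the mutual $\ann_{\leq 1}$-containment.
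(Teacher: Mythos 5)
Your setup is fine --- reducing to the pants-versus-disjoint distinction within homotopic nonseparating pairs, and leaning on Lemma~\ref{lemma:annularmain} for $\varphi$-invariance of $\ann_{\leq 1}(u,v)$ --- and you correctly identify the central obstacle: ``$w$ meets both $u$ and $v$'' is not yet a graph-theoretic condition, precisely because for the homotopic pair $(w,u)$ that is the very distinction being sought. But your proposed escape, the symmetric triple with mutual $\ann_{\leq 1}$-containment, does not work for $g \geq 2$. If $w \in \ann_{\leq 1}(u,v)$, then $w$ lies in the pinched annulus $A$ bounded by $u$ and $v$; the complement of $v \cup w$ then has two components, one of which is the sub-annulus of $A$ between $w$ and $v$ (which does not contain $u$) and the other of which contains $u$ but has genus $g-1 \geq 1$ and is not an annulus at all. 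Hence $u \notin \ann_{\leq 1}(v,w)$ for \emph{every} admissible $w$, and the same argument shows $v \notin \ann_{\leq 1}(u,w)$. Your criterion is therefore unsatisfiable even in the pants case and cannot detect anything; the ``short case analysis'' you defer in the disjoint case is moot. (The situation is special to the torus, where the complement of a pinched annulus is again a pinched annulus --- but that case is handled separately in Section~\ref{torussection}.)

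The paper breaks the circularity differently, via Lemma~\ref{nsdisjointhom}: the witness is a curve $\gamma \in \link(u,v)$ forming a \emph{torus pair} with both $u$ and $v$ --- a relation already known to be $\varphi$-invariant by Proposition~\ref{toruspairaut} --- and the criterion is that $\gamma$ can be chosen adjacent to every $\delta \in \ann_{\leq 1}(u,v)$, which is pure adjacency plus the $\varphi$-invariant set $\ann_{\leq 1}(u,v)$. Topologically: if $(u,v)$ is a pants pair, take $\gamma$ through the pinch point, and it meets each essential curve of the pinched annulus exactly once; if $(u,v)$ is disjoint, any $\gamma$ forming torus pairs with both must cross the genuine annulus along an arc, and some curve of $\ann_{\leq 1}(u,v)$ meets that arc twice. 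If you want to salvage your approach, you should replace your mutual-containment condition with a witness of this kind whose defining relations are already known to be combinatorial.
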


The following lemma provides the combinatorial conditions necessary to prove Lemma~\ref{homnonseppants}.

\begin{figure}[h]
\begin{center}
    \begin{tikzpicture}
    \node[anchor = south west, inner sep = 0] at (0,0){\includegraphics[width=2in]{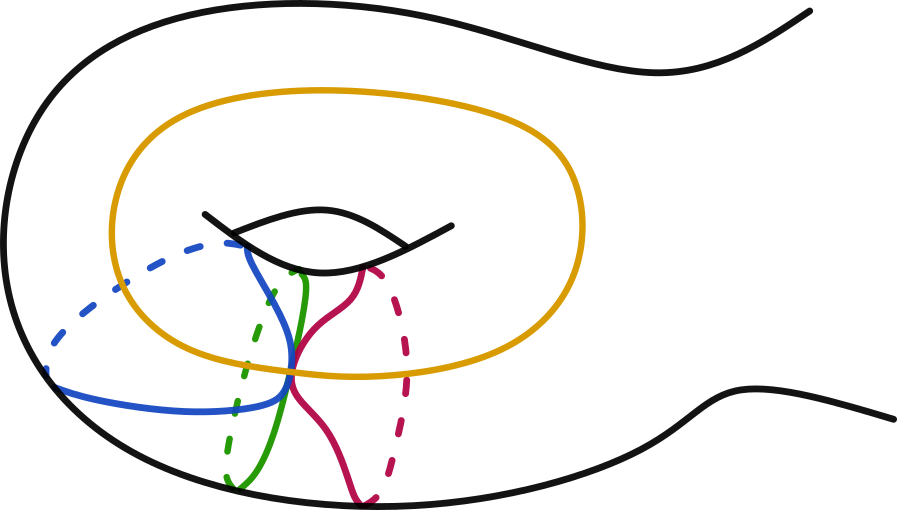}};
    \node at (2,0.5) {$u$};
    \node at (0.5, 1.3) {$v$};
    \node at (2,2.5) {$\gamma$};
    \node at (1.6,0.3) {$\delta$};
    \node[anchor = south west, inner sep = 0] at (6,0){\includegraphics[width=2in]{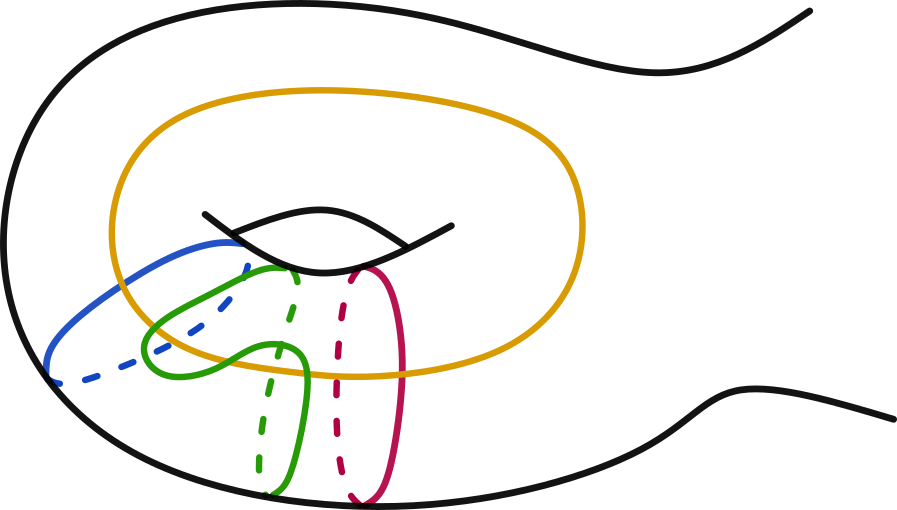}};
    \node at (8.2,0.5) {$u$};
    \node at (6.5, 1.3) {$v$};
    \node at (8,2.5) {$\gamma$};
    \node at (7.55,0.3) {$\delta$};
    \end{tikzpicture}
    \caption{We can distinguish homotopic curves that are disjoint from those that form a pants pair.}\label{touch1fig}
\end{center}
\end{figure}

\begin{lemma}\label{nsdisjointhom}
    Let $u$ and $v$ be adjacent homotopic nonseparating curves in $\onefine(S_g)$. Then $(u,v)$ is a pants pair if and only if there exists a curve $\gamma \in \link(u,v)$ such that 1) $\gamma$ forms torus pairs with both $u$ and $v$ and 2) for any $\delta \in \ann_{\leq1}(u,v)$, $\gamma$ and $\delta$ are adjacent in $\onefine(S_g)$. Otherwise, $u$ and $v$ are disjoint.
\end{lemma}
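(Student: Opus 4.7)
The plan is to analyze the (possibly pinched) annulus $A$ cobounded by $u$ and $v$. Since $u$ and $v$ are adjacent, homotopic, and nonseparating, either they are disjoint and $A$ is a genuine annulus, or they form a pants pair and $A$ is pinched at the unique point $p = u \cap v$; the two directions of the iff correspond to these two cases.

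For the forward direction, I assume $(u,v)$ is a pants pair and construct $\gamma$ as a simple closed curve passing through $p$ transversally to both $u$ and $v$, arranged so that $\gamma \cap A = \{p\}$ globally. Locally near $p$ this is immediate, since $u$ and $v$ share a tangent line there and $\gamma$ can be chosen transverse to that line; globally, the construction works because $S_g \setminus A$ is connected (perturbing $v$ off $u$ into disjoint position, the complement of the resulting genuine annulus is a connected surface of genus $g-1$ with two boundary components, and pinching back at $p$ only identifies two boundary points). Thus the two local sides of $A$ at $p$ lie in the same component of $S_g \setminus A$, through which $\gamma$ can be closed up. The transverse crossings give $|\gamma \cap u| = |\gamma \cap v| = 1$, so both pairs are torus pairs. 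For any $\delta \in \ann_{\leq 1}(u,v)$ we have $\delta \subseteq A$, so $\gamma \cap \delta \subseteq \gamma \cap A = \{p\}$, giving $|\gamma \cap \delta| \leq 1$ and hence adjacency.

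For the backward direction, suppose $u$ and $v$ are disjoint, and suppose toward contradiction that some $\gamma \in \link(u,v)$ satisfies (1) and (2). Let $p_1 = \gamma \cap u$ and $p_2 = \gamma \cap v$, both single transverse points. Then $\gamma \cap A$ is a single simple arc $\alpha$ from $p_1$ to $p_2$ inside the annulus $A$. Pick any interior point $q$ of $\alpha$; note $q$ lies in the interior of $A$. I construct the witness $\delta$ as a simple closed curve in $A$, homotopic to the core, passing through $q$ transversally to $\alpha$ and tangent to $u$ at $p_1$, otherwise disjoint from $u \cup v$. Concretely, start with a core circle of $A$ through $q$ transverse to $\alpha$, then apply a local isotopy supported in a disk neighborhood of an arc in $A$ from the core to $p_1$ (chosen disjoint from $v$ and from $q$), pulling a small sub-arc of the core down to meet $u$ tangentially at $p_1$. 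Then $|\delta \cap (u \cup v)| = 1$, so $\delta \in \ann_{\leq 1}(u,v)$, while $\{p_1, q\} \subseteq \gamma \cap \delta$ gives $|\gamma \cap \delta| \geq 2$, contradicting adjacency. Together the two directions give the iff; the final statement that $u$ and $v$ are otherwise disjoint is then the only remaining case.

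The main delicate step is the local surgery in the backward direction: the isotopy pulling the core down to $p_1$ must preserve the transverse intersection at $q$ and avoid creating intersections with $v$ or self-intersections. This is handled by choosing the isotopy's support to be a small disk disjoint from $q$ and from $v$; the remaining verifications are straightforward topological checks in the annulus.
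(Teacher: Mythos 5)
Your proof is correct and follows essentially the same route as the paper's: in the pants case you take $\gamma$ through the pinch point so that it meets the pinched annulus only there (hence meets every $\delta\in\ann_{\leq1}(u,v)$ at most once), and in the disjoint case you observe that any torus-pair-forming $\gamma$ must cross the annulus in an essential arc, which a suitable curve of $\ann_{\leq1}(u,v)$ can be made to intersect twice. Your write-up just supplies more explicit constructions (closing $\gamma$ up in the connected complement of $A$; routing $\delta$ through $p_1$ to create the second intersection) than the paper's terser argument.
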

\begin{proof}
    Suppose $(u,v)$ is a pants pair. Choose a curve $\gamma\in \link(u,v)$ that forms a torus pair with $u$ and $v$ and passes through their point of intersection. Then, $\gamma$ intersects every essential curve contained in $\ann_{\leq1}(u,v)$ exactly once, and is therefore adjacent to them. A schematic of this situation is pictured on the left in Figure~\ref{touch1fig}.

    Suppose $u$ and $v$ are disjoint. Choose any curve $\gamma\in\link(u,v)$ that forms a torus pair with both $u$ and $v$. Since an interval of $\gamma$ lies in the annulus bounded by $u$ and $v,$ we can choose a curve in $\ann_{\leq1}(u,v)$ that intersects $\gamma$ at least twice. An example of such a curve is shown on the right in Figure~\ref{touch1fig}.
\end{proof}

\begin{proof}[Proof of Lemma~\ref{homnonseppants}]
    The statement follows directly from Lemma~\ref{nsdisjointhom}, as torus pairs (Proposition~\ref{toruspairaut}) and being an element of $\ann_{\leq1}(u,v)$ (Lemma~\ref{lemma:annularmain}) are both preserved by automorphisms of $\onefine(S_g).$
\end{proof}

Lemma~\ref{homnonseppants} will allow us to conclude Proposition~\ref{allpants} in the case that $u$ and $v$ are homotopic.

\subsection{Step 2: Non-homotopic nonseparating curves: pants pairs vs. disjoint pairs}\label{sec:nonsepnonhompants}

The main result from this section is Lemma~\ref{nonsepnonhompants}. It proves Proposition~\ref{allpants} in the case that the edge $(u,v)$ consists of nonseparating, non-homotopic curves.

\begin{lemma}\label{nonsepnonhompants}
    Let $u$ and $v$ be adjacent non-homotopic nonseparating curves in $S_g.$ Then, for any $\varphi\in\Aut\onefine(S_g),$ we have that $(u,v)$ is a pants pair if and only if $(\varphi(u),\varphi(v))$ is a pants pair.  We therefore have that automorphisms of $\onefine(S_g)$ also preserve disjoint pairs of curves.
\end{lemma}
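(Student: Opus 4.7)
The plan is to extend the characterization strategy of Lemma \ref{nsdisjointhom} (which handled the homotopic case) to the non-homotopic setting. Since $(u,v)$ is a pants pair if and only if $u$ and $v$ share a tangential intersection point $p$, while $(u,v)$ disjoint means $u \cap v = \emptyset$, the task reduces to combinatorially detecting the existence of such a shared point.

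First I would introduce an auxiliary curve $u' \homotop u$ with $u' \in \link(u,v)$ and $(u,u')$ a pants pair, and analyze the (possibly pinched) annulus $A$ bounded by $u \cup u'$. The key topological observation is that if $(u,v)$ is disjoint, then $|v \cap u| = 0$ together with $|v \cap u'| \leq 1$ and a standard parity argument for closed curves meeting the boundary of an embedded annulus force $v \cap u'$ to be either empty or a single tangential point. In either case $v$ cannot enter the interior of $A$, since any closed simple curve supported in $A$ would have to be homotopic to the core and hence to $u$, contradicting $v \not\homotop u$. Thus $v \cap A = \emptyset$. Conversely, if $(u,v)$ is a pants pair at $p$, then choosing $u'$ as a pushoff of $u$ that coincides with $u$ at $p$ makes $v$ pass through the pinch of $(u,u')$.

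Next, I would apply Lemma \ref{nsdisjointhom} to the homotopic pants pair $(u,u')$ to obtain a witness curve $\gamma$ forming torus pairs with both $u$ and $u'$ and adjacent to every curve in $\ann_{\leq 1}(u,u')$; such a $\gamma$ necessarily passes through the pinch of $(u,u')$. The characterization of $(u,v)$ being a pants pair would then be: the existence of $u'$ as above together with a witness $\gamma$ that additionally interacts with $v$ in a prescribed combinatorial way reflecting that $\gamma$'s torus-pair intersection with $v$ coincides with the pinch of $(u,u')$.

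The hard part will be formulating this ``intersection-point coincidence'' condition purely combinatorially without presupposing Lemma \ref{nonsepnonhompants} itself. I expect this requires a symmetric construction with a second auxiliary pair $v' \homotop v$ having $(v,v')$ a pants pair, and demanding that a single $\gamma$ simultaneously witnesses the pinches of $(u,u')$ and $(v,v')$; in the pants case the two pinches can be arranged to agree at $p$, while in the disjoint case no such common witness exists because $v \cap A = \emptyset$. Once the characterization is in place, preservation under $\varphi \in \Aut\onefine(S_g)$ follows by combining Lemma \ref{nonsephomotaut}, Proposition \ref{toruspairaut}, Lemma \ref{lemma:annularmain}, and Lemma \ref{homnonseppants}. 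The final statement about disjoint pairs is then immediate, since edges of $\onefine(S_g)$ between non-homotopic nonseparating curves decompose into disjoint, torus, and pants pairs, and the torus and pants types are already preserved.
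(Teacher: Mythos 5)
There is a genuine gap, and you name it yourself: the combinatorial characterization that would let an automorphism detect a pants pair between non-homotopic nonseparating curves is never actually formulated. Your plan hinges on an ``intersection-point coincidence'' condition forcing a single witness $\gamma$ to certify that the pinch of $(u,u')$ and the pinch of $(v,v')$ occur at the same point of $S_g$, but you do not say what graph-theoretic condition accomplishes this, and it is far from clear that any condition phrased in terms of adjacency alone can pin down a coincidence of points (adjacency only sees intersection numbers $\leq 1$, not \emph{where} intersections occur). The claim that ``in the disjoint case no such common witness exists'' is precisely the kind of non-existence statement that needs the characterization written down before it can be checked. So as it stands the proof does not go through.

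The paper avoids this difficulty by characterizing the \emph{disjoint} case rather than the pants case, which turns out to be much easier (Lemma~\ref{nonseppants}): for non-homotopic nonseparating adjacent $u,v$ that are not a torus pair, $u$ and $v$ are disjoint if and only if there exist adjacent $\alpha,\beta\in\link(u,v)$ with $\alpha$, $u$, $\beta$ pairwise homotopic and pairwise disjoint and $u\in\ann_{\leq1}(\alpha,\beta)$. The key difference from your setup is that $\alpha$ and $\beta$ are two \emph{disjoint} pushoffs, so $u$ lies in the interior of an honest (unpinched) annulus $N$ with $\partial N=\alpha\cup\beta$; if $v$ met $u$ tangentially, the arc of $v\cap N$ through the tangency would stay on one side of $u$ and hence hit one of $\alpha,\beta$ twice, contradicting $\alpha,\beta\in\link(u,v)$. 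All three conditions on $(\alpha,u,\beta)$ are already known to be preserved by $\varphi$ --- homotopy by Lemma~\ref{nonsephomotaut}, disjointness of homotopic nonseparating pairs by Lemma~\ref{homnonseppants}, and membership in $\ann_{\leq1}$ by Lemma~\ref{lemma:annularmain} --- exactly the three lemmas you list at the end, so no new ``coincidence'' condition is needed. Your instinct about which prior results to invoke is right; what is missing is this one-annulus reformulation that makes the characterization purely combinatorial.
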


The main tool used to prove the above lemma is Lemma~\ref{nonseppants}, which provides a necessary and sufficient combinatorial condition for non-homotopic nonseparating curves to form a pants pair.

\begin{figure}[h]
\begin{center}
    \begin{tikzpicture}
    \node[anchor = south west, inner sep = 0] at (0,3){\includegraphics[width=3in]{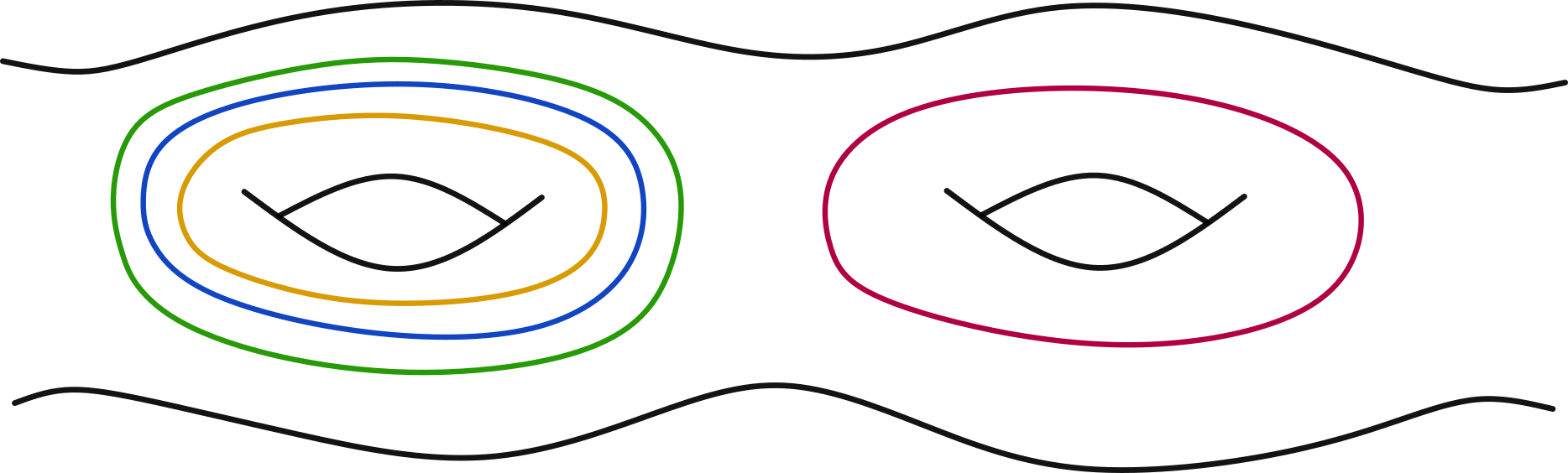}};
    \node at (0.4, 4.3) {$\alpha$};
    \node at (2.8, 4.25) {$\beta$};
    \node at (6.8,4.3) {$v$};
    \node at (2,5.65) {$u$};
    \draw [->, ultra thin] (2,5.5) -- (2,4.9);
    \begin{comment}
    \node[anchor = south west, inner sep = 0] at (0,3){\includegraphics[width=3in]{}};
    \node at (0.5, 4.3) {$\alpha$};
    \node at (3.2, 4.25) {$\beta$};
    \node at (7,4.4) {$v$};
    \node at (2,5.65) {$u$};
    \draw [->, ultra thin] (2,5.5) -- (2,4.8);
    \end{comment}
    \node[anchor = south west, inner sep = 0] at (0,0){\includegraphics[width=3in]{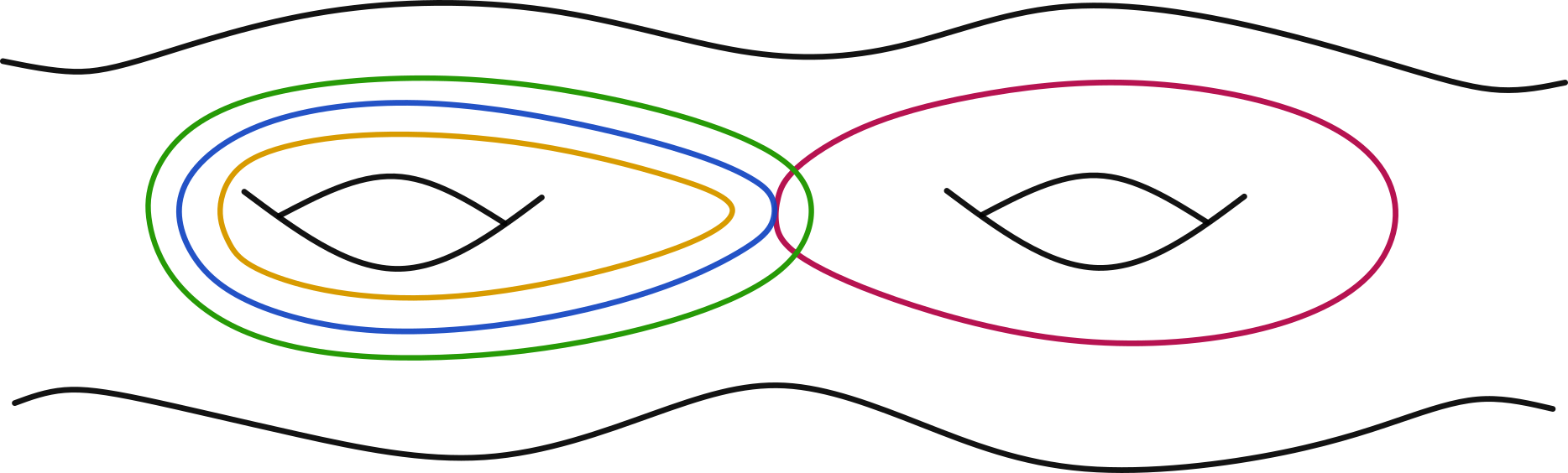}};
    \node at (0.5, 1.3) {$\alpha$};
    \node at (3.2, 1.25) {$\beta$};
    \node at (7,1.4) {$v$};
    \node at (2,2.65) {$u$};
    \draw [->, ultra thin] (2,2.5) -- (2,1.8);
    \end{tikzpicture}
    \caption{We can distinguish non-homotopic curves that are disjoint from those that form a pants pair.}\label{touch2fig}
\end{center}
\end{figure}

\begin{lemma}\label{nonseppants}
    Let $u$ and $v$ be adjacent non-homotopic nonseparating curves in $\onefine(S)$.  Suppose that $(u,v)$ is not a torus pair.  Then $u$ and $v$ are disjoint if and only if there exist adjacent curves $\alpha,\beta\in\link(u,v)$, such that
    \begin{enumerate}[label=(\arabic*)]
       \item $\alpha$, $u$, and  $\beta$ are homotopic,
        \item $\alpha$, $u$, and $\beta$ are disjoint, and
        \item $u \in \ann_{\leq1}(\alpha, \beta).$
    \end{enumerate}
    Otherwise, $(u, v)$ is a pants pair.
\end{lemma}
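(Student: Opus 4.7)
The plan is to prove the two directions by building or studying a regular annular neighborhood of $u$ and tracking how $v$ meets its boundary.

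For the $(\Rightarrow)$ direction, I would assume $u$ and $v$ are disjoint and choose a closed regular annular neighborhood $A$ of $u$ small enough to be disjoint from $v$, which is possible precisely because $u\cap v=\emptyset$. Taking $\alpha,\beta$ to be the two boundary components of $A$, the three curves $\alpha,u,\beta$ are parallel copies of $u$, so they are pairwise disjoint and homotopic, establishing conditions (1) and (2). Condition (3) holds because $u$ lies in the interior of $A$ with $u\cap(\alpha\cup\beta)=\emptyset$, so in particular $|u\cap(\alpha\cup\beta)|\leq 1$ and $u$ is supported on the annulus bounded by $\alpha$ and $\beta$. Finally $\alpha,\beta\in\link(u,v)$ since $A$ is disjoint from $v$, and $\alpha$ is adjacent to $\beta$ since they are disjoint. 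This direction is essentially a direct verification.

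For the $(\Leftarrow)$ direction I would argue by contradiction: suppose the $\alpha$ and $\beta$ in the statement exist but $(u,v)$ is not a disjoint pair. Because we assumed $(u,v)$ is not a torus pair, it must then be a pants pair, so $u$ and $v$ meet tangentially at a single point $p$. Conditions (1) and (2) imply that $\alpha,u,\beta$ cobound a genuine (unpinched) annulus $A$ with boundary $\alpha\cup\beta$ and interior containing $u$. Since $v$ is not homotopic to $u$, it is not contained in $A$, so I would focus on the arc $\gamma$ of $v\cap\overline{A}$ passing through $p$; its two endpoints lie on $\partial A=\alpha\cup\beta$. The key geometric observation is that because $(u,v)$ is a pants pair, $v$ is tangent to $u$ at $p$ and stays on a single local side of $u$; together with $u\cap v=\{p\}$, this forces the arc $\gamma$ to remain on one side of $u$ throughout $A$ (say the $\alpha$ side). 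Hence both endpoints of $\gamma$ lie on $\alpha$, so $|v\cap\alpha|\geq 2$, contradicting $\alpha\in\link(v)$. Since $(u,v)$ is adjacent and not a torus pair, ruling out pants pair forces $u$ and $v$ to be disjoint.

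The main obstacle is the last step: rigorously justifying that both endpoints of the arc $\gamma$ through $p$ lie on the same boundary component of $A$. This requires carefully combining the tangency-without-crossing definition of a pants pair with the hypotheses $u\cap v=\{p\}$ and $\alpha\cap u=\beta\cap u=\emptyset$ to rule out $\gamma$ crossing $u$ and exiting through the opposite side of $A$. Once this is pinned down, the contradiction with $|v\cap\alpha|\leq 1$ is immediate, and all remaining verifications are standard.
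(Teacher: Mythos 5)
Your proof is correct and follows essentially the same route as the paper's: take the boundary of a regular annular neighborhood of $u$ disjoint from $v$ for the forward direction, and for the converse observe that an intersection point of $u$ with $v$ would force $v$ to meet $\alpha$ or $\beta$ twice. Your careful argument that the tangential arc of $v$ through $p$ must exit through the same boundary component it entered is in fact a more detailed justification of a step the paper states in one line; the only small correction is that it is condition (3), not conditions (1)--(2), that places $u$ in the interior of the annulus bounded by $\alpha$ and $\beta$.
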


The main idea behind Lemma~\ref{nonseppants} is that $u$ and $v$ are disjoint if and only if there is a regular neighborhood of $u$ that is disjoint from $v.$  Figure \ref{touch2fig} gives a schematic of the proof.

\begin{proof}[Proof of Lemma~\ref{nonseppants}]
    Suppose first that $u$ and $v$ are disjoint. Then, there is an annular neighborhood of $u$ disjoint from $v.$ The boundary components of such a neighborhood are the desired curves $\alpha$ and $\beta.$
    
    Suppose now that such $\alpha$ and $\beta$ exist.  Let $N$ be the annulus with boundary components $\alpha$ and $\beta$.  By hypothesis, $u \subseteq \text{Int}(N)$. If $u \cap v \neq \emptyset$, then $v$ must intersect either $\alpha$ or $\beta$ in two places, which contradicts the assumption that $\alpha, \beta \in \link(u,v)$.  
\end{proof}

We are now ready to complete the main result of the Section \ref{sec:nonsepnonhompants}.

\begin{proof}[Proof of Lemma~\ref{nonsepnonhompants}]
Suppose that $u$ and $v$ are disjoint. Let $\alpha$ and $\beta$ be as in Lemma~\ref{nonseppants}. Then $\varphi$ preserves property (1) of $\alpha$ and $\beta$ by Lemma~\ref{nonsephomotaut}, property (2) by Lemma~\ref{homnonseppants}, and property (3) by Lemma~\ref{lemma:annularmain}. Hence $\varphi(\alpha)$ and $\varphi(\beta)$ realize $\varphi(u)$ and $\varphi(v)$ as being disjoint by Lemma \ref{nonseppants}, so the result follows.
\end{proof}

We are now ready to complete the main result of Section \ref{pantspairssection}.

\begin{proof}[Proof of Proposition~\ref{allpants}]
    Let $u$ and $ v$ be adjacent curves in $\onefine(S_g).$ We must show that if $(u,v)$ is a pants pair, then for any $\varphi\in \Aut\onefine(S_g),$ $(\varphi(u),\varphi(v))$ is also a pants pair. By Lemma \ref{seppantslemma}, we may assume that $u$ and $v$ are nonseparating. 
 We prove the proposition with casework.

    \pit{Case 1} If $u$ and $v$ are homotopic, nonseparating, and form a pants pair, Lemma~\ref{homnonseppants} asserts that $\varphi(u)$ and $\varphi(u)$ form a pants pair.

    \pit{Case 2} If $u$ and $v$ are non-homotopic, nonseparating, and form a pants pair, Lemma~\ref{nonsepnonhompants} asserts that $\varphi(u)$ and $\varphi(u)$ form a pants pair.
\begin{comment}
    \p{Case 1: $u$ and $v$ are nonseparating and homotopic} Lemma~\ref{nsdisjointhom} allows us to conclude that pants pairs are preserved by automorphisms by providing a combinatorial condition for pants pairs. In particular, torus pairs (Proposition~\ref{toruspairaut}) and annuli (Lemma~\ref{annularneighborhoodlemma}) are preserved by automorphisms. 

    \p{Case 2: $u$ and $v$ are nonseparating and not homotopic} Lemma~\ref{nonseppants} gives us that pants pairs are preserved by automorphisms by providing a combinatorial condition for pants pairs. In particular, whether nonseparating curves are homotopic (the work in Section~\ref{toruspairssection}), whether homotopic nonseparating curves are disjoint (Case 1), and annuli (Lemma~\ref{annularneighborhoodlemma}) are preserved by automorphisms. 

    \p{Case 3: $u$ is separating} Lemma~\ref{seppantslemma} gives us that pants pairs are preserved by automorphisms by providing a combinatorial condition for pants pairs. We check that the conditions are indeed combinatorial for all three subcases.
        
         \textbf{Case 3.1: $v$ is nonseparating.} Lemma~\ref{separating} verifies the join conditions on $\link(u)$ and Case 2 is concerned with nonseparating, not homotopic pants pairs.
         
         \textbf{Cases 3.2 and 3.3: $v$ is separating.} Lemma~\ref{sepdisjointhom} verifies the join conditions on $\link(u,v)$ and Case 2 is concerned with nonseparating, not homotopic pants pairs.
\end{comment}
\end{proof}

\section{Torus Case}\label{torussection}

In this section, we approach the case where the surface $S_g$ has genus 1, and is therefore a torus. We will denote our surface by $T$ to avoid ambiguity. Our previous tools do not apply in the torus case because there are no essential separating curves on a torus. 

Proposition~\ref{toruscase} is the main result of this section.

\begin{proposition}\label{toruscase}
    Let $T$ be a torus. Then the natural map
    \[\Phi:\Homeo(T)\to \Aut\onefine(T)\]
    is an isomorphism.
\end{proposition}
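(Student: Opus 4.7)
The plan is to reduce Proposition~\ref{toruscase} to Le~Roux--Wolff's theorem that the natural map $\Phi_{\text{LW}}: \Homeo(T) \to \Aut\onetrans(T)$ is an isomorphism. Observe that $\onefine(T)$ and $\onetrans(T)$ share the same vertex set (the essential simple closed curves in $T$, all of which are nonseparating), and that the edges of $\onefine(T)$ which do not appear in $\onetrans(T)$ are precisely the pants pair edges. The central technical step will therefore be to show that every $\varphi \in \Aut\onefine(T)$ sends pants pair edges to pants pair edges; equivalently, $\varphi$ restricts to an automorphism of $\onetrans(T)$.

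Granted this preservation, restriction defines a homomorphism $\rho: \Aut\onefine(T) \to \Aut\onetrans(T)$, and composing with $\Phi_{\text{LW}}^{-1}$ produces a map $\Psi: \Aut\onefine(T) \to \Homeo(T)$. By the naturality of restriction, $\rho \circ \Phi = \Phi_{\text{LW}}$, so $\Psi \circ \Phi = \Phi_{\text{LW}}^{-1} \circ \Phi_{\text{LW}} = \id$, giving injectivity of $\Phi$. For surjectivity, given $\varphi \in \Aut\onefine(T)$, set $f = \Psi(\varphi)$. Then $\varphi$ and $\Phi(f)$ induce the same automorphism of $\onetrans(T)$ under $\rho$, and in particular agree on the common vertex set. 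Since an automorphism of $\onefine(T)$ is determined by its action on vertices, $\Phi(f) = \varphi$.

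The hard part is the pants pair preservation claim. The arguments of Section~\ref{pantspairssection} rely crucially on separating curves, so the constructions $\link^{\sep}$ and $\Q$ from Section~\ref{sec:separatingcurves} are vacuous on $T$ and cannot be used directly. A new combinatorial characterization of pants pair edges internal to $\onefine(T)$ is therefore required. The relevant structural facts are that, on $T$, an adjacent pair $(u,v)$ is a disjoint pair or a pants pair exactly when $u$ and $v$ are homotopic, and that torus pair edges are the only edges connecting non-homotopic curves; additionally, pants pair edges are the only edges connecting homotopic curves that intersect. The plan is to express each of these distinctions purely in graph-theoretic terms: for example, torus pairs fill $T$, so their complement contains no essential curve, while for pants pairs the shared single point of intersection produces specific coincidence patterns among common neighbors (such as a curve $w$ that forms a torus pair with both $u$ and $v$ at the same point, which is possible for pants pairs but not for disjoint pairs). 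Making these geometric distinctions rigorous in the abstract language of the graph $\onefine(T)$ and verifying their invariance under $\Aut\onefine(T)$ is the technical heart of the torus case.
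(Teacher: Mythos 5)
Your overall architecture matches the paper's: reduce to Le~Roux--Wolff by showing that every $\varphi \in \Aut\onefine(T)$ preserves the pants-pair edges (hence restricts to an automorphism of $\onetrans(T)$), and your bookkeeping with the restriction homomorphism $\rho$ and the inverse $\Psi$ is essentially the paper's concluding argument. The problem is that the ``technical heart'' you correctly identify is left as an acknowledged gap, and the two ideas you gesture at would not become proofs without substantial new input. First, ``torus pairs fill $T$, so their complement contains no essential curve'' is a geometric fact, not a graph-theoretic one: in $\onefine(T)$ an edge does not tell you whether the two curves are disjoint or intersect once, so you cannot read off ``no essential curve in the complement'' from adjacency data. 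This is exactly the difficulty Le~Roux--Wolff's necklace-clique machinery was built to overcome, and the paper has to re-verify the key step (their Lemma 2.5, producing a common neighbor $d$ meeting every complementary component of a non-necklace clique) in the presence of the new pants-pair edges, with fresh casework on the intersection patterns $(1,1,P)$, $(P,0,0)$, $(P,P,0)$, $(P,P,P)$. Nothing in your proposal addresses why the characterization survives the addition of these edges.

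Second, your suggested pants-versus-disjoint criterion --- a curve $w$ forming a torus pair with both $u$ and $v$ ``at the same point'' --- presupposes you can detect torus pairs (circular unless that step comes first, which you do not order) and, more seriously, that you can detect coincidence of intersection points combinatorially. In the genus $\geq 2$ argument the analogous detection (Lemma~\ref{nsdisjointhom}) leans on the subgraph $\ann_{\leq1}(u,v)$, whose combinatorial characterization (Lemma~\ref{lemma:annularmain}) uses separating links --- unavailable on $T$, as you yourself note. The paper instead proves a different criterion on the torus: for a non-torus pair $(u,v)$, the homotopic link $\link^{\mathrm{hom}}(u,v)$ is a join if and only if $(u,v)$ is a pants pair (Proposition~\ref{toruspantsvsdisjoint}). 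Some argument of this kind is needed, and your proposal does not contain one.
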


We prove Proposition~\ref{toruscase} in two steps. First, in Section~\ref{sec:torustorusvselse}, we use the proof method of Le~Roux--Wolff \cite{transaut} to show that torus pairs are preserved by automorphisms of $\onefine(T)$. Then, in Section~\ref{sec:toruspantsvsdisjoint}, we show that pants pairs (and therefore disjoint pairs) are preserved by automorphisms of $\onefine(T).$ 

\subsection{Torus pairs vs. non-torus pairs}\label{sec:torustorusvselse}

The goal of this section is to prove Proposition~\ref{prop:torustorusvselse}.

\begin{proposition}\label{prop:torustorusvselse}
    Let $u$ and $v$ be adjacent curves in $\onefine(T)$ and  $\varphi\in\Aut\onefine(T).$ Then $(u,v)$ is a torus pair if and only if $(\varphi(u),\varphi(v))$ is a torus pair.
\end{proposition}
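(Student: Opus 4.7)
Since the torus has no essential separating curves, the tools of Section~\ref{sec:separatingcurves} do not apply for $g = 1$, and I would adapt the combinatorial approach of Le~Roux--Wolff~\cite{transaut}. The key topological dichotomy is that any adjacent pair $(u,v)$ in $\onefine(T)$ is either a torus pair (in which case $u \cup v$ fills $T$ and $T \setminus (u \cup v)$ is an open disk) or a homotopic adjacent pair, i.e., a disjoint or pants pair (in which case $u$ is isotopic to $v$ and $u \cup v$ bounds a possibly pinched annulus). This dichotomy follows from the fact that two essential simple closed curves on $T$ that are disjoint or meet tangentially at a single point must be homotopic, as either configuration can be perturbed to produce a pair of disjoint essential curves on the torus, forcing isotopy.

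My plan is to combinatorially detect the annular region that arises in the homotopic case. For a homotopic pair $(u,v)$, the bounding annulus supports an infinite family of pairwise disjoint curves isotopic to $u$ and $v$, all contained in $\link(u,v)$. In contrast, for a torus pair, the complement being a disk forbids any such parallel family disjoint from $u \cup v$. I would therefore aim for a combinatorial condition of the form: $(u,v)$ is a torus pair if and only if $\link(u,v)$ does not contain a certain structural configuration that witnesses the existence of an annular neighborhood of $u \cup v$. A natural candidate is an infinite set of pairwise adjacent curves $\{w_i\} \subseteq \link(u,v)$ together with a common neighbor $z \in \link(w_i)$ for all $i$, such that $z$ combinatorially ``links'' every $w_i$ with both $u$ and $v$ in the manner forced by parallel curves inside an annulus. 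The specific formulation should be modeled on the invariant that Le~Roux--Wolff use for $\onetrans(T)$, adapted to account for the extra pants pair edges present in $\onefine(T)$.

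The main obstacle is that this detection must be performed purely from the graph structure of $\onefine(T)$, without any a priori knowledge of which edges are torus, pants, or disjoint pairs. Following Le~Roux--Wolff, I expect the argument to proceed by iteratively identifying configurations forced to be of a single type---combinatorial patterns that can arise only from filling pairs, or only from parallel families---and then using their complements to flag the remaining case. The passage from $\onetrans(T)$ to $\onefine(T)$ requires absorbing the additional pants pair edges into the framework, but the bootstrapping strategy is analogous. Once the combinatorial characterization is established, Proposition~\ref{prop:torustorusvselse} is immediate since the characterizing property is intrinsic to $\onefine(T)$ and is therefore preserved by any $\varphi \in \Aut\onefine(T)$.
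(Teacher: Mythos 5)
Your overall strategy---reduce to Le~Roux--Wolff by finding a purely combinatorial characterization of torus pairs in $\onefine(T)$---is the right one, and your topological dichotomy (a non-torus adjacent pair on $T$ consists of homotopic curves bounding a possibly pinched annulus) is correct. But there is a genuine gap: the invariant you propose does not actually distinguish the two cases, and the invariant that does work requires new casework that you have not carried out. Your candidate witness for the annular case is an infinite pairwise-adjacent family in $\link(u,v)$ together with a common neighbor; however, a torus pair also admits such configurations (for instance, an infinite family of parallel $(1,1)$-curves, each meeting $u$ and $v$ exactly once, is pairwise disjoint, lies in $\link(u,v)$, and has common neighbors), so the absence of this pattern cannot characterize torus pairs. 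Deferring ``the specific formulation'' to Le~Roux--Wolff does not close the gap, because their invariant is not a property of $\link(u,v)$ at all: it is the \emph{necklace clique}---a triple $(a,b,c)$ of pairwise crossing curves with no common intersection point, detected combinatorially by the existence of a finite set of at most $8$ vertices dominating $\link(a,b,c)$. A pair is then a torus pair if and only if it extends to a necklace clique, and this is the characterization one transports along $\varphi$.

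The actual work in adapting this to $\onefine(T)$ is verifying that the finite-domination criterion still detects necklace cliques when pants-pair edges are present. Concretely, one must show that every clique $(a,b,c)$ that is \emph{not} of necklace type admits a curve $d\in\link(a,b,c)$ meeting every component of $T\setminus\{a,b,c\}$, which can then be isotoped within the complement to cross any prescribed finite collection of curves arbitrarily many times, defeating domination by a finite set. The new cases are exactly the triples whose intersection types involve pants pairs---$(1,1,P)$, $(P,0,0)$, $(P,P,0)$, and $(P,P,P)$---and each requires an explicit construction of $d$ (for $(1,1,P)$ one must further separate the subcases of one versus three intersection points, which behave differently because the complement has a different number of components). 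One must also check that a pants pair, like a disjoint pair, never extends to a necklace clique. None of this appears in your proposal, so as written it is a plan rather than a proof.
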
 

We prove this proposition by building on the work of Le~Roux--Wolff. We begin by introducing the relevant results and explaining how we adapt their statements and proofs to apply in the case of the fine 1-curve graph on the torus. 

For any nonspherical, possibly non-orientable and possibly noncompact surface $S$, Le~Roux--Wolff study the graph $\onetrans(S)$, which has a vertex for every essential, simple, closed curve and two vertices are connected by an edge if they are either disjoint or they have one topologically transverse intersection point. (In the language of our paper, the edges correspond to disjoint pairs and torus pairs.) Le~Roux--Wolff show that $\Homeo(S)\cong \Aut\onetrans(S)$ via the natural homomorphism \cite{transaut}. 

To do this, they categorize all the possible curve configurations. A \textit{clique} is a collection of pairwise adjacent vertices in a graph. In particular, a clique $(a,b,c)$ is of type \textit{necklace} if $(a,b),\ (b,c),$ and $(c,a)$ are all torus pairs and do not have a common intersection point. Then they make the assertion that in $\onetrans(S),$
\begin{enumerate}[label=(\arabic*)]
    \item a clique $(a,b,c)$ is of type necklace if and only if there exists a finite set of (at most 8) vertices such that every $d\in\link(a,b,c)$ is adjacent to at least one vertex in the finite set,
    \item adjacent vertices $a, b$ are disjoint if and only if there is no $c\in\link(a,b)$ such that $(a,b,c)$ is of type necklace, and
    \item $a, b$ form a torus pair if and only if $a$ and $b$ are not disjoint.
\end{enumerate}

In the case of the fine 1-curve graph of the torus, these properties still hold with an adjustment to the last two:
\begin{enumerate}[label=(\arabic*)]
    \item [($2'$)] adjacent vertices $a, b$ are disjoint or a pants pair if and only if there is no $c\in\link(a,b)$ such that $(a,b,c)$ is of type necklace and
    \item  [($3'$)] adjacent vertices $a, b$ are a torus pair if and only if $a$ and $b$ are neither disjoint nor a pants pair.
\end{enumerate}

In fact, (1) is the main property we must verify en route to Proposition~\ref{prop:torustorusvselse}. It is stated in the following lemma.

\begin{lemma}\label{property1}
    A clique $(a,b,c)$ in $\onefine(T)$ is of type necklace if and only if there exists a finite set of (at most 8) vertices such that every $d\in\link(a,b,c)$ is adjacent to at least one vertex in the finite set.
\end{lemma}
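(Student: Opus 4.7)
The plan is to extend the analogous characterization by Le~Roux--Wolff \cite{transaut} from $\onetrans(T)$ to $\onefine(T)$. The extension requires additional care because $\onefine(T)$ contains edges for pants pairs that are absent in $\onetrans(T)$. Throughout, I interpret the dominating set as being contained in $\link(a,b,c)$ itself; otherwise the statement is vacuous, since every vertex in $\link(a,b,c)$ is trivially adjacent to each of $a, b, c$.

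For the forward direction, suppose $(a,b,c)$ is a necklace. First I would note that $a \cup b \cup c$ fills $T$, cutting it into finitely many polygonal disks. Any $d \in \link(a,b,c)$ meets $a \cup b \cup c$ in at most three points, so its homology class $(p,q)\in\mathbb{Z}^2$ satisfies $|\iota([d],[x])| \leq 1$ for each $x \in \{a,b,c\}$, where $\iota$ denotes the algebraic intersection form. Since $([a],[b],[c])$ is a Farey triple spanning $H_1(T;\mathbb{Z})$, a short computation shows that only the three unoriented isotopy classes of $a, b, c$ appear in $\link(a,b,c)$. For each such class I would then construct a small collection of reference representatives inside $\link(a,b,c)$, following the explicit construction of Le~Roux--Wolff, using parallel copies on both sides of the corresponding member of $\{a,b,c\}$, so that any $d$ in a given class is adjacent to at least one reference of that class; keeping the total count at most $8$ is the arithmetic goal.

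For the backward direction, suppose $(a,b,c)$ is not a necklace. On the torus, two essential simple closed curves are adjacent in $\onefine(T)$ iff either they are homotopic and meet in at most one point, or they form a torus pair. Non-necklace cliques therefore fall into one of three cases: (A) $a, b, c$ are pairwise homotopic, (B) exactly two are homotopic to each other, or (C) they are pairwise non-homotopic torus pairs sharing a common intersection point. In each case I would construct an explicit infinite antichain $\{d_n\}\subseteq\link(a,b,c)$ of pairwise non-adjacent vertices. For (A), the antichain consists of curves of homology $(n,\pm 1)$ with $|n|$ growing, each realized in $\link(a,b,c)$; for (B) and (C), iterated Dehn twists about an auxiliary curve in $\link(a,b,c)$ furnish the family. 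The existence of such an antichain precludes any finite dominating set, since for a candidate finite $F \subseteq \link(a,b,c)$ I can pick $d_n$ with $n$ sufficiently large that its algebraic, and hence geometric, intersection with every element of $F$ exceeds $1$.

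The delicate step is the forward direction, specifically producing at most $8$ reference curves that dominate the enlarged link in $\onefine(T)$, including the new pants-pair vertices not present in the corresponding $\onetrans(T)$ link treated by Le~Roux--Wolff. I expect to handle this by choosing, for each of the three isotopy classes realized in $\link(a,b,c)$, two parallel copies of the corresponding curve in $\{a,b,c\}$ that together bound a thin annular neighborhood, and arguing via lifts to the universal cover that any $d$ homotopic to that class and meeting each of $a, b, c$ in at most one point must either sit entirely in the annulus (hence be adjacent to both references) or escape on one side (hence be adjacent to the reference on the opposite side).
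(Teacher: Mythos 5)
Your backward direction has a genuine gap at its final step. You conclude that a curve $d_n$ with $n$ large has ``algebraic, and hence geometric, intersection with every element of $F$'' exceeding one. But algebraic intersection numbers only see homology classes, and they stay equal to $1$ (or $0$) against precisely the elements of $F$ you most need to defeat. In case (A), if $a,b,c$ all have class $(1,0)$, your curves of class $(n,\pm 1)$ have algebraic intersection exactly $1$ with every curve of class $(1,0)$; so if $F$ contains a single curve homotopic to $a$ --- and $\link(a,b,c)$ contains many --- your argument gives no lower bound on its geometric intersection with $d_n$, and $d_n$ may simply be adjacent to it. The same failure occurs in cases (B) and (C): powers of a Dehn twist about an auxiliary curve $c'$ leave the pairing bounded against any $f$ whose class pairs trivially with $[c']$, for instance any $f$ homotopic to $c'$ itself. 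Non-adjacency in $\onefine(T)$ is a statement about actual intersection points, which homology cannot force beyond the geometric intersection number of the classes. This is exactly why the paper argues differently: it first produces a single $d\in\link(a,b,c)$ meeting \emph{every} component of $T\setminus(a\cup b\cup c)$ (Lemma~\ref{lerouxwolf25}, whose proof is where the new pants-pair configurations $(1,1,P)$, $(P,0,0)$, $(P,P,0)$, $(P,P,P)$ must be handled), and then isotopes $d$ within those components so that it crosses each prescribed curve at least twice (Lemma~\ref{lerouxwolff26}). That ``reach every complementary region, then wiggle'' step is the missing idea; it manufactures geometric intersections that no homological count can certify.

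Your forward direction also does not close. The observation that every vertex of $\link(a,b,c)$ of a necklace is homotopic to one of $a,b,c$ is correct and is a nice reduction, but the dichotomy ``$d$ sits in the thin annulus, or escapes on one side and is hence adjacent to the reference on the opposite side'' is false. A curve $d$ homotopic to $a$, disjoint from $a$, and crossing $b$ and $c$ once each can send small fingers across \emph{both} $a_+$ and $a_-$ (and across your references for the classes of $b$ and $c$) twice each, inside small disks disjoint from $a\cup b\cup c$; the result is still embedded, still lies in $\link(a,b,c)$, and is non-adjacent to all of your reference curves. So whatever finite set witnesses the forward direction, it is not certified by this annulus-trapping argument. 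The paper avoids the issue by importing Lemma 2.8 of Le~Roux--Wolff wholesale, noting that every edge of $\onetrans(T)$ persists in $\onefine(T)$; a self-contained proof would need to exploit how the at most three arcs of $d$ sit inside the complementary disks of the filling configuration $a\cup b\cup c$, not merely the homotopy class of $d$.
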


The forward direction of Lemma~\ref{property1} is proven by Le~Roux--Wolff and applies without adaptations. The key to the backward direction is an adaptation to Lemma 2.5 of Le~Roux--Wolff, which we give as follows.

\begin{lemma}[Adaptation of Lemma 2.5 of Le~Roux--Wolff]\label{lerouxwolf25}
    Let $(a,b,c)$ be a clique not of type necklace. Then, there exists $d\in\link(a,b,c)$ such that $d$ intersects every component of $T\setminus \{a,b,c\}.$
\end{lemma}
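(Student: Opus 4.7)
The plan is to prove the lemma by case analysis on the configuration of the clique $(a,b,c)$ in $\onefine(T)$, up to the action of $\Homeo(T)$. Since $(a,b,c)$ is not of type necklace, one of the following must hold: (i) all three pairs are torus pairs but they share a common intersection point $p \in T$, or (ii) at least one of the three pairs is either disjoint or a pants pair rather than a torus pair.

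A key torus-specific observation for case (ii) is that any two curves in $\onefine(T)$ that are disjoint or form a pants pair must be homotopic, because distinct primitive homotopy classes of simple closed curves on $T$ have nonzero algebraic intersection number. This rules out the configuration in which exactly one of the three pairs is a torus pair: the other two pairs would each consist of homotopic curves, and by transitivity of homotopy the remaining pair would be homotopic as well, contradicting the assumption that it is a torus pair. Thus case (ii) reduces to two sub-configurations: either exactly two of the three pairs are torus pairs while the third consists of homotopic curves (disjoint or pants), or no pair is a torus pair and $a, b, c$ are pairwise homotopic.

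For each configuration, I would first compute the number of components of $T \setminus (a \cup b \cup c)$ via an Euler characteristic calculation on the cellular decomposition of $T$ induced by $a \cup b \cup c$, and then produce the desired curve $d$ explicitly. A useful technique in these constructions is to allow $d$ to pass through a common intersection point of two of the three curves whenever such a point exists; such a crossing counts as only a single intersection with each of those two curves, leaving enough slack in the constraint $|d \cap a|, |d \cap b|, |d \cap c| \leq 1$ for $d$ to reach the remaining complementary regions.

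The main obstacle I anticipate is the common-point torus triple of case (i), where all three curves meet at a single point $p$ and the complement has exactly two components. Here $d$ must visit both components subject to the strict intersection budget. I plan to construct $d$ either by routing it through $p$ in a direction chosen from the cyclic ordering of the six strands at $p$ so that the two arcs of $d$ emerging from $p$ land in different complementary faces, or, when the cyclic ordering obstructs this, by letting $d$ meet $a$, $b$, $c$ transversally at three distinct points selected so that consecutive arcs of $d$ alternate between the two complementary regions. The remaining sub-configurations of case (ii) admit analogous explicit constructions, which I would verify with a picture in each case.
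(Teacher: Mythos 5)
Your overall strategy matches the paper's: classify cliques by the triple of pairwise intersection types, use the torus-specific fact that disjoint and pants pairs consist of homotopic curves to rule out triples with exactly one torus pair, and then exhibit $d$ explicitly in each remaining configuration. (The paper actually delegates the types $(1,1,1)$, $(1,1,0)$, $(0,0,0)$ to Le~Roux--Wolff and only treats the new configurations involving pants pairs, namely $(1,1,\mathrm{P})$ and the variants of $(0,0,0)$; you propose to redo everything from scratch, which is fine in principle.)

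However, both constructions you offer for the case you correctly single out as the hard one --- three mutual torus pairs through a common point $p$, with two complementary faces --- fail. First, if $d$ passes through $p$ and meets $a\cup b\cup c$ nowhere else, then $d\setminus\{p\}$ is a \emph{connected} arc in $T\setminus(a\cup b\cup c)$ and therefore lies in a single face; its two ends cannot ``land in different complementary faces,'' and $d$ misses the other component entirely (note $p$ itself is not a point of $T\setminus\{a,b,c\}$). Second, the three-transverse-points alternative is obstructed by parity: in this configuration each of $a$, $b$, $c$ is an edge of the induced cell structure having one of the two faces on each side, so every crossing of $d$ with $a\cup b\cup c$ switches faces, and a closed curve transverse to the union must cross it an \emph{even} number of times; three transverse crossings cannot close up. The case is nonetheless easy to handle --- take $d$ to be a small parallel pushoff of one of the three curves, say $c$: it is disjoint from $c$, crosses each of $a$ and $b$ exactly once, and its two complementary arcs lie one in each face --- but as written your proposal does not contain a working construction for this configuration, so you should replace both suggested routes by this (or an equivalent) one. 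The remaining configurations, including $(1,1,\mathrm{P})$ with three distinct intersection points (where the complement has three components and $d$ must cross all of $a$, $b$, $c$), still require the explicit pictures you promise; be aware that the same ``count the faces, then check the intersection budget and parity'' bookkeeping is exactly where errors hide.
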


This proof is done via casework on the possible arrangements of cliques. To aid in categorizing arrangements of cliques, we use an updated version of the notation of Le~Roux--Wolff: for a clique $(a,b,c)$, we record pairwise intersection types by a triple $(\cdot,\cdot,\cdot)$, up to permutation. If two curves are disjoint, we signify this by a 0; if they form a torus pair, by a 1; and if they form a pants pair, by a P. 

\begin{proof}[Proof of Lemma~\ref{lerouxwolf25}]
    The cases (1,1,1), (1,1,0), and (0,0,0) are all accounted for in Le~Roux--Wolff; (1,0,0) does not apply in the torus case. The remainder of the cases arise from replacing some 0's with P's.

    \pit{Case 1: (1,1,P)} In this case, we have two homotopic, touching curves and a third curve that forms a torus pair with them. There are two subcases: whether the three curves intersect at one point or at 3 distinct points.

    \textit{Case 1a: one point of intersection.} To create a fourth curve adjacent to all three that intersects all components of $T\setminus\{a,b,c\}$, we push the third curve off of itself. A schematic of this configuration is shown in Figure \ref{figure1a}. 

\begin{figure}[h]
    \begin{center}
        \begin{tikzpicture}
        \node[anchor = south west, inner sep = 0] at (0,0){\includegraphics[width=2in]{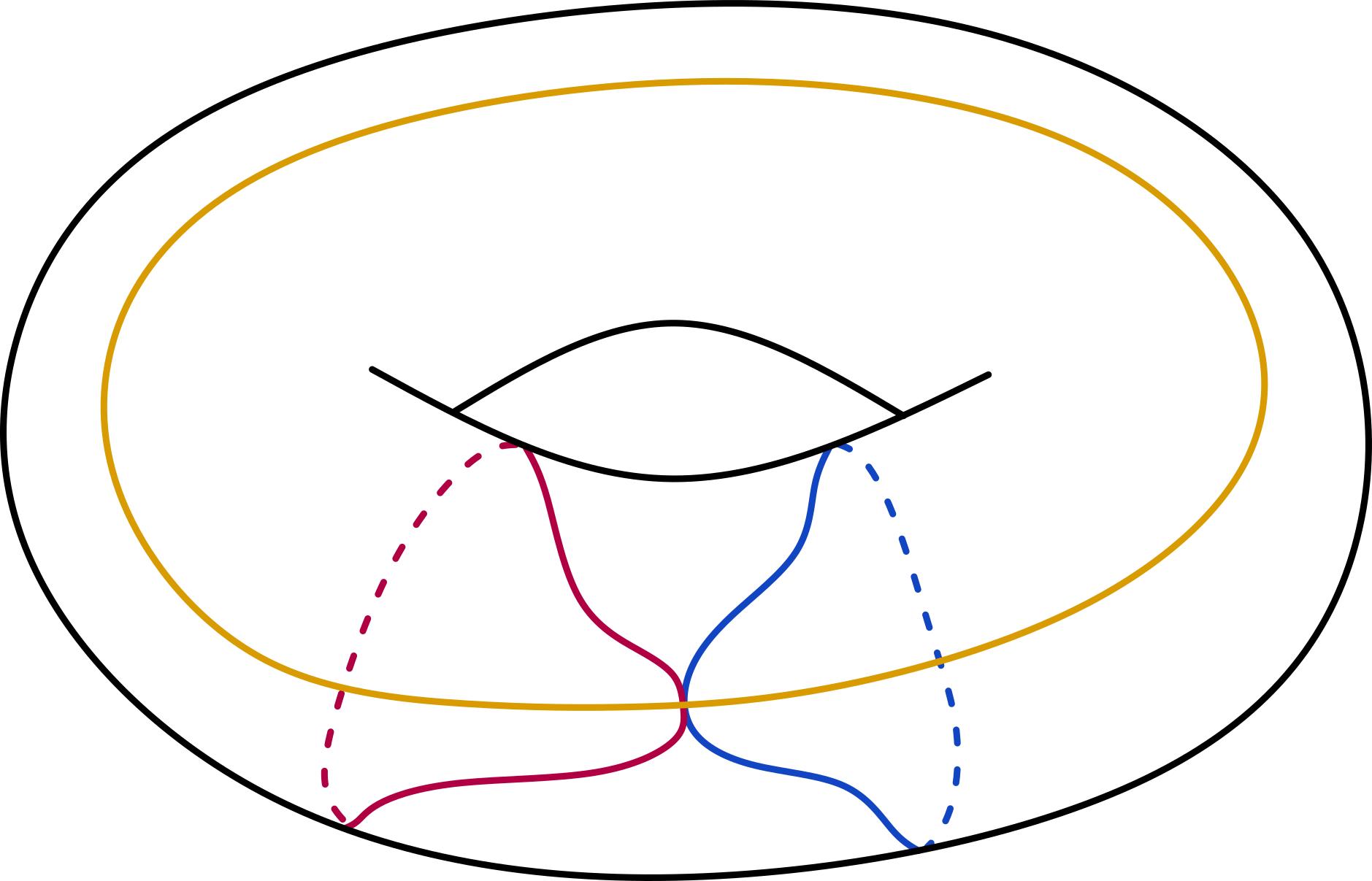}};
        \node at (2,1) {$a$};
        \node at (3, 1) {$b$};
        \node at (0.7,2) {$c$};
         \node[anchor = south west, inner sep = 0] at (6,0){\includegraphics[width=2in]{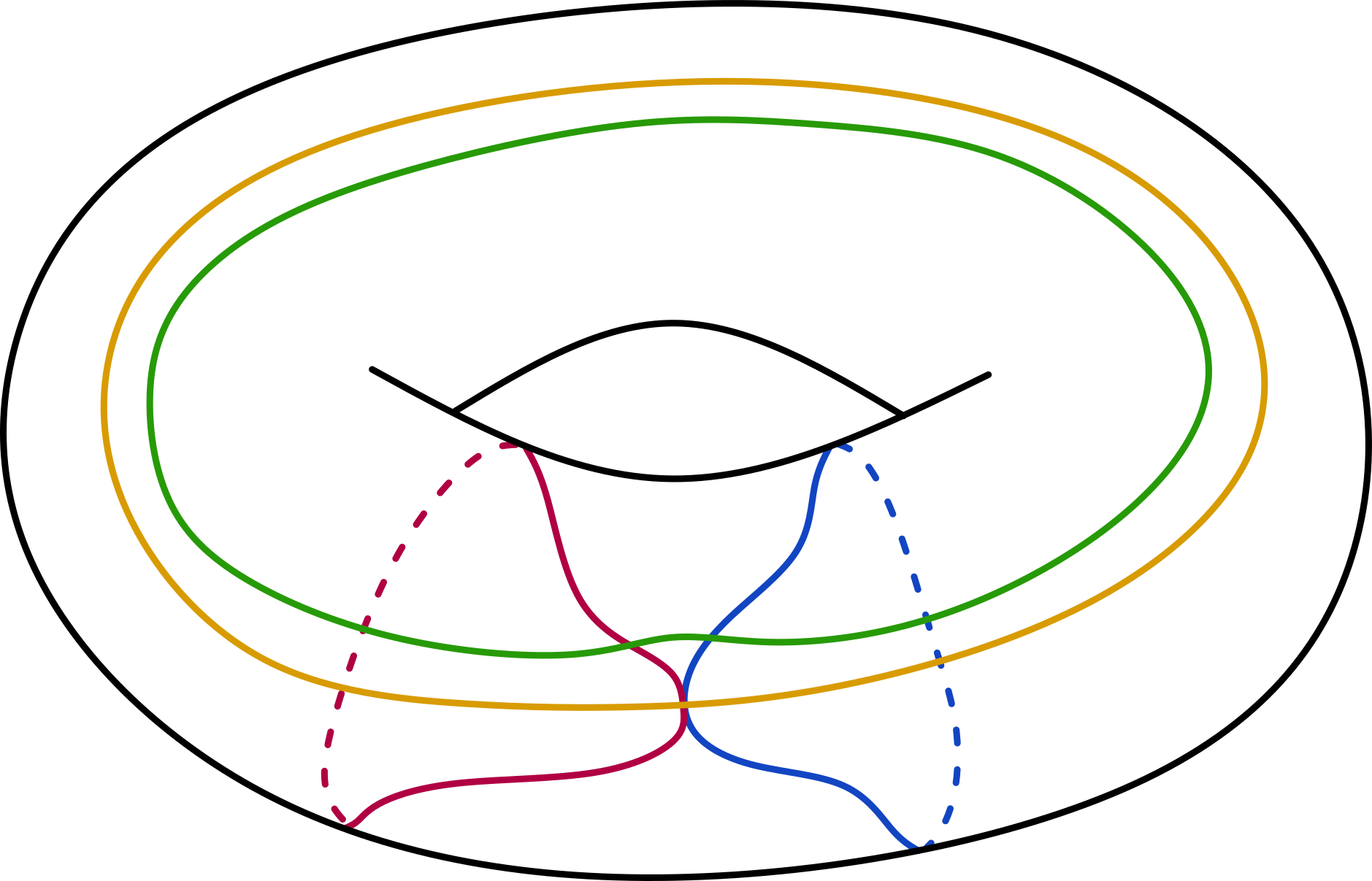}};
         \node at (6.8,2) {$d$};
        \end{tikzpicture}
        \caption{A schematic of Case 1a of Lemma~\ref{lerouxwolf25}}\label{figure1a}
    \end{center}
    \end{figure}

    \textit{Case 1b: three points of intersection.} The difficulty here is that $T\setminus\{a,b,c\}$ has 3 connected components, so pushing the third curve will no longer work. In particular, a curve that satisfies the conditions of the lemma must form a torus pair with each of $a,$ $b,$ and $c.$ A schematic of this configuration is shown in Figure \ref{figure1b}, along with a curve $d$ that satisfies the condition of the lemma.
\begin{figure}[h]
    \begin{center}
    \begin{tikzpicture}
        \node[anchor = south west, inner sep = 0] at (0,0){\includegraphics[width=2in]{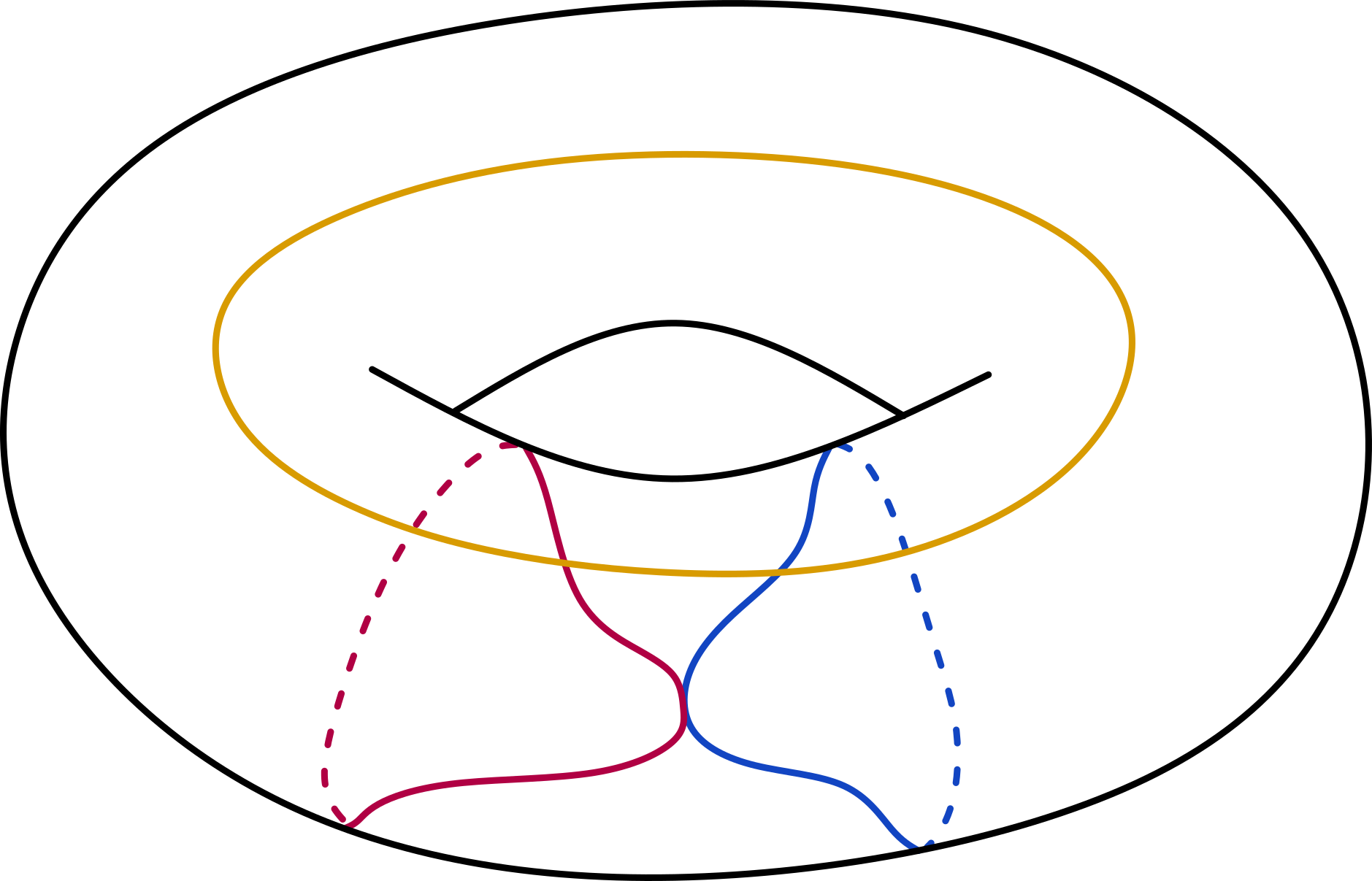}};
        \node at (2,0.6) {$a$};
        \node at (3, 0.6) {$b$};
        \node at (0.6,2) {$c$};
         \node[anchor = south west, inner sep = 0] at (6,0){\includegraphics[width=2in]{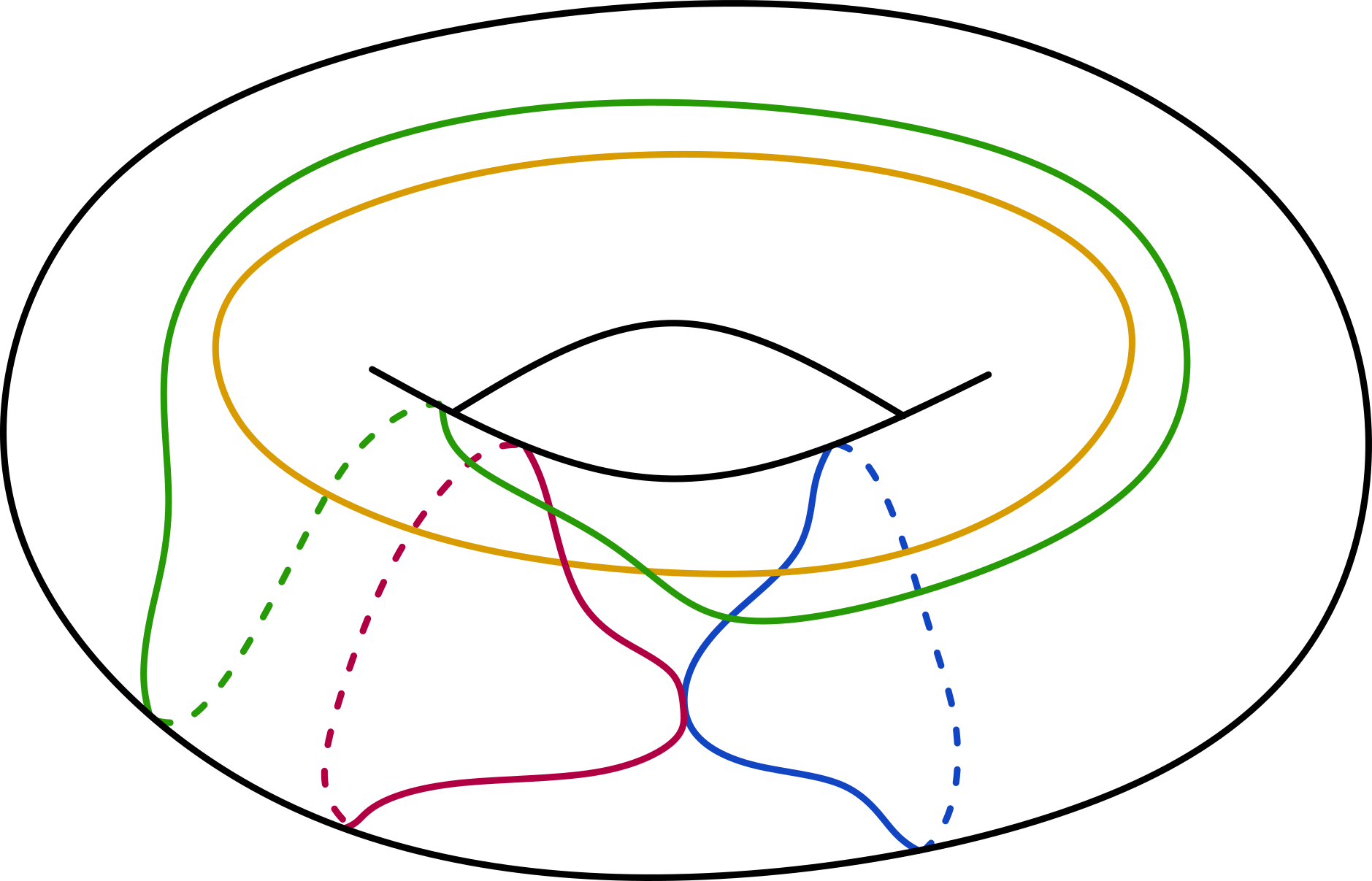}};
         \node at (6.5,2) {$d$};
        \end{tikzpicture}
        \caption{A schematic of Case 1b of Lemma~\ref{lerouxwolf25}}\label{figure1b}
    \end{center}
    \end{figure}

    \pit{Case 2: variants of (0,0,0): (P,0,0), (P,P,0), and (P,P,P)} In all of these configurations, all of $a,b,c$ are homotopic, and thus form (possibly pinched) annuli. A curve $d$ transverse to $a,b,$ and $c$ that does not cross any of the touching points satisfies the hypotheses of the lemma. A schematic of these configurations, along with a curve $d$ that satisfies the hypotheses of the lemma, is shown in Figure \ref{figurecase2}.
\end{proof}

\begin{figure}[h]
\begin{center}
    \begin{tabular}{|c|c|c|}
        \hline
        \raisebox{-.2em}{\textbf{(P,0,0)}} & \raisebox{-.2em}{\textbf{(P,P,0)}} & \raisebox{-.2em}{\textbf{(P,P,P)}} \\[.5em]
        \hline
        & & \\[-.8em]
        \raisebox{-.3em}{\includegraphics[width=1.15in]{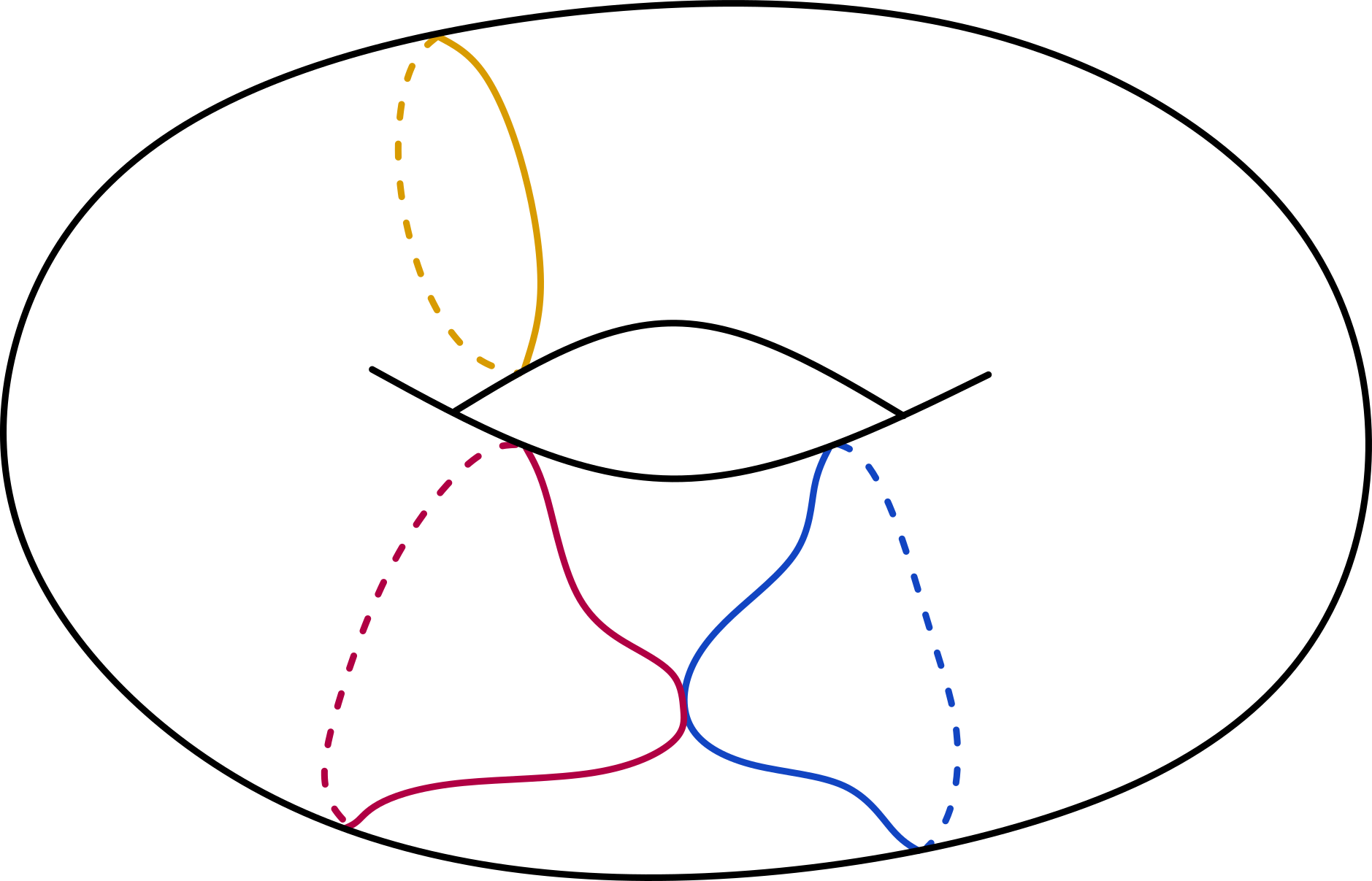}} & \raisebox{-.2em}{\includegraphics[width=1.15in]{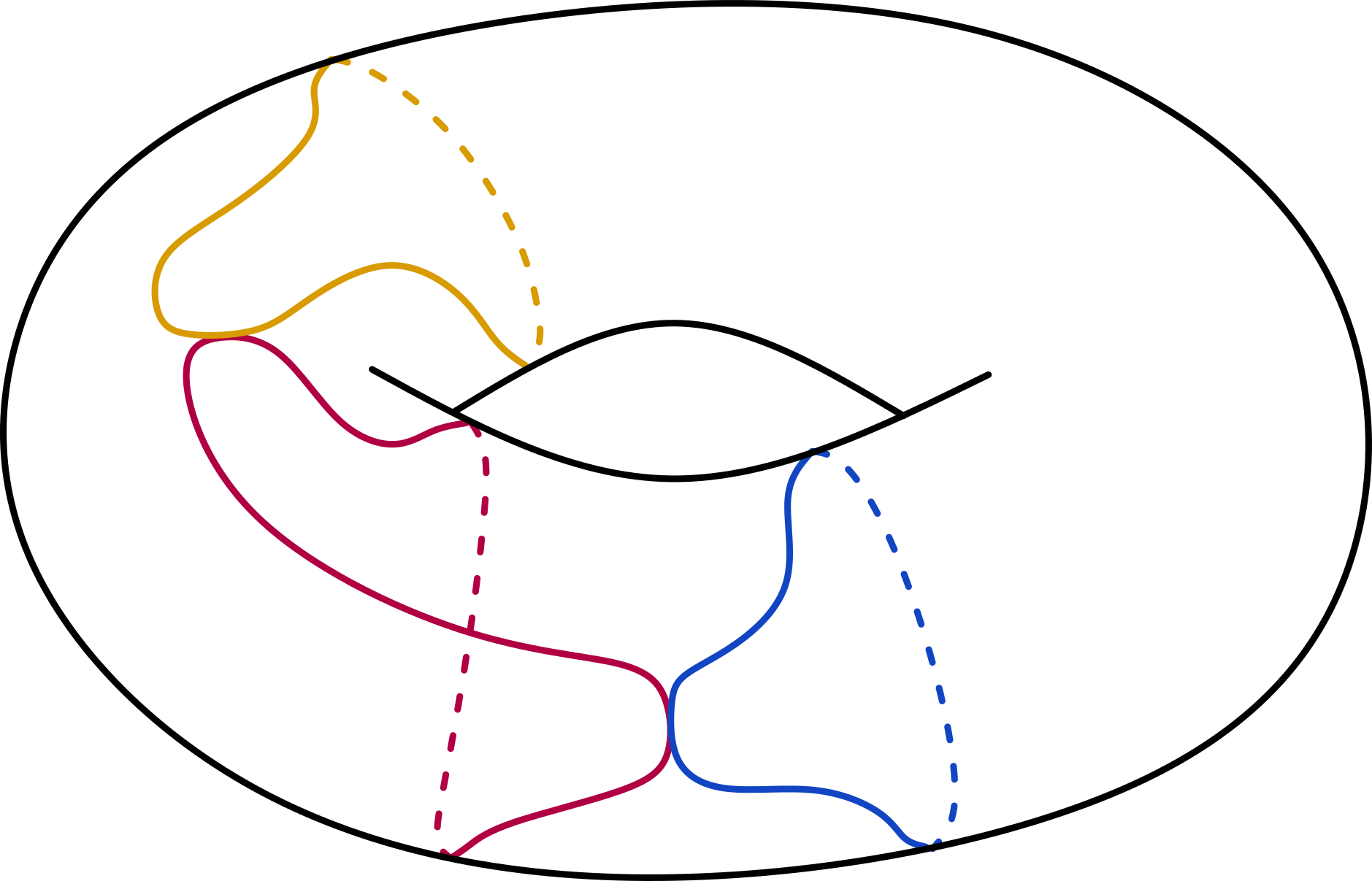}} & \raisebox{-.2em}{\includegraphics[width=1.15in]{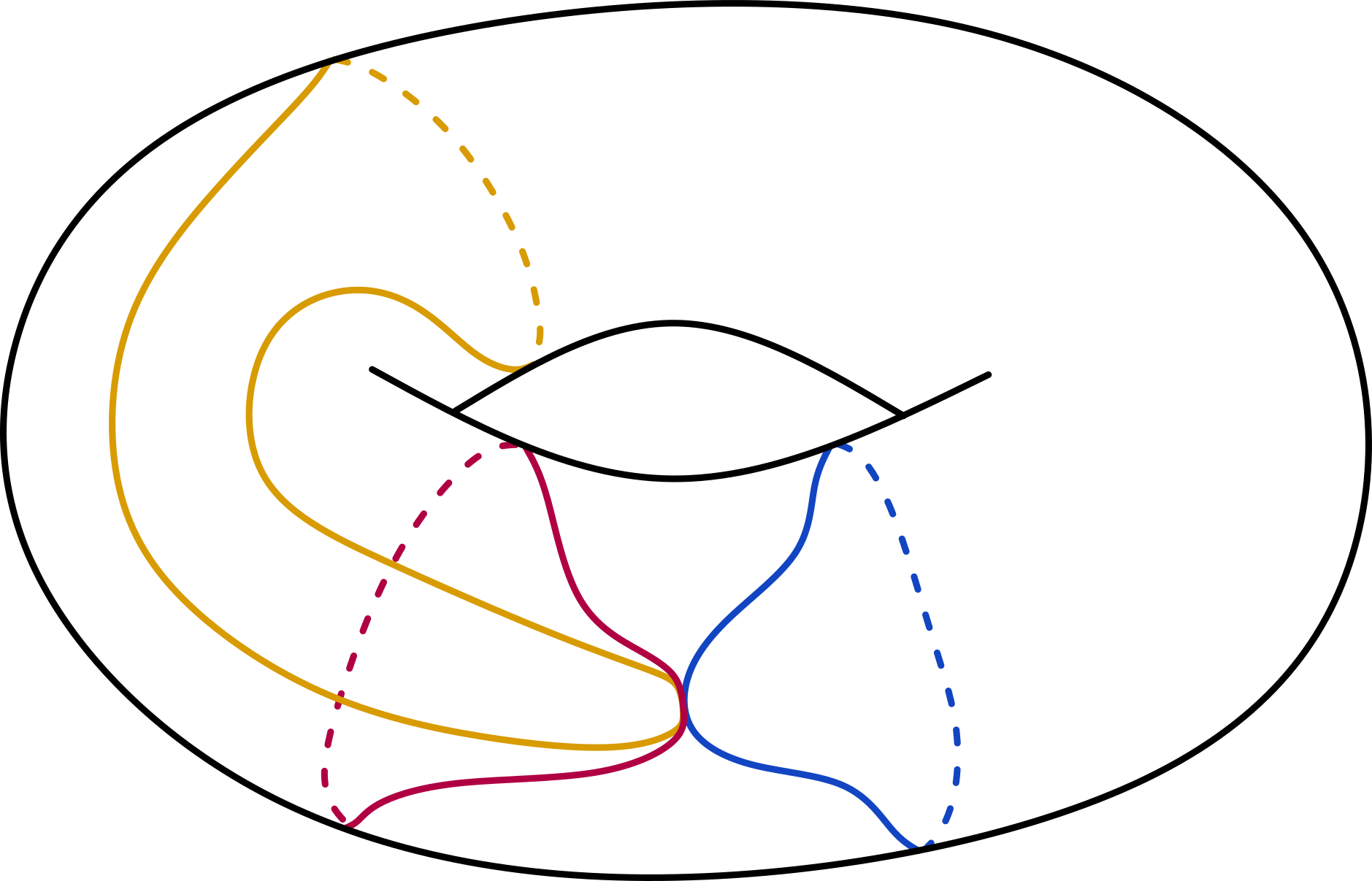}} \hspace{.05em} \raisebox{-.2em}{\includegraphics[width=1.15in]{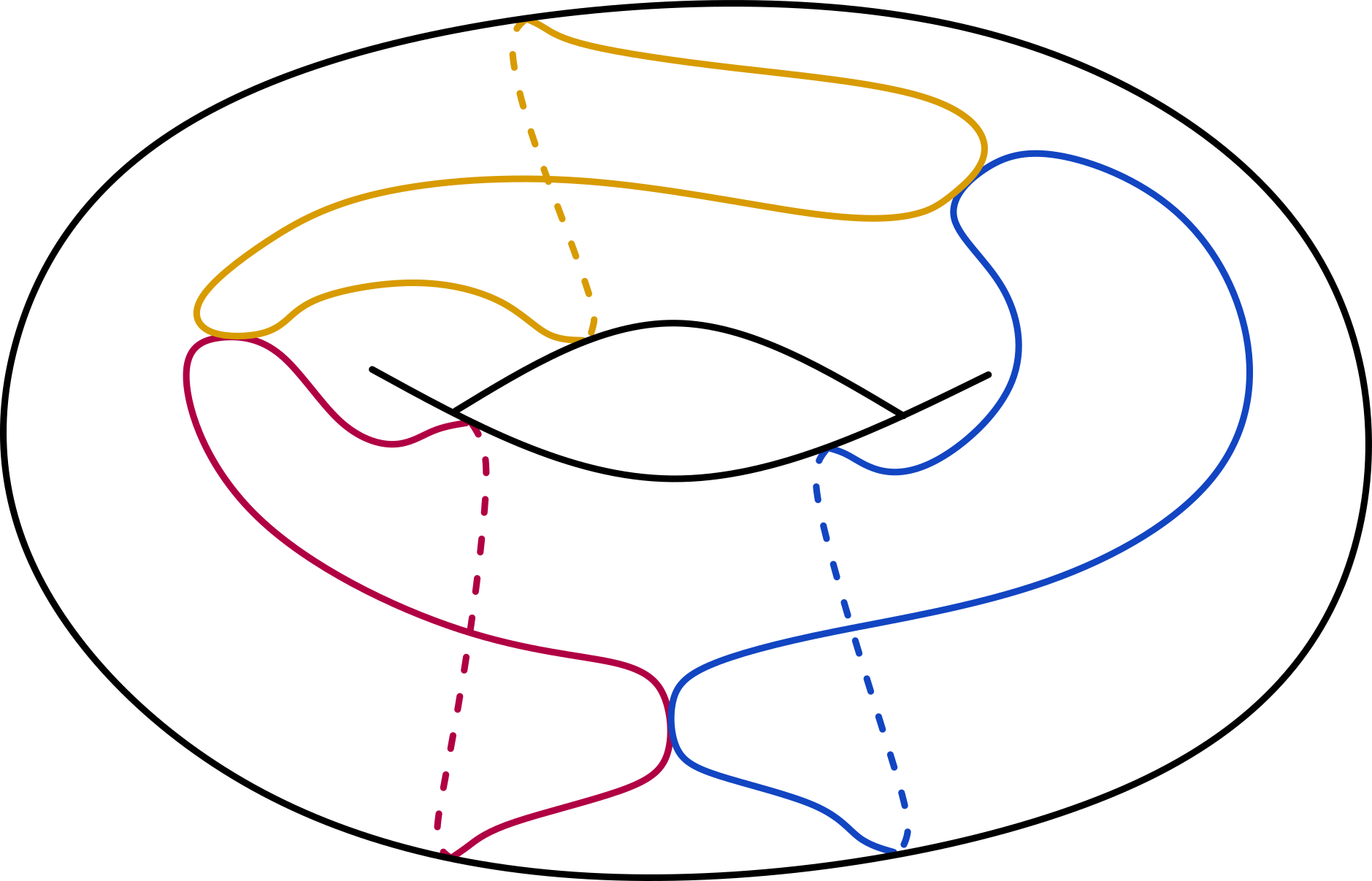}} \\[.3em]
        $\downarrow$ & $\downarrow$ & $\downarrow$ \hspace{1in} $\downarrow$ \\
        & & \\[-.9em]
        \raisebox{-.2em}{\includegraphics[width=1.15in]{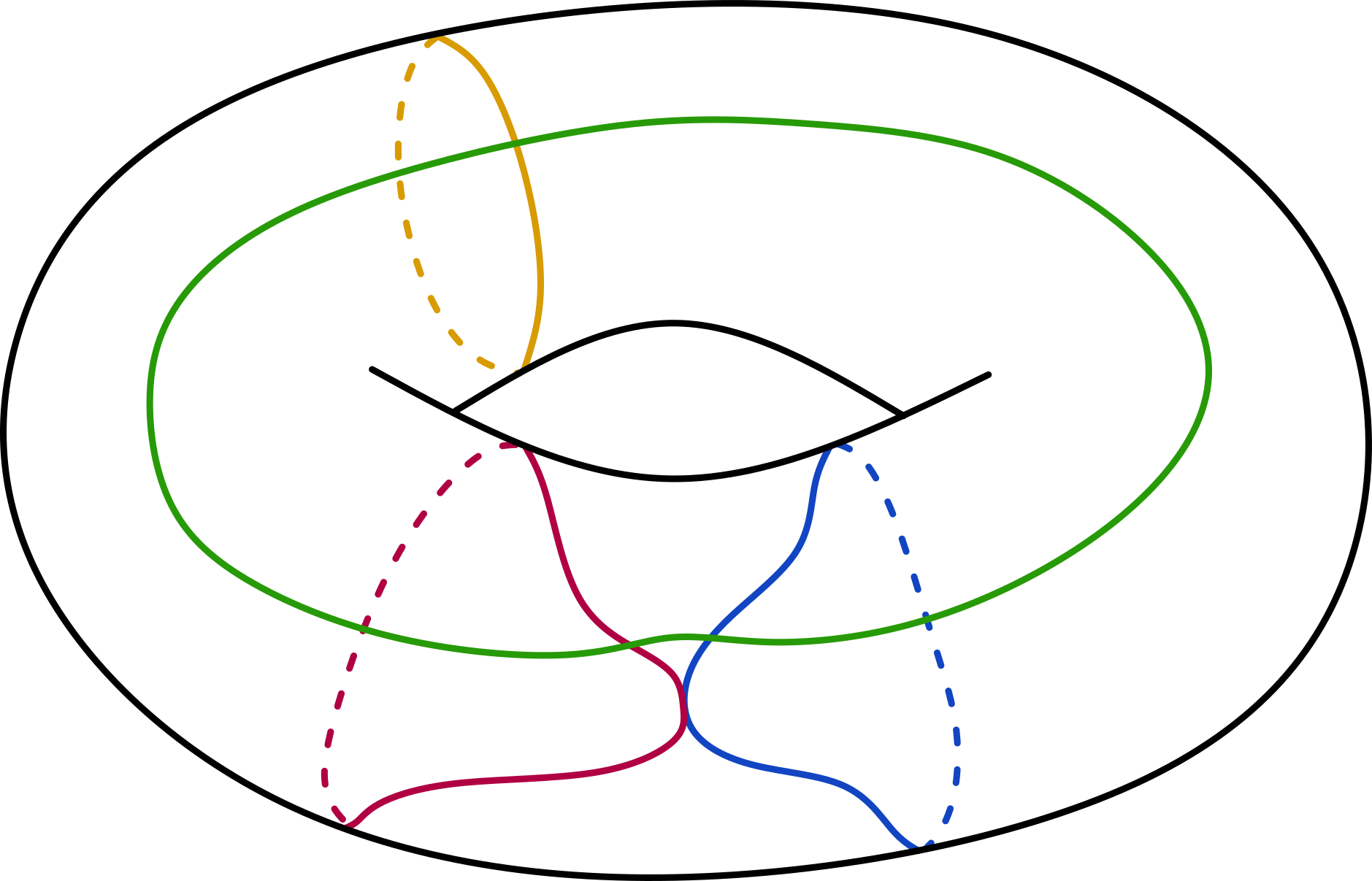}} & \raisebox{-.2em}{\includegraphics[width=1.15in]{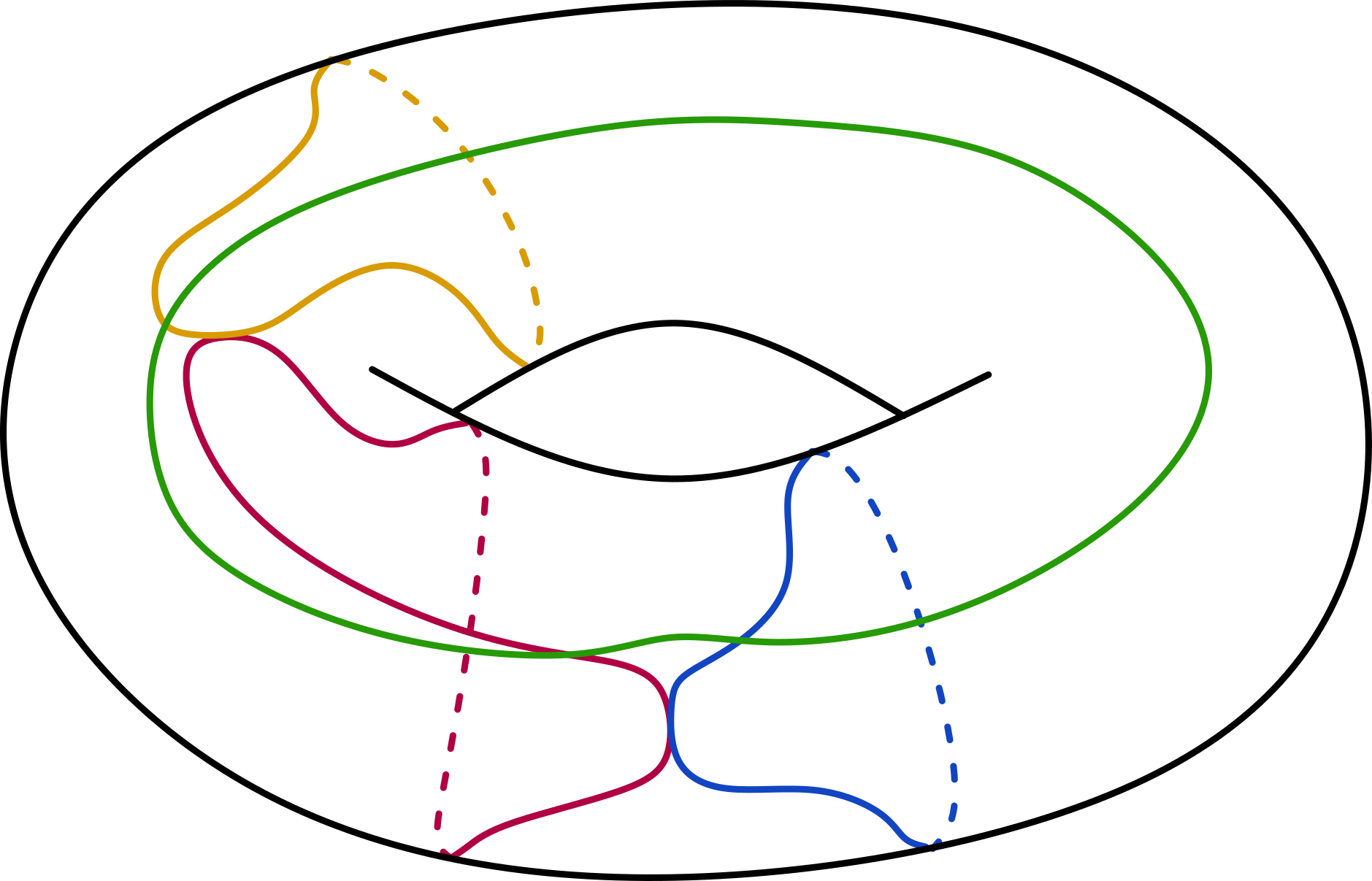}} & \raisebox{-.2em}{\includegraphics[width=1.15in]{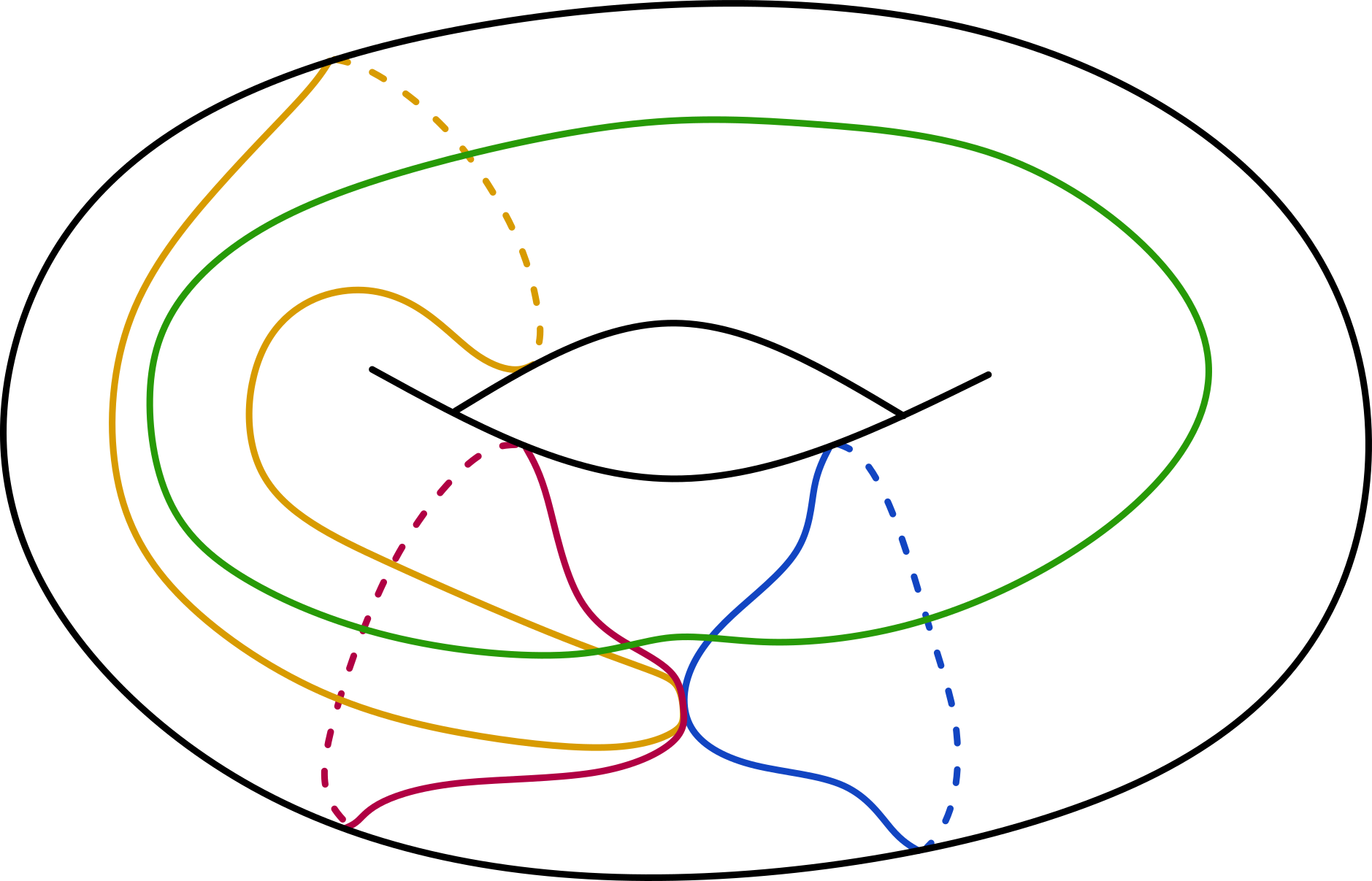}} \hspace{.05em} \raisebox{-.2em}{\includegraphics[width=1.15in]{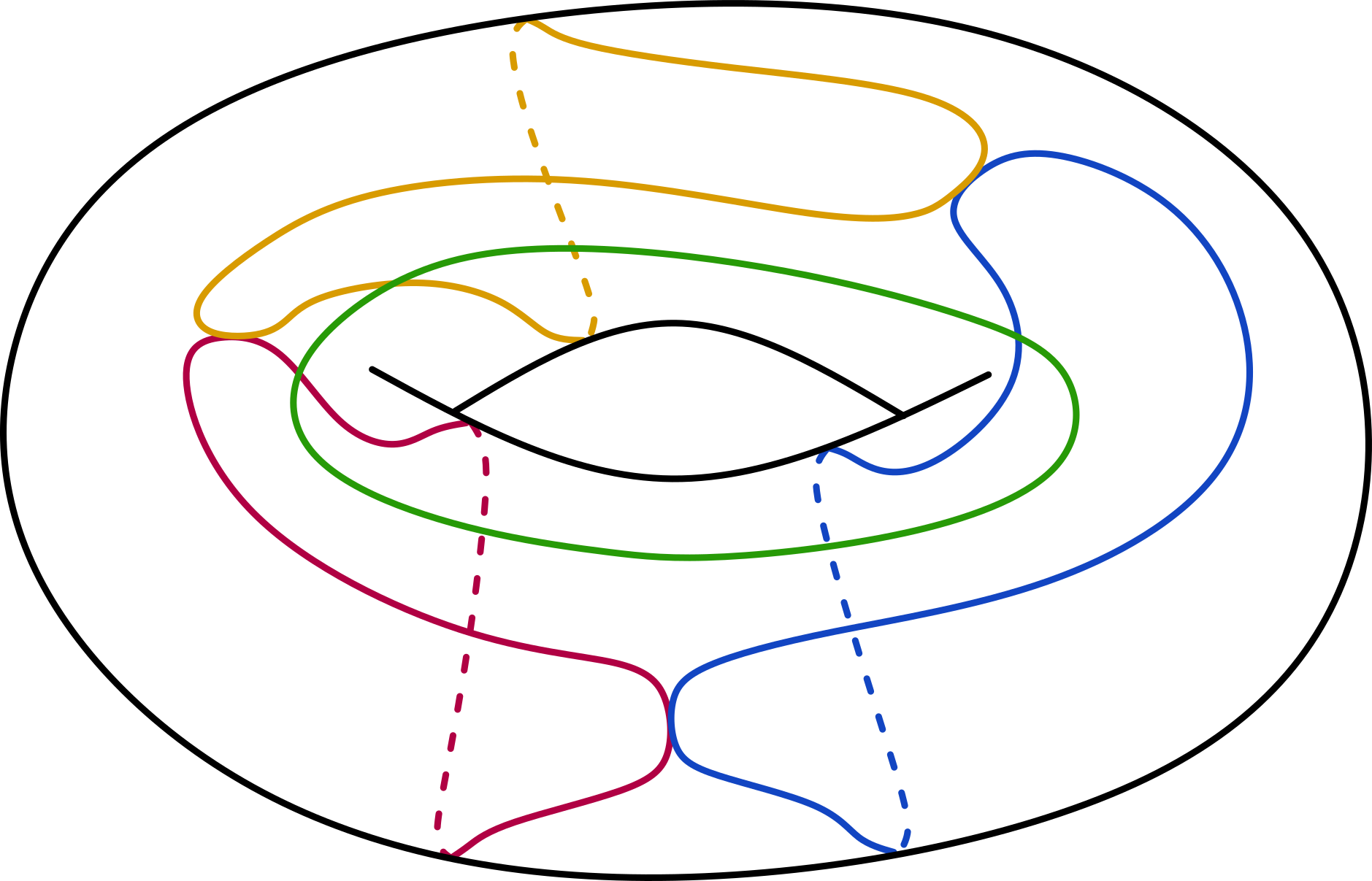}} \\[.3em]
        \hline
    \end{tabular}
    \caption{A schematic of Case 2 of Lemma~\ref{lerouxwolf25}}\label{figurecase2}
\end{center}
\end{figure}

Then, Lemma 2.6 of Le~Roux--Wolff holds with minor adaptations and an identical proof; here, it appears as Lemma~\ref{lerouxwolff26}. It provides the converse to Lemma~\ref{property1}.

\begin{lemma}\label{lerouxwolff26}
    Let $(a,b,c)$ be a clique in $\onefine(T)$ not of type necklace and $\{\alpha_1,\ldots,\alpha_j\}$ be a finite collection of curves distinct from $a,\ b,$ and $c.$ Then, there is a vertex $d\in\link(a,b,c)$ that is not adjacent to any $\alpha_i.$
\end{lemma}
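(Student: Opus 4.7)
The proof I have in mind builds directly on Lemma~\ref{lerouxwolf25}: first apply that lemma to produce a curve $d_0 \in \link(a,b,c)$ meeting every connected component of $T \setminus (a \cup b \cup c)$, then alter $d_0$ locally inside those components to force $|d \cap \alpha_i| \geq 2$ for every $i$, which is the negation of adjacency in $\onefine(T)$. Since each alteration will be supported in the open complement $T \setminus (a \cup b \cup c)$, it will not change the number of intersections of the curve with $a$, $b$, or $c$, and so the resulting $d$ will automatically remain in $\link(a,b,c)$.

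For a fixed index $i$, the curve $\alpha_i$ is a simple closed curve distinct from $a$, $b$, and $c$, so it is not contained in $a \cup b \cup c$; hence it meets at least one component $C_i$ of $T \setminus (a \cup b \cup c)$ in a nontrivial arc. Lemma~\ref{lerouxwolf25} guarantees that $d_0$ also enters $C_i$, so I can choose a small embedded disk $D_i \subset C_i$ meeting both an arc of $d_0$ and an arc of $\alpha_i$. Inside $D_i$ I modify $d_0$ by attaching a finger that crosses $\alpha_i$ transversely in two new points and returns to $d_0$. This finger contributes exactly two new intersections with $\alpha_i$, no new intersections with $a$, $b$, or $c$ (since $D_i \cap (a \cup b \cup c) = \emptyset$), and preserves simplicity and essentialness of the curve.

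To handle all $\alpha_i$ at once, I would choose the disks $D_1,\dots,D_j$ pairwise disjoint, which is possible since each $C_i$ is open and two-dimensional and each disk can be shrunk arbitrarily. Skipping any index for which $d_0$ already meets $\alpha_i$ in at least two points, the simultaneously modified curve $d$ lies in $\link(a,b,c)$ and satisfies $|d \cap \alpha_i| \geq 2$ for every $i$, so $d$ is not adjacent to any $\alpha_i$ in $\onefine(T)$.

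The main obstacle is really only bookkeeping: confirming that the finger modifications can be made disjointly and that each finger both achieves its intersection goal with $\alpha_i$ and leaves the intersection numbers with $a$, $b$, and $c$ unchanged. Both issues dissolve because the modifications are confined to small disks in the interior of the complement $T \setminus (a \cup b \cup c)$, where there is no interaction with $a \cup b \cup c$; the essentialness of $d$ also follows because the finger is a compactly supported perturbation of $d_0$, which is already essential.
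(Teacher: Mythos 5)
Your argument is essentially the paper's: the paper likewise starts from the curve supplied by Lemma~\ref{lerouxwolf25} and isotopes it within $T\setminus(a\cup b\cup c)$ to meet each $\alpha_i$ at least twice, and your finger-move bookkeeping simply makes that one-sentence sketch explicit. The only detail worth polishing is that the arc of $d_0$ and the arc of $\alpha_i$ in the common component need not lie in one small disk, so the finger should run along a path in that component rather than sit inside a single $D_i$; this changes nothing substantive.
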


The main idea of the proof is to start with a curve $d$ given by Lemma~\ref{lerouxwolf25} and isotope it in $T\setminus\{a,b,c\}$ to intersect each $\alpha_j$ arbitrarily many times.

We now have all the tools we need to prove Lemma~\ref{property1}.

\begin{proof}[Proof of Lemma~\ref{property1}]
    The forward direction is given by Lemma 2.8 of Le~Roux--Wolff. The proof applies because all edge relations in $\onetrans(T)$ still exist in $\onefine(T).$
    
    The reverse direction follows from Lemma~\ref{lerouxwolff26}.
\end{proof}

It remains to show properties ($2'$) and ($3'$) above. The following lemma is similar to Corollary 2.4 of Le~Roux--Wolff.

\begin{lemma}\label{properties2and3}
    In $\onefine(T),$ the following properties hold.
    \begin{enumerate}[label=(\arabic*)]
        \item [($2'$)] Adjacent vertices $a$ and  $b$ are disjoint or a pants pair if and only if there is no $c\in\link(a,b)$ such that $(a,b,c)$ is of type necklace and
        \item  [($3'$)] adjacent vertices $a$ and $b$ are a torus pair if and only if $a$ and $b$ are neither disjoint nor a pants pair.
    \end{enumerate}
\end{lemma}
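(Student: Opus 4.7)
I would split the lemma into three pieces. Property ($3'$) is essentially tautological: adjacency in $\onefine(T)$ forces $|a\cap b|\le 1$, which leaves only the three mutually exclusive possibilities---disjoint pair, pants pair, or torus pair---so $(a,b)$ is a torus pair if and only if it is neither of the other two. A single-sentence argument suffices.

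The forward direction of ($2'$) is also immediate from the definitions: a necklace $(a,b,c)$ requires each of $(a,b)$, $(b,c)$, $(c,a)$ to be a torus pair, so if $(a,b)$ is disjoint or forms a pants pair then no $c\in\link(a,b)$ can complete a necklace.

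The only substantive content is the backward direction of ($2'$): given a torus pair $(a,b)$ on $T$, I need to exhibit some $c\in\link(a,b)$ with $(a,b,c)$ a necklace. Since $a$ and $b$ cross once transversely they fill $T$, so after applying a homeomorphism of $T$ I may assume $T=\mathbb{R}^2/\mathbb{Z}^2$ with $a$ the image of the $x$-axis, $b$ the image of the $y$-axis, and $a\cap b=\{p\}$ where $p=(0,0)$. I would then take $c$ to be the image of the line $\{y=x-\tfrac12\}$ in $T$: this is a simple closed geodesic that meets $a$ transversely in the single point $q=(\tfrac12,0)$ and meets $b$ transversely in the single point $r=(0,\tfrac12)$. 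Thus $c\in\link(a,b)$, each of the three pairs among $\{a,b,c\}$ is a torus pair, and since $p,q,r$ are pairwise distinct the three curves have no common intersection point. Hence $(a,b,c)$ is a necklace, as required.

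The only mild subtlety I foresee is justifying the reduction to standard position, but this is a standard change-of-coordinates argument, relying on the fact that a torus pair on $T$ has geometric intersection number one and therefore fills $T$, so that $\Homeo(T)$ acts transitively on torus pairs. I do not expect this to pose a genuine obstacle.
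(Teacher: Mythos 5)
Your proposal is correct and follows essentially the same route as the paper: the forward direction of ($2'$) and property ($3'$) from the definitions, and the backward direction by normalizing a torus pair to the $(1,0)$ and $(0,1)$ curves via a homeomorphism and completing the necklace with a $(1,1)$ curve avoiding $a\cap b$. Your explicit choice of the translate $\{y=x-\tfrac12\}$ just makes precise the paper's "some $(1,1)$ curve."
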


\begin{proof}
    If adjacent curves $a$ and $b$ are disjoint or form a pants pair, then by definition, there is no third curve $c$ such that $(a,b,c)$ is a necklace clique. 
    
    Alternately, if $a$ and $b$ are a torus pair, then up to homeomorphism, they are the (1,0) and (0,1) curves on the torus. These curves, along with some (1,1) curve, form a necklace clique. A schematic of this configuration appears in Figure \ref{torusfigure}.

    Property ($3'$) follows from the definitions of torus pairs.
\end{proof}

\begin{figure}[h]
\begin{center}
    \begin{tikzpicture}
    \node[anchor = south west, inner sep = 0] at (0,0){\includegraphics[width=1.75in]{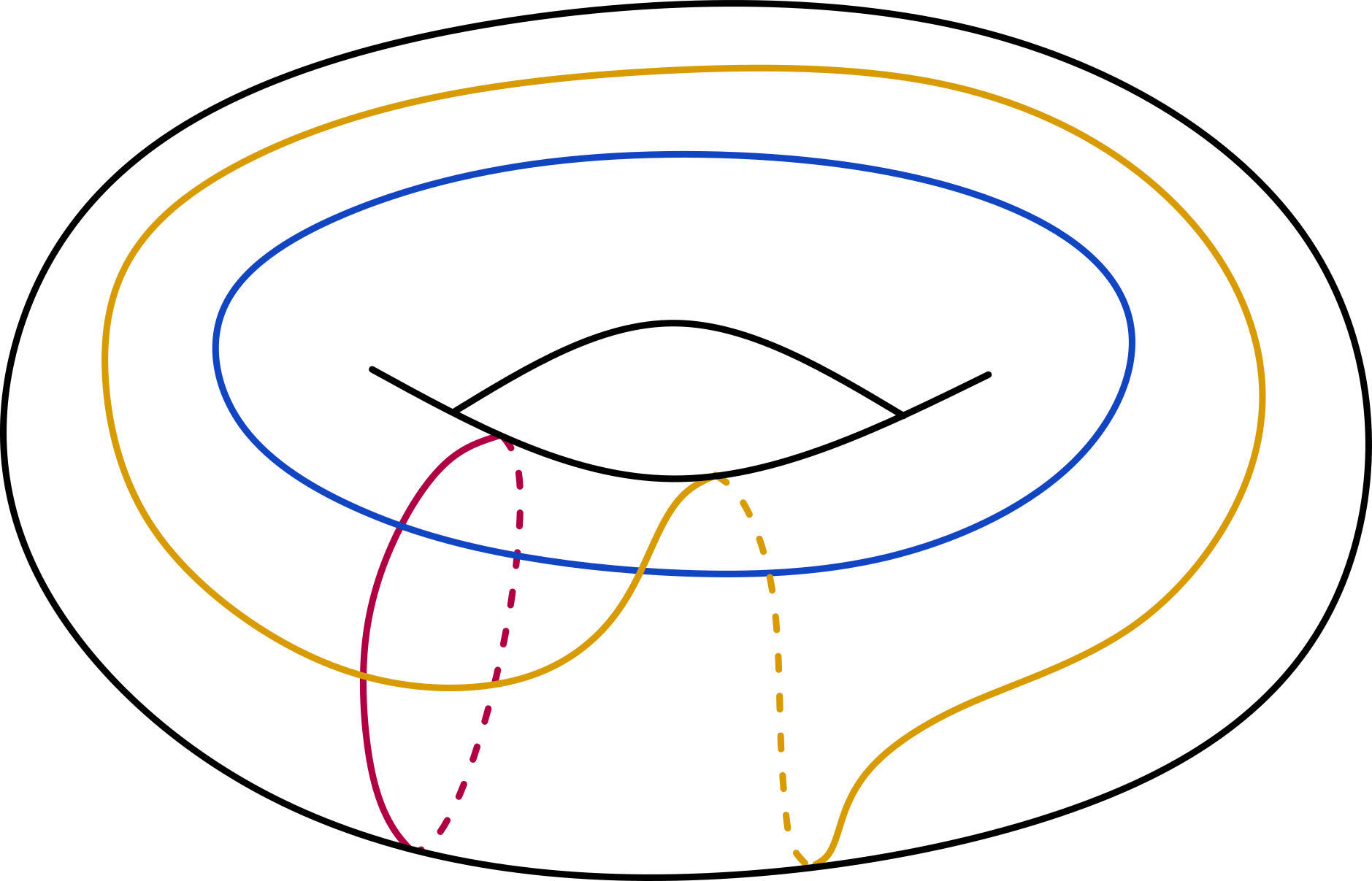}};
    \node at (1.05,0.45) {$a$};
    \node at (0.9,1.8) {$b$};
    \node at (3.2,0.45) {$c$};
    \end{tikzpicture}
\end{center}
\caption{A clique of type necklace}\label{torusfigure}
\end{figure}

\begin{proof}[Proof of Proposition~\ref{prop:torustorusvselse}]
    By Lemma~\ref{properties2and3}, $(u,v)$ is a torus pair if and only if there is a curve $w$ such that $(u,v,w)$ are of type necklace. Lemma~\ref{property1} asserts that the property of being of necklace type is indeed combinatorial, and therefore is preserved by automorphisms of $\onefine(T)$.

    We conclude that torus pairs are preserved by automorphisms of $\onefine(T).$
\end{proof}

\subsection{Pants pairs vs. disjoint pairs}\label{sec:toruspantsvsdisjoint}

It remains to show that we can distinguish pants pairs from disjoint pairs in $\onefine(T).$ We will do this in the following proposition.

\begin{proposition}\label{toruspantsvsdisjoint}
    Let $u$ and $v$ be adjacent curves in $\onefine(T)$ and $\varphi\in \Aut \onefine(T)$. Then $(u,v)$ is a pants pair if and only if $(\varphi(u),\varphi(v))$ is a pants pair.
\end{proposition}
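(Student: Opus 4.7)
My plan is to emulate the strategy of Section~\ref{sec:homnonseppants} for the torus. By Proposition~\ref{prop:torustorusvselse}, automorphisms preserve torus pairs and hence the class of non-torus-pair adjacencies. On the torus, any two non-homotopic essential simple closed curves must intersect transversely (and thus cannot form a non-torus-pair adjacency), so every non-torus-pair adjacency consists of two homotopic curves. This reduces the problem to distinguishing pants pairs from disjoint pairs among homotopic adjacent pairs.

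I would then define $\ann_{\leq 1}(u, v)$ on the torus combinatorially as the set $\{w \in \link(u, v) : (u, w) \text{ and } (v, w) \text{ are not torus pairs}\}$, which is invariant under $\Aut\onefine(T)$ by Proposition~\ref{prop:torustorusvselse}. Topologically, when $(u, v)$ is a pants pair this set consists of essential simple closed curves supported in the annular component of $T \setminus (u \cup v)$ that meet $u \cup v$ in at most one point; when $(u, v)$ is disjoint it collects such curves from either of the two annular components of $T \setminus (u \cup v)$. I would then prove the following characterization, modeled on Lemma~\ref{nsdisjointhom}: the pair $(u, v)$ is a pants pair if and only if there exists $\gamma \in \link(u, v)$ with $(u, \gamma)$ and $(v, \gamma)$ both torus pairs and $\gamma$ adjacent in $\onefine(T)$ to every $\delta \in \ann_{\leq 1}(u, v)$. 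The pants direction is witnessed by a $\gamma$ passing through $p = u \cap v$ transverse to both $u$ and $v$; such a $\gamma$ lies (away from $p$) inside the pinched annular component and intersects curves supported there in one point. The disjoint direction exploits that any candidate $\gamma$ has arcs in both annular components, and sufficiently wiggly core curves of either annulus furnish $\delta \in \ann_{\leq 1}$ with $|\gamma \cap \delta| \geq 2$.

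Combining this characterization with Proposition~\ref{prop:torustorusvselse} immediately yields Proposition~\ref{toruspantsvsdisjoint}. The main obstacle is the pants direction: verifying that the chosen $\gamma$ is adjacent to every element of $\ann_{\leq 1}(u, v)$, including potentially irregular representatives in the pinched annulus (for example, wiggly core curves that meet a transverse arc in more than one point). Handling this carefully may require a refinement of the combinatorial definition of $\ann_{\leq 1}(u, v)$, for instance by imposing additional torus-pair-based adjacency conditions that exclude the most pathological representatives while preserving automorphism-invariance.
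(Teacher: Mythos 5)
Your reduction to homotopic pairs is correct (on $T$, non-homotopic essential simple closed curves have nonzero algebraic intersection number, so a single intersection point must be a crossing), and your set $\{w\in\link(u,v): (u,w)\ \text{and}\ (v,w)\ \text{not torus pairs}\}$ is indeed automorphism-invariant by Proposition~\ref{prop:torustorusvselse}; it is exactly the homotopic link $\link^{\text{hom}}(u,v)$ used in the paper. However, the characterization you build on it is false, and the ``obstacle'' you flag at the end is fatal rather than technical. When $(u,v)$ is a pants pair with $u\cap v=\{p\}$, the complement $T\setminus(u\cup v)$ has two components: a pinched disk (every essential curve in its closure must pass through $p$) and an honest open annulus $B$ whose closure is pinched at $p$. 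Any $\gamma$ forming torus pairs with both $u$ and $v$ either passes through $p$, in which case $\gamma\setminus\{p\}$ is an essential arc of $B$ (both local branches of $\gamma$ at $p$ leave on the $B$ side, since entering the cusp region would make $\gamma$ tangent to $u$ or $v$), or misses $p$, in which case $\gamma$ has an essential arc in each component. Either way, a wiggly core curve $\delta$ of $B$ is disjoint from $u$ and $v$, hence lies in your $\ann_{\leq1}(u,v)$, and meets that arc of $\gamma$ three or more times. So no admissible $\gamma$ exists even when $(u,v)$ is a pants pair: your criterion is identically false on non-torus adjacencies and distinguishes nothing.

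The refinement you gesture at --- cutting $\ann_{\leq1}(u,v)$ down to the curves in a single complementary region --- is not available by the route used in genus at least two: Lemma~\ref{lemma:annularmain} detects containment in the annulus via separating curves, and $T$ has no essential separating curves; moreover, for a disjoint homotopic pair the two complementary annuli are interchanged by a homeomorphism preserving $u$ and $v$, so at best the unordered partition of $\link^{\text{hom}}(u,v)$ into the two sides is combinatorial. That observation is essentially the paper's actual proof: one shows $(u,v)$ is a pants pair if and only if $\link^{\text{hom}}(u,v)$ is a join. In the pants case every essential curve in the pinched-disk component passes through $p$, so it meets each curve of the other component in at most that one point, giving a join; in the disjoint case one exhibits, within each annulus, wiggly curves ruling out a finer partition, and then a pair of curves, one in each annulus, meeting $u$ and $v$ at common points and hence meeting each other twice. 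Any repair of your criterion would need a statement of roughly this strength, so I recommend replacing the transversal-curve test with the join criterion.
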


\begin{proof}
    Proposition~\ref{sec:torustorusvselse} ascertains that automorphisms preserve the set of pants and disjoint pairs. Without loss of generality, we may assume that $(u,v)$ is a pants or disjoint pair.

    By Proposition~\ref{prop:torustorusvselse}, we can distinguish whether adjacent curves on a torus are homotopic, which is precisely when they are not a torus pair. Define the \textit{homotopic link} of $(u,v)$, denoted by $\link^{\text{hom}}(u,v)$, to be the subgraph of $\link(u,v)$ induced by curves homotopic to $u$ and $v.$ We will prove the theorem by showing that $(u,v)$ is a pants pair if and only if $\link^{\text{hom}}(u,v)$ is a join.

    \pit{Suppose $(u,v)$ is a pants pair} Then, $u$ and $v$ bound a pinched annulus. Any curve in $\link^{\text{hom}}(u,v)$ is contained in this pinched annulus or in its complement (possibly intersecting $u$ and $v$). Let $w\in \link^{\text{hom}}(u,v)$ be contained in the pinched annulus. Then, $w$ must intersect both $u$ and $v$ at $u\cap v,$ and thus does not intersect them elsewhere. Let $x\in \link^{\text{hom}}(u,v)$ be contained in the complement of the pinched annulus. If $x\cap (u\cup v)$ is nonempty, then $|w\cap x|=1$ if $x\cap u\cap v\neq \emptyset$ and $|w\cap x|=0$ otherwise. We conclude that $x$ and $w$ are adjacent in $\link^{\text{hom}}(u,v),$ which is therefore a join with parts defined by the subsurfaces of $T$ bounded by $u$ and $v$.

    \pit{Suppose $(u,v)$ is a disjoint pair} We will show that $\link^{\text{hom}}(u,v)$ is not a join by showing that no possible partition exists. We have that $u$ and $v$ bound two annuli with boundary, $A$ and $B,$ and every curve in $\link^{\text{hom}}(u,v)$ is contained in precisely one of the two annuli.

    First, we claim that curves contained in the same annulus cannot be in different parts of the partition. Let $a_1,a_2\in \link^{\text{hom}}(u,v)$ be contained in $A.$ Then, there exists a curve $a_3\in\link^{\text{hom}}(u,v)$ contained in $A$ that intersects $a_1$ and $a_2$ at least two times each, and is therefore adjacent to neither.

    Thus, the only possible partition for a join is into two parts, each corresponding to curves contained in one of $A$ or $B.$ However, there is a curve $a$ contained in $A$ that intersects each of $u$ and $v$ once, and a curve $b$ contained in $B$ that intersects $u$ and $v$ at $a\cap u$ and $a\cap v,$ respectively. Therefore, $a$ and $b$ are not adjacent.

    We conclude that there is no possible partition of $\link^{\text{hom}}(u,v)$ into a join.
\end{proof} 

Proposition~\ref{toruspantsvsdisjoint} allows us to combinatorially distinguish between pants pairs and disjoint pairs. With that in hand, we are ready to prove Proposition~\ref{toruscase}.

\subsection{\boldmath{$\Homeo(T) \cong \Aut \onefine(T) $}}

\begin{proof}[Proof of Proposition~\ref{toruscase}]
    We first note that any homeomorphism of $T$ induces an element of $\Aut \onefine(T).$  It remains to show that an element of $\Aut\onefine(T)$ induces a homeomorphism of $S.$
    
    We will reduce this claim to the theorem of Le~Roux--Wolff \cite{transaut} by showing that any automorphism of $\onefine(T)$ induces an automorphism of $\onetrans(T)$. It suffices to show that for any $\varphi \in \Aut\onefine(T)$ and any adjacent curves $u$ and $ v$ in $\onefine(T)$, we have
    \begin{displaymath}
    (u,v) \text{ disjoint } \iff (\varphi(u), \varphi(v)) \text{ disjoint.}
    \end{displaymath}

    In particular, we must show that $(u,v)$ is a pants pair if and only if $(\varphi(u), \varphi(v))$ is a pants pair. By Proposition~\ref{prop:torustorusvselse}, torus pairs are preserved by automorphisms, and by Proposition~\ref{toruspantsvsdisjoint}, pants pairs are distinguishable from disjoint pairs. 

    We therefore have that $\varphi$ induces an element of $\Aut\onetrans(T).$ We now invoke Le~Roux--Wolff \cite[Theorem 1.1]{transaut} that $\Homeo(S_g)\cong \Aut\onetrans(S_g)$ to complete the proof.
\end{proof}

\section{Proof of the main theorem}\label{mainsection}

\begin{proof}[Proof of Theorem \ref{maintheorem}]  There are two cases, depending on the genus $g$. We begin by showing that automorphisms of $\onefine(S_g)$ induce automorphisms of $\onetrans(T)$ (if $g=1)$ or $\fine(S_g)$ (if $g\geq 2$).
    
    \pit{Case 1: $g=1$} This is Proposition~\ref{toruscase}.
    
    \pit{Case 2: $g\geq2$} We observe that homeomorphisms preserve intersection number, so any homeomorphism of $S_g$ induces an automorphism of $\onefine(S_g).$ 
    
    It remains to show that an automorphism of $\onefine(S_g)$ induces a homeomorphism of $S$. We reduce this claim to the result of Long--Margalit--Pham--Verberne--Yao that $\Aut\fine(S_g)\cong \Homeo(S_g)$ via the natural isomorphism. To do this, we show that any automorphism of $\onefine(S_g)$ sends all pairs of vertices corresponding to once-intersecting curves to pairs of curves corresponding to once-intersecting curves.  

    By Proposition~\ref{toruspairaut}, automorphisms preserve torus pairs, and by Proposition~\ref{allpants}, automorphisms preserve pants pairs.  Since an edge can only correspond to a torus pair, a pants pair, or a pair of disjoint curves,  we conclude that any $\varphi \in \Aut\onefine(S_g)$ induces an automorphism of the image of the natural inclusion $\fine(S_g) \hookrightarrow \onefine(S_g)$. We apply the theorem of Long--Margalit--Pham--Verberne--Yao to prove that an automorphism of $\onefine(S_g)$ naturally induces a homeomorphism of $S_g$.

    \medskip\noindent It remains to show that the maps we construct are indeed the inverses of $\Phi.$ For the sake of clarity, we name the maps we use. Let $G$ be $\fine(S_g)$ if $g\geq 2$ and $\onetrans(T)$ if $g=1.$ Let $\Psi_1:\Aut \onefine(S_g)\to \Aut G$ be the map such that $\Psi_1(f)=\overline{f}$ is the automorphism induced by $f.$ We note that $f$ and $\overline{f}$ act the same way on the vertex sets of their corresponding graphs. Let $\Psi_2:\Aut G\to \Homeo (S_g)$ be the map constructed by Le~Roux--Wolff \cite{transaut} (if $g=1$) or Long--Margalit--Pham--Verberne--Yao \cite{fineaut} (if $g\geq 2$). Let $\varphi\in \Homeo(S_g).$ Then,
    \begin{align*}
        \Psi_2\circ \Psi_1\circ \Phi(\varphi) &= \Psi_2\circ \Psi_1(f_{\varphi}), \textrm{ where } f_{\varphi} \textrm{ permutes vertices as prescribed by }\varphi\\
        &= \Phi_2(\overline{f_{\varphi}})\\
        &= \Psi_2(\Psi_2^{-1}(\varphi))\\
        &= \varphi.
    \end{align*}
    Conversely, let $f\in \Aut \onefine(S_g).$ Then,
    \begin{align*}
        \Phi\circ\Psi_2\circ \Psi_1(f) &= \Phi \circ \Psi_2(\overline{f})\\
        &=\Phi(\varphi_{\overline{f}}), \textrm{ where } \varphi_{\overline{f}} \textrm{ permutes curves as prescribed by } \overline{f}\\
        &=\Phi (\varphi_f), \textrm{ since } f \textrm{ and } \overline{f} \textrm{ permute vertices in the same way}\\
        &=f.
    \end{align*}
We conclude that the natural map $\Aut G\to \Homeo(S_g)$ is an isomorphism.
\end{proof}

\bibliographystyle{amsalpha}
\bibliography{mainbib}

\end{document}